\documentclass[10pt]{amsart}
\usepackage{latexsym}
\usepackage{amsmath}
\usepackage{amssymb}
\usepackage{mathrsfs}
\usepackage{graphicx}
\usepackage{color}
\usepackage{pgfpages}
\usepackage{ifthen}
\usepackage{leftidx,tensor}
\usepackage[T1]{fontenc}
\usepackage[latin1]{inputenc}
\usepackage{mathtools}
\usepackage{comment}
\usepackage{dsfont}
\usepackage[nocompress]{cite}

\usepackage[shortlabels]{enumitem}
\usepackage{aliascnt}
\usepackage[bookmarks=true,pdfstartview=FitH, pdfborder={0 0 0}, colorlinks=true,citecolor=red, linkcolor=blue]{hyperref}
\usepackage{bbm}
\usepackage{nicefrac}
\usepackage{tikz}
\usetikzlibrary{arrows.meta,bending}
\usepackage{float}

\theoremstyle{plain}
\newtheorem{thm}{Theorem}[section]
\newaliascnt{cor}{thm}
\newaliascnt{prop}{thm}
\newaliascnt{lem}{thm}
\newtheorem{cor}[cor]{Corollary}
\newtheorem{prop}[prop]{Proposition}
\newtheorem{lem}[lem]{Lemma}
\aliascntresetthe{cor}
\aliascntresetthe{prop}
\aliascntresetthe{lem} 
\theoremstyle{definition}
\newaliascnt{defn}{thm}
\newaliascnt{asu}{thm}
\newaliascnt{con}{thm}
\aliascntresetthe{defn}
\aliascntresetthe{asu}
\aliascntresetthe{con}
\newcounter{stp}
\newcounter{stpi}
\newcounter{stpci}
\newcounter{stpiii}

\theoremstyle{definition}
\newaliascnt{rem}{thm}
\newaliascnt{exa}{thm}
\newaliascnt{masu}{thm}
\newaliascnt{nota}{thm}
\newaliascnt{sett}{thm}
\newtheorem{rem}[rem]{Remark}

\aliascntresetthe{rem}
\aliascntresetthe{exa}
\aliascntresetthe{masu}
\aliascntresetthe{nota}
\aliascntresetthe{sett}
\numberwithin{equation}{section}

\setlist[enumerate]{font = \normalfont}

\renewcommand{\d}{\mathrm{d}}
\newcommand{\R}{\mathbb{R}}
\newcommand{\C}{\mathbb{C}}

\DeclareMathOperator{\re}{Re}

\DeclareMathOperator{\Id}{Id}

\DeclareMathOperator{\diag}{diag}

\newcommand{\tand}{\enspace \text{and} \enspace}

\newcommand{\A}{\mathrm{A}}

\newcommand{\bS}{\bold{S}}
\newcommand{\bB}{\bold{B}}
\newcommand{\bD}{\bold{D}}
\newcommand{\bQ}{\bold{Q}}
\newcommand{\bA}{\bold{A}}
\newcommand{\bF}{\bold{F}}
\newcommand{\bw}{\bold{w}}
\newcommand{\bAn}{\bA_0}
\newcommand{\bBn}{\bB_0}
\newcommand{\bDn}{\bD_0}

\newcommand{\Lm}{\L_{\mathrm{m}}}
\newcommand{\Gm}{\mathrm{G}_{\mathrm{m}}}
\newcommand{\rL}{\mathrm{L}}
\newcommand{\rH}{\mathrm{H}}
\newcommand{\rB}{\mathrm{B}}
\newcommand{\rW}{\mathrm{W}}

\newcommand{\mc}{\mathcal}
\renewcommand{\L}{\mathcal{L}}
\newcommand{\Deltam}{\Delta_{\mathrm{m}}}
\newcommand{\Deltan}{\Delta_0}
\renewcommand{\div}{\mathrm{div} \, }
\newcommand{\Am}{\A_{\mathrm{m}}}

\newcommand{\bBm}{\bB_{\mathrm{m}}}
\newcommand{\bDm}{\bD_{\mathrm{m}}}
\newcommand{\D}{\operatorname{div}}
\newcommand{\rC}{\mathrm{C}}

\renewcommand{\leq}{\leqslant}
\renewcommand{\geq}{\geqslant}

\newcommand{\An}{\A_0}
\newcommand{\Ln}{\L_0}
\newcommand{\G}{\mathrm{G}}
\newcommand{\Gn}{\G_0}

\newcommand{\E}{\mathbb{E}}

\newcommand{\sigmapdelta}{\sigma_{\mathrm{p}}^{\delta}}

\newcommand{\sigmaf}{\sigma_{\mathrm{f}}}
\newcommand{\up}{u_{\mathrm{p}}}
\newcommand{\pp}{p_{\mathrm{p}}}
\newcommand{\vp}{v_{\mathrm{p}}}
\newcommand{\uf}{u_{\mathrm{f}}}
\newcommand{\pf}{\pi_{\mathrm{f}}}
\newcommand{\mup}{\mu_{\mathrm{p}}}
\newcommand{\lambdap}{\lambda_{\mathrm{p}}}
\newcommand{\Omegap}{\Omega_{\mathrm{p}}}
\newcommand{\Omegaf}{\Omega_{\mathrm{f}}}
\newcommand{\Gammap}{\Gamma_{\mathrm{p}}}
\newcommand{\Gammaf}{\Gamma_{\mathrm{f}}}
\newcommand{\bX}{\bold{X}}
\newcommand{\Xn}{\bX_0}
\newcommand{\Xe}{\bX_1}

\newcommand{\bY}{\bold{Y}}
\newcommand{\Yn}{\bY_0}
\newcommand{\Ye}{\bY_1}

\newcommand{\bZ}{\bold{Z}}
\newcommand{\Zn}{\bZ_0}
\newcommand{\Ze}{\bZ_1}

\begin{document}

	\title{Strong well-posedness of a fluid--poro\-viscoelastic interaction problem: An approach by Spectral analysis}
	
	
	\keywords{viscoelastic Biot system, fluid-viscoelastic interaction, strong well-posedness} 
	
	\subjclass{35Q35, 76D05, 76S05, 74F10}
	
	\author{Tim Binz} 
	\address{T. Binz, 
		Program in Applied \& Computational Mathematics (PACM)
		Princeton University, 
		Fine Hall, Washington Road, 08544 Princeton, NJ, USA.}
	\email {tb7523@princeton.edu}
	
	\author{Matthias Hieber} 
	\address{M. Hieber, 
		Fachbereich Mathematik,
		Technische Universit\"{a}t Darmstadt,
		Schlo\ss{}gartenstra{\ss}e 7, 64289 Darmstadt, Germany.}
	\email {hieber@mathematik.tu-darmstadt.de}

	\author{Arnab Roy}
	\address{A. Roy, Basque Center for Applied Mathematics (BCAM), Alameda de Mazarredo 14, 48009 Bilbao, Spain.}
	\address{IKERBASQUE, Basque Foundation for Science, Plaza Euskadi 5, 48009 Bilbao, Bizkaia, Spain.}
	\email{aroy@bcamath.org}

	

	\begin{abstract}
This article investigates a coupled viscoelastic Navier--Stokes--Biot system describing
the interaction between an incompressible viscous fluid and a poro--viscoelastic
medium in three spatial dimensions. The coupling between the fluid and the porous medium is realized through
Beavers--Joseph--Saffman type interface conditions. Using spectral analysis, it is proved that the coupled system admits a unique, strong, global  solution for small initial data. In addition, a Serrin--type blow-up criterion is established. 

	\end{abstract}
	
	\maketitle
	
\section{Introduction}


In this work, we investigate a three-dimensional fluid--structure interaction problem describing the coupling between an incompressible viscous fluid and a poro--viscoelastic medium. The fluid motion is governed by the Navier--Stokes equations, while the porous structure is modeled by the Biot system. In the two-dimensional setting, the existence of weak solutions for a related coupled system with a moving interface governed by a reticular plate equation was established in \cite{KCM:24}. More recently, a fully averaged poroelastic Kirchhoff plate model coupled with the linear Stokes equations was studied in \cite{brandt2026}, where the authors proved the existence of weak solutions as well as the existence and uniqueness of strong solutions to a regularized problem with the structure located on the boundary.

The main objective of the present work is to establish the {\em existence and uniqueness of strong solutions} for a three-dimensional fluid--structure interaction system coupling the Navier--Stokes equations with a general poro--viscoelastic Biot model. In contrast to the aforementioned works, the porous medium is fully surrounded by the fluid, and the coupling is imposed across fixed interfaces. Under a suitable smallness assumption on the initial data, we prove the global-in-time existence and uniqueness of strong solutions. 

A widely used description of the porous solid is provided by the Biot model \cite{Bio:41}, which couples the fluid flow in a saturated porous medium
with the elastic response of the solid skeleton and has been extensively studied analytically \cite{JLA:80, Sho:00}.
Motivated both by physical considerations and mathematical structure, particularly in applications involving biological tissues, it is natural to incorporate viscoelastic effects of Kelvin-Voigt type into the elastic stress. Such poro-viscoelastic models already appear in
Biot's work \cite{Bio:56} and have been further developed in the literature \cite{BCMW:21,BGSW:16,BMW:23}. Note that  the total stress tensor in the constitutive law for the porous solid is given by
\begin{equation*}
		\sigmapdelta(\up,\pp)=\lambdap \varepsilon\left(\up+\delta \cdot \partial_t \up\right) + 2\mup\varepsilon\left(\up+\delta \cdot\partial_t \up\right)-\alpha \pp\mathbb{I},
	\end{equation*}
where $\varepsilon(u)=\tfrac12(\nabla u+\nabla u^{\top})$ denotes the linearized strain
tensor and $\lambdap,\mup>0$ are the Lam\'e coefficients of elasticity, see \cite{BGSW:16}. The parameter $\delta\ge 0$ plays a central role in the model.  When $\delta=0$, the stress depends only on the instantaneous strain and
the system reduces to the classical dynamic \emph{poroelastic} Biot system. In contrast, for $\delta>0$ the stress additionally depends on the strain rate. The presence of $\delta>0$ has a profound impact on the spectral
properties of the associated linearized operator.

The coupling between the fluid and the porous solid is imposed on the interface
 via transmission conditions
expressing continuity of the normal fluid flux, balance of normal stresses, and tangential slip of
Beavers-Joseph-Saffman type (see Section \ref{sec2} for details).

The mathematical analysis of fluid--poroelastic interaction problems has so far been largely confined to weak solution theories. The author in
\cite{Sho:05} established the existence of weak solutions for the poroelastic Biot system coupled with a linearized Stokes flow. Subsequent
contribution \cite{BCMW:21} addressed interaction problems involving either dynamic linear Biot models or quasi-static nonlinear poroelastic Biot models
coupled with linear Stokes equations, again within a weak solution setting. A fully nonlinear coupling with the three-dimensional
Navier-Stokes equations was later considered in \cite{Ces:17}, the author proved the existence of weak solutions. Recently, the existence of a
weak solution to Navier-Stokes-poro(visco)elastic media interaction has been analyzed in \cite{KCM:24, BCM:25, KCM:25}. For further results, we refer to \cite{AGW:24}. 

Despite this
substantial body of work, uniqueness of weak solutions for fluid-Biot interaction models in the poro(visco)elastic situation remains open.

From a computational viewpoint, the interaction of fluids with poroelastic and viscoelastic materials has been extensively studied in the numerical literature \cite{BQQ:09,MWW:15,BYZ:15,AKYZ:18}.

It seems that there are no results addressing uniqueness and strong solutions in the nonlinear situation subject to  viscoelastic effects and
Beavers-Joseph-Saffman interface conditions.

The present article fills this gap. Our main result \autoref{thm:main} proves local-in-time existence and uniqueness of strong solutions
for arbitrary large initial data, as well as global-in-time existence and uniqueness for sufficiently small data. The solutions constructed satisfy the
interface conditions, including the Beavers-Joseph-Saffman condition, in a strong trace sense. We obtain the strong well-posedness even in
critical Besov spaces that are invariant under the natural scaling of the Navier--Stokes equations.
Within this framework, we also derive Serrin-type blow-up criteria in \autoref{cor:bjs blow up}.

We briefly comment about our approach. The analysis is challenging due to the simultaneous presence of nonlinear fluid effects,
mixed hyperbolic-parabolic behavior in the solid, and nonstandard interface coupling. A key ingredient is the spectral behaviour of the coupled linearized
fluid-porous media operator. In fact, due to the hyperbolic nature of the Biot dynamics, the resolvent of this operator cannot be  compact, and the
spectrum may contain an essential part. This precludes the use of standard compactness-based spectral arguments. 

A key observation of this work is that the presence of viscoelastic damping allows us to prove that the spectral bound of the
linearized operator is strictly negative, despite the lack of compactness and the possible presence of essential spectrum.
Combining this fact with regularity properties of the linearized system allows then to obtain our strong well-posedness result.

A main difficulty here is to locate
the essential spectrum. This is mainly done by the observation that the spectrum of the linearized operator is
closely related to the spectrum of an associated operator $C=M((-L)^{1/2})$, where $M(\alpha)$ is the $2\times 2$-matrix given by 
$			M(\alpha) := 
			\begin{pmatrix}
				-\delta^{-1} & \alpha \\
				- \delta^{-2} \alpha^{-1} & -\delta \alpha^2 + \delta^{-1}
			\end{pmatrix} $,  $L$ denotes the Lam\'e operator and $\delta>0$ is the visco-elastic parameter as above.
Then, surprisingly, by a functional calculus and spectral mapping type approach, spectral properties of $C$ can be deduced elegantly from the ones of $M$, which are of course accessible more easily.


The article is organized as follows. In \autoref{sec2} we introduce the coupled fluid-poro-viscoelastic model. Basic notation and preliminary results are collected in \autoref{sec:preliminaries}. The main well-posedness results, including the blow-up criteria, are stated in \autoref{sec:main}.
In \autoref{sec:visco} the coupled Navier-Stokes-viscoelastic system is reformulated as a semilinear evolution equation. The linearization is analyzed in \autoref{sec:representation}, where it is shown via a suitable similarity transformation and boundary perturbation arguments that it can be reduced to a simpler operator matrix. This representation is then used to characterize its interpolation spaces.
The spectral properties of the linearized operator are studied in \autoref{sec:spectral theory}, where we prove that its spectral bound is strictly negative. Building on these results, unshifted maximal regularity is established in \autoref{sec:mr}, which forms the key ingredient for global strong well-posedness for small initial data.
Finally, in \autoref{sec:wellposedness}, the maximal regularity framework is combined with classical bilinear estimates for the Navier-Stokes nonlinearity and a fixed-point argument to prove the main theorem and the stated blow-up criteria.
	
		\section{Description of the model}\label{sec2}

The Biot model provides a macroscopic description of a porous elastic skeleton
saturated by a viscous fluid.
In its dynamic formulation, the model couples the balance of linear momentum for
the solid matrix with a diffusion equation for the pore pressure.
Denoting by $\up$ the displacement of the porous elastic skeleton and by $\pp$ the pore pressure,
the Biot system reads 
\begin{equation} \label{eq:biot} \tag{Biot}
		\left\{
		\begin{aligned}
			\partial_{tt} \up-\D\sigmapdelta(\up,\pp) &=0
			&&\text{ in } (0,T)\times\Omegap,\\
			\partial_t (\pp+\alpha\,\div\up) -k\Delta \pp&=0 &&\text{ in } (0,T)\times\Omegap ,
		\end{aligned}
		\right.
	\end{equation}
Here $\Omega_p\subset\mathbb{R}^3$ denotes the region occupied by the porous medium,
$\alpha>0$ is the Biot--Willis coefficient measuring the coupling between volumetric
strain and pore pressure, and $k>0$ is the permeability parameter. The total stress tensor in the porous solid is given by
\begin{equation*}
		\sigmapdelta(\up,\pp)=\lambdap \varepsilon\bigg(\up+\delta \cdot \partial_t \up\bigg) + 2\mup\varepsilon\bigg(\up+\delta \cdot\partial_t \up\bigg)-\alpha \pp\mathbb{I},
	\end{equation*}
where $\varepsilon(u)=\tfrac12(\nabla u+\nabla u^{\top})$ denotes the linearized strain
tensor and $\lambdap,\mup>0$ are the Lam\'e coefficients of elasticity. The parameter $\delta\ge 0$ determines the qualitative nature of the solid response.
When $\delta=0$, we arrive at the classical dynamic \emph{poroelastic} Biot system.
In contrast, $\delta>0$ captures \emph{viscoelastic} effects and can be interpreted as a
Kelvin-Voigt type regularization of the elastic operator.

Although the displacement equation exhibits parabolic-like dissipation due to
Kelvin-Voigt damping, it remains a second-order hyperbolic equation, whereas the
pressure equation is purely parabolic. In particular, the natural regularity for $\up$ is limited to $\rH^{1,q}(\Omegap)$, even in the presence of damping. This limited regularity plays a decisive role in the analysis of the coupled problem, especially in the presence of interface conditions.

The porous medium is surrounded by a fluid region. The dynamics of the fluid are coupled to the porous medium through transmission conditions at the interface. The fluid occupies a bounded domain $\Omegaf\subset\mathbb{R}^3$ and is governed by
the incompressible Navier-Stokes equations for the fluid  velocity $\uf$ and pressure $\pf$:
\begin{equation} \label{eq:nse} \tag{NSE}
		\left\{
		\begin{aligned}
			\partial_t \uf-\D\sigmaf(\uf,\pf)+\uf \cdot \nabla \uf&=0 &&\text{ in } (0,T)\times\Omegaf,\\
			\operatorname{div}\uf&=0 &&\text{ in } (0,T)\times\Omegaf ,
		\end{aligned}
		\right.
	\end{equation} 
	Here $\sigmaf(\uf,\pf)= 2\varepsilon(\uf)- \pf\mathbb{I}$ is the Cauchy stress tensor. The most complicated part in the coupled poro-viscoelastic-fluid model is the interaction at the interface between the porous material and fluid region. The coupling between the fluid and the porous solid is imposed on the interface
$\Gamma=\overline{\Omegap}\cap\overline{\Omegaf}$ via transmission conditions
expressing continuity of the normal fluid flux, balance of normal stresses, and tangential slip of
Beavers-Joseph-Saffman type:
\begin{equation} \label{eq:bjs} \tag{ICC}
		\left\{
		\begin{aligned}
			\uf\cdot {n}_{\Gamma}&=\left(\partial_{t} \up-k\nabla \pp\right)\cdot {n}_{\Gamma},\\
			\sigmaf{n}_{\Gamma}&=\sigma_p^{\delta}{n}_{\Gamma},\\
			{n}_{\Gamma}\cdot\sigmaf{n}_{\Gamma}&=-\pp,\\
			\sum_{j=1}^2{t}^{j}_{\Gamma}\cdot\sigmaf{n}_{\Gamma}&=-\sum_{j=1}^2\beta(\uf-\partial_{t} \up)\cdot {t}^{j}_{\Gamma},
		\end{aligned}
		\right.
	\end{equation}
	where $\beta>0$ is the \emph{resistance parameter} in the tangential direction. Here $n_\Gamma$ denotes the outward pointing normal at $\Gamma$ with respect to $\Omegap$ and $t_\Gamma^j$ two linear independent tangential vectors of $\Gamma$. The boundary conditions in \eqref{eq:bjs}$_4$ are the so called \emph{Beavers-Joseph-Saffman boundary conditions}, see \cite{BJ:67,Saf:71}. Roughly speaking it says that the tangential component of the stress of the fluid flow is proportional to the jump in the tangential velocity over the interface. For a rigorous derivation of the Beavers-Joseph-Saffman  condition based on stochastic homogenization, we refer to the work of J\"ager and Mikelic \cite{JM:00}.   From the functional--analytic point of view, these interface conditions lead to a
linearized operator with non--diagonal domain and prevent a decoupling of the fluid
and solid subproblems.
Moreover, they impose compatibility constraints which significantly complicates the spectral and
regularity analysis.
	
	The system \eqref{eq:biot}-\eqref{eq:nse}-\eqref{eq:bjs} is supplemented by additional boundary conditions on the non-interface part of the boundary as well as initial data. In order to make this precise, we first describe the geometry of the situation under consideration.
	The domains occupied by the viscoelastic material $\Omegap$ and by the fluid $\Omegaf$ are non-empty, bounded domains in $\R^3$ with smooth boundaries $\partial \Omegap$ and $\partial \Omegaf$, respectively. Moreover, we assume that there exists a non-empty bounded domain $\Omega_i \subset \Omegap$ with smooth boundary $\Gammap := \partial \Omega_i$.  
	Furthermore, we assume that the interface $\Gamma$ between the poro-viscoelastic material and the fluid does not touch the inner boundary $\Gammap$ and the outer boundary $\Gammaf := \partial \Omegaf \setminus \Gamma$.
	%
	
	\begin{figure}[H]\label{fig}
		\begin{tikzpicture}[>=Triangle,
			dot/.style={circle,fill=blue!70!black,inner sep=1.3pt}]
			
			\draw[blue,thick,fill=blue!30,xshift=1cm] plot[smooth cycle]
			coordinates {(10:2.5) (40:2.8) (100:2.5) (150:2.7)   (190:2.2)
				(310:1.8)};
				
			\draw[red,thick,fill=gray!50,xshift=1cm] plot[smooth cycle]
			coordinates {(10:1.5) (40:1.8) (100:1.5) (150:1.7)   (190:1.2)
				(310:1.3)};
				
			\draw[gray,thick,fill=white!100,xshift=1cm] plot[smooth cycle]
			coordinates {(10:0.5) (40:0.8) (100:0.7) (150:0.7)   (190:0.2)
				(310:0.1)};
			
			\node [align=left] at (1.5,-0.5) {$\Omegap$};
			\node [align=left] at (1,0.4) {$\Gammap$};
			\node [align=left] at (3.6,0) {$\Gammaf$};
			\node [align=left] at (-0.5,1) {$\Gamma$};
			\node [align=left] at (1.3,2) {$\Omegaf$};	
			\end{tikzpicture}
		\caption{
		Prototype Geometry}
	\end{figure}
	The boundary conditions at the inner and outer boundary are homogeneous Dirichlet boundary conditions 	
	\begin{equation} \label{eq:bdry}
		\uf=0\quad \mbox{on}\quad (0,T)\times\Gammaf, \quad \up=0 \tand  \pp=0 \quad \mbox{on}\quad (0,T)\times\Gammap.
	\end{equation}
	We emphasis that the Biot model is a coupled \emph{hyperbolic-parabolic system}. It is (damped) hyperbolic and second order in time with respect to the velocity $\up$ and parabolic with respect to the pressure $\pp$. Hence, the system \eqref{eq:biot}-\eqref{eq:nse}-\eqref{eq:bjs} is a coupled nonlinear hyperbolic-parabolic-parabolic system. 
	Therefore, we have to prescribe the following initial data 
	\begin{equation}\label{eq:initial}
		\up(0)={u}_0, \ \partial_{t} \up(0)={v}_0, \  \pp(0)=p_0, \tand \uf(0)={u}^{f}_0.
	\end{equation}
	
	\section{Functional analytic framework}
	\label{sec:preliminaries}
	
		We start by introducing time weighted Banach valued spaces $\rL^p_\mu(J;E)$. More precisely, for a Banach space $(E,\|\cdot\|_E)$, $J = (0,T)$, $0 < T \le \infty$, $p \in (1,\infty)$, $\mu \in (\frac{1}{p},1]$ as well 
	as $u \colon J \to E$, we denote by $t^{1-\mu} u$ the function $t \mapsto t^{1-\mu} u(t)$ on $J$. We then define
	\begin{equation*}
		\rL_\mu^p(J;E) := \{u \colon J \to E : t^{1-\mu} u \in \rL^p(J;E)\}.
	\end{equation*}
	The latter space becomes a Banach space when equipped with the norm
	\begin{equation*}
		\|u \|_{\rL_\mu^p(J;E)} := \big\| t^{1-\mu} u \big \|_{\rL^p(J;E)} = \Big(\int_J t^{p(1-\mu)} \| u(t) \|_E^p dt\Big)^{\frac{1}{p}}.
	\end{equation*}
	For $k \in \mathbb{N}_0$, the associated weighted Sobolev spaces are defined by
	\begin{equation*}
		\rW_\mu^{k,p}(J;E) = \rH_\mu^{k,p}(J;E) := \left\{u \in \rW_\mathrm{loc}^{k,1}(J;E) : u^{(j)} \in \rL_\mu^p(J;E), \enspace j \in \{0,\dots,k\}\right\},
	\end{equation*}
	and these spaces are equipped with the norms
	\begin{equation*}
		\| u \|_{\rW_\mu^{k,p}(J;E)} = \| u \|_{\rH_\mu^{k,p}(J;E)} := \Big(\sum_{j=0}^k \| u^{(j)} \|_{\rL_\mu^p(J;E)}^p \Big)^{\frac{1}{p}}.
	\end{equation*}
	The weighted fractional Sobolev spaces and Bessel potential spaces are then defined by interpolation.
	
	Let $X_0, X_1$ be two Banach spaces such that $X_1 \hookrightarrow X_0$ is densely embedded. We denote by $X_\beta := [X_0,X_1]_{\beta}$ the complex interpolation spaces for $\beta \in (0,1)$ and by 
	$X_{\theta,r} := (X_0,X_1)_{\theta,r}$ the real interpolation spaces for $\theta \in (0,1)$. Furthermore, the trace space is given by $		X_{\gamma,\mu} := X_{\mu-\frac{1}{p},p} .$
	Moreover, the maximal $\rL^p$-regularity space is defined as
	\begin{equation*}
		\E_{1,\mu}(J) := \rH^{1,p}_\mu(J;X_0) \cap \rL^p_\mu(J;X_1)  
	\end{equation*}
	and the data space as
	\begin{equation*}
		\E_{0,\mu}(J) := \rL^p(J;X_0).
	\end{equation*}
	We obtain the following embedding
	\begin{equation}
		\E_{1,\mu}(J) \hookrightarrow \mathrm{BUC}(J;X_{\gamma,\mu}) . \label{eq:embedding mr}
	\end{equation}
	
	\medskip 
	
		Let $\Omega \subset \R^3$ be a domain with boundary $\partial \Omega$ and $\Gamma \subset \partial \Omega$. 
	For $s > 0$ and $s \not \in \mathbb{N}$ we denote by $\rH^{s,q}(\Omega)$ the Bessel potential spaces, by $\rW^{s,q}(\Omega)$ the fractional Sobolev spaces and by $\rB_{q,r}^s(\Omega)$ the Besov spaces 
	for $r \in [1,\infty]$. For the definition of these spaces we refer e.g. to \cite{Tri:78} or \cite{Ama:19}. The analogous spaces with respect to homogeneous Dirichlet boundary conditions at $\Gamma$ are 
	defined as  
	
	\begin{equation*}
		\rH_{\Gamma}^{s,q}(\Omega) 
		:=
		\begin{cases}
			\{ v \in \rH^{s,q}(\Omega) \colon v|_{\Gamma} = 0 \}, &\text{ if } s > \frac{1}{q}, \\ 
			\{ v \in \rH^{1/q,q}(\R^3) \colon \mathrm{supp}(v) \subset \Omega \cup \Gamma \}, &\text{ if } s = \frac{1}{q}, \\
			\qquad \rH^{s,q}(\Omega) &\text{ if } s < \frac{1}{q}.
		\end{cases} 
	\end{equation*}
	and
	\begin{equation*}
		\rB_{q,r,\Gamma}^{s}(\Omega) 
		:=
		\begin{cases}
			\{ v \in \rB_{q,r}^{s}(\Omega) \colon v|_{\Gamma} = 0 \}, &\text{ if } s > \frac{1}{q}, \\
			\{ v \in \rB_{q,r}^{1/q}(\R^3) \colon \mathrm{supp}(v) \subset \Omega \cup \Gamma \}, &\text{ if } s = \frac{1}{q}, \\
			\qquad \rB_{q,r}^{s}(\Omega) &\text{ if } s < \frac{1}{q}. \\
		\end{cases} 
	\end{equation*}
	Note, that
	\begin{equation*}
		\rB^{\frac{1}{q}}_{q,q,\Gamma}(\Omega) = \rW^{\frac{1}{q},q}_{00,\Gamma}(\Omega),
	\end{equation*}
	denotes the $\rL^q$-type {\em Lions-Magenes space} on $\Omega$ with respect to $\Gamma \subset \partial \Omega$ given by all $v \in \rW^{\frac{1}{q},q}(\Omega)$ such that 
	\begin{equation*}
		\int_{\Omega} \rho(x)^{-1} | v(x) |^q \, \mathrm{d}x < \infty. 
	\end{equation*}
	Here $\rho$ denotes a smooth function comparable to the distance $\mathrm{dist}(\cdot,\Gamma)$ from $\Gamma$, equipped with the norm
	\begin{equation*}
		\| v \|_{\rW_{00,\Gamma}^{\frac{1}{q},q}}
		= \biggl(\| v \|_{\rW^{\frac{1}{q},q}}^q  
		+ \int_{\Omega} \frac{|v(x)|^q}{\mathrm{dist}(x,\Gamma)^2} \, \mathrm{d}x \biggr)^{\frac{1}{q}} .
	\end{equation*}
	For more information on these spaces we refer e.g. to \cite{See:72}.
	In particular, for $q = 2$ we obtain the usual Lions-Magenes space  $\rH_{00,\Gamma}^{\frac{1}{2}}(\Omega)$ on $\Omega$ with respect to $\Gamma$, see \cite{LM:72}. 
	
	Moreover, motivated by the scaling invariant critical spaces of the Navier-Stokes equations we consider the following critical Besov spaces with boundary conditions
	\begin{equation*}
		\rB_{q,r,\Gamma}^{\frac{3}{q}-1}(\Omega) 
		:=
		\begin{cases}
			\{ v \in \rB_{q,r}^{\frac{3}{q}-1}(\Omega) \colon v|_{\Gamma} = 0 \}, &\text{ if } q > 2, \\
			\qquad \rH^{\frac{1}{2}}_{00,\Gamma}(\Omega), &\text{ if } q = 2, \\
			\qquad \rB_{q,r}^{\frac{3}{q}-1}(\Omega) &\text{ if } q < 2 . \\
		\end{cases} 
	\end{equation*}
	Finally, we define the solenoidal $\rL^q$-vector fields by
	\begin{equation*}
		\rL^q_{\sigma}(\Omega) = \overline{\{ u \in \rC^\infty(\Omega;\R^3) \colon \div(u) = 0 \}}^{\| \cdot \|_{\rL^q}}
	\end{equation*}
	for $q \in (1,+\infty)$. We obtain that $\rL^q_{\sigma}(\Omega) \subset \rL^q(\Omega;\R^3)$ is a closed subspace. Further, we denote the Helmholtz projection by
	\begin{equation*}
		\mathcal{P} \colon \rL^q(\Omega;\R^3) \to \rL^q_{\sigma}(\Omega) .
	\end{equation*}
	
	\section{Statement of the Main results}
	\label{sec:main}
	
	Our main theorem is the strong well-posedness of the coupled Navier-Stokes-viscoelastic system \eqref{eq:biot}-\eqref{eq:nse}-\eqref{eq:bjs}. 
	
	\begin{thm}[Strong well-posedness]\label{thm:main}
	Suppose $r, q \in (1,\infty)$ such that $\frac{2}{3 r} + \frac{1}{q} \leq 1$ and let $\mu_0 := \frac{1}{2}+\frac{1}{2q}+\frac{1}{r}$. 
	\begin{enumerate}[(a)]
		\item 
		Let $q < 3$, $\mu_c := \frac{3}{2q}-\frac{1}{2}+\frac{1}{r}$, $\mu \in [\mu_c,1] \setminus \{\mu_0\}$ and $({u}_0, {v}_0, p_0, {u}_0^f) \in \rH^{1,q}_{\Gammap}(\Omegap) \times \rB_{q,r,\Gammap}^{2\mu-\frac{2}{r}}(\Omegap) \times
		\rB_{q,r,\Gammap}^{2\mu-\frac{2}{r}}(\Omegap)\times
		\rB_{q,r,\Gammaf,\sigma}^{2\mu-\frac{2}{r}}(\Omegaf)$ for $\mu > \mu_0$.
		Then there exists $T>0$ such that the system \eqref{eq:biot}-\eqref{eq:nse}-\eqref{eq:bjs}, supplemented by \eqref{eq:bdry} and \eqref{eq:initial}, admits a unique, strong solution 
		\begin{align*}
			\up &\in 
			\rH^{1,r}_{\mu}(0,T; \rH^{1,q}(\Omegap)), \\
			\up+\delta \partial_t \up &\in \rL^r_{\mu}(0,T; \rH^{2,q}(\Omegap)) \cap \rH^{1,r}_{\mu}(0,T; \rL^q(\Omegap))
			\cap \mathrm{BUC}([0,T), \rB^{{2\mu-\frac{2}{r}}}_{q,r,\Gammap}(\Omegap)),\\
			\pp &\in \rL^r_{\mu}(0,T; \rH^{2,q}(\Omegap)) \cap \rH^{1,r}_{\mu}(0,T; \rL^q(\Omegap))
			\cap \mathrm{BUC}([0,T), \rB^{{2\mu-\frac{2}{r}}}_{q,r,\Gammap}(\Omegap)),\\
			\uf &\in 
			\rL^r_{\mu}(0,T; \rH^{2,q}(\Omegaf)) \cap \rH^{1,r}_{\mu}(0,T; \rL^q_{\sigma}(\Omegaf))
			\cap \mathrm{BUC}([0,T), \rB^{{2\mu-\frac{2}{r}}}_{q,r,\Gammaf,\sigma}(\Omegaf)), 
		\end{align*}
		Further, there exists $r_0 > 0$ such that for initial data $({u}_0, {v}_0, p_0, {u}_0^f)$ with 
		\begin{equation*} 
			\|{u}_0\|_{\rH^{1,q}_{\Gammap}(\Omegap)} + \|{v}_0\|_{\rB_{q,r,\Gammap}^{2\mu-\frac{2}{r}}(\Omegap)} + \| p_0 \|_{\rB_{q,r}^{2\mu-\frac{2}{r}}(\Omegap)} + \| u^{f}_0 \|_{\rB_{q,r}^{2\mu-\frac{2}{r}}(\Omegaf)} < r_0,
		\end{equation*}
		the unique, strong solution exists globally. 
		\item  
		Let $q \geq 3$, $\mu \in (\frac{1}{r},1]\setminus \{\mu_0\}$ and $({u}_0, {v}_0, p_0, {u}_0^f) \in \rH^{1,q}_{\Gammap}(\Omegap) \times \rB_{q,r,\Gammap}^{2\mu-\frac{2}{r}}(\Omegap) \times
		\rB_{q,r,\Gammap}^{2\mu-\frac{2}{r}}(\Omegap)\times \rB_{q,r,\Gammaf,\sigma}^{2\mu-\frac{2}{r}}(\Omegaf)$ satisfying \eqref{eq:bjs}  for $\mu > \mu_0$.
		Then there exists $T>0$ such that the system \eqref{eq:biot}-\eqref{eq:nse}-\eqref{eq:bjs}, supplemented by \eqref{eq:bdry} and \eqref{eq:initial}, admits a unique, strong solution 
		\begin{align*}
			\up &\in 
			\rH^{1,r}_{\mu}(0,T; \rH^{1,q}(\Omegap)), \\
			\up+\delta \partial_t \up &\in \rL^r_{\mu}(0,T; \rH^{2,q}(\Omegap)) \cap \rH^{1,r}_{\mu}(0,T; \rL^q(\Omegap))
			\cap \mathrm{BUC}([0,T), \rB^{{2\mu-\frac{2}{r}}}_{q,r,\Gammap}(\Omegap)),\\
			\pp &\in \rL^r_{\mu}(0,T; \rH^{2,q}(\Omegap)) \cap \rH^{1,r}_{\mu}(0,T; \rL^q(\Omegap))
			\cap \mathrm{BUC}([0,T), \rB^{{2\mu-\frac{2}{r}}}_{q,r,\Gammap}(\Omegap)),\\
			\uf &\in 
			\rL^r_{\mu}(0,T; \rH^{2,q}(\Omegaf)) \cap \rH^{1,r}_{\mu}(0,T; \rL^q_{\sigma}(\Omegaf))
			\cap \mathrm{BUC}([0,T), \rB^{{2\mu-\frac{2}{r}}}_{q,r,\Gammaf,\sigma}(\Omegaf)), 
		\end{align*}
		Further, there exists $r_0 > 0$ such that for initial data $({u}_0, {v}_0, p_0, {u}_0^f)$ with 
		\begin{equation*} 
			\|{u}_0\|_{\rH^{1,q}_{\Gammap}(\Omegap)} + \|{v}_0\|_{\rB_{q,r,\Gammap}^{2\mu-\frac{2}{r}}(\Omegap)} + \| p_0 \|_{\rB_{q,r,\Gammap}^{2\mu-\frac{2}{r}}(\Omegap)} + \| u^{f}_0 \|_{\rB_{q,r,\Gammaf}^{2\mu-\frac{2}{r}}(\Omegaf)} < r_0,
		\end{equation*}
		the unique, strong solution exists globally.  
	\end{enumerate}
	\end{thm}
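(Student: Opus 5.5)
We prove \autoref{thm:main} by recasting the coupled system as a semilinear evolution equation $\partial_t z - \A z = F(z)$, $z(0)=z_0$, and then running a fixed-point argument in weighted maximal regularity spaces. The natural unknown is not the pair $(\up,\partial_t\up)$ but rather $w := \up + \delta\partial_t \up$ together with $\up$. In these variables the momentum equation reads
\[
\delta^{-1}\partial_t w - \D\bigl(\lambdap\varepsilon(w)\mathbb{I}\text{-part}+2\mup\varepsilon(w)\bigr) + \alpha\nabla\pp = \delta^{-1}\partial_t\up ,
\]
that is, $\delta^{-1}(\partial_t w - L w) + \alpha \nabla \pp = \delta^{-2}(w-\up)$, where $L$ is the Lam\'e operator. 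Meanwhile $\partial_t \up = \delta^{-1}(w-\up)$ is an ODE in $\rH^{1,q}$.

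The state is then $z=(\up,w,\pp,\uf)$ in
\[
\Xn = \rH^{1,q}_{\Gammap}(\Omegap)\times \rL^q(\Omegap)\times \rL^q(\Omegap)\times \rL^q_\sigma(\Omegaf).
\]
The domain $\Xe$ has $\rH^{2,q}$-regularity in the last three components and encodes \eqref{eq:bjs} and \eqref{eq:bdry}, with the fluid pressure $\pf$ eliminated through a Helmholtz-type projection adapted to the interface conditions. The only nonlinearity is $F(z)=(0,0,0,-\mathcal{P}(\uf\cdot\nabla\uf))$.

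The linear theory proceeds in three steps.
\begin{enumerate}[(i)]
\item \textbf{Reduction to a triangular operator.} By a similarity transformation together with boundary perturbation arguments, $\A$ is reduced to an operator matrix that is upper triangular up to lower-order and relatively bounded terms. Its diagonal consists of $-\delta^{-1}$ on the $\up$-component and a parabolic block (Lam\'e, heat, Stokes) carrying the coupling. Since this block has bounded $\mathcal{H}^\infty$-calculus, or is $\mathcal{R}$-sectorial, on $\rL^q$, we obtain maximal $\rL^r_\mu$-regularity of $\A-\omega$ on finite intervals. The same representation identifies the trace space $X_{\gamma,\mu}$ with $\rH^{1,q}_{\Gammap}\times\rB^{2\mu-2/r}_{q,r}$-spaces, including boundary conditions when $2\mu-2/r>1/q$. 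This is where the exceptional value $\mu_0$ and the compatibility requirement in (b) enter.
\item \textbf{Negative spectral bound.} We show $s(\A)<0$. The essential spectrum comes from the hyperbolic $\up$-part, and it is located through the matrix $M(\alpha)$: the relevant operator is $C=M((-L)^{1/2})$, and a spectral mapping theorem for the functional calculus of the self-adjoint positive operator $-L$ gives $\sigma(C)\subset\overline{\bigcup_{\alpha\in\sigma((-L)^{1/2})}\sigma(M(\alpha))}$. Computing $\operatorname{tr}M(\alpha)=-\delta\alpha^2<0$ and $\det M(\alpha)=\delta^{-1}\alpha^2>0$ for $\alpha>0$ shows that both eigenvalues have negative real part, uniformly bounded away from $0$ as $\alpha$ ranges over $\sigma((-L)^{1/2})\subset[c,\infty)$; as $\alpha\to\infty$ they tend to $-\delta^{-1}$ and $-\infty$. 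The remaining spectrum outside this set consists of eigenvalues. For these, an energy identity obtained by testing with $\bar z$ in the natural $\rL^2$ energy space rules out eigenvalues with $\re\lambda\ge0$. The dissipation terms $\delta\|\varepsilon(\partial_t\up)\|^2$, $k\|\nabla\pp\|^2$, $2\|\varepsilon(\uf)\|^2$ and $\beta\|(\uf-\partial_t\up)_\tau\|^2_\Gamma$, combined with Korn's inequality and the Dirichlet conditions, force $z=0$. Independence of the spectrum from $q$, via regularity and compact embeddings on the parabolic components, transfers this conclusion to $\rL^q$.
\item \textbf{Unshifted maximal regularity.} Since the semigroup is analytic modulo the bounded ODE part, $s(\A)<0$ gives exponential stability. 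Combined with (i), this yields maximal regularity on $J=(0,\infty)$ without shift.
\end{enumerate}

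For the nonlinear part, we use classical bilinear estimates for $\uf\cdot\nabla\uf$ in weighted spaces: $\|F(z_1)-F(z_2)\|_{\E_{0,\mu}}\le C(\|z_1\|_{\E_{1,\mu}}+\|z_2\|_{\E_{1,\mu}})\|z_1-z_2\|_{\E_{1,\mu}}$. This estimate holds whenever $\mu\ge\mu_c$ for $q<3$, and for all $\mu>1/r$ when $q\ge3$, under the hypothesis $\frac{2}{3r}+\frac1q\le1$. Local existence for large data then follows from the contraction principle on a small interval, with the reference solution $e^{t\A}z_0$ absorbing the size of the data. Global existence for small data follows from the same contraction on $(0,\infty)$, using the $T$-independent constant supplied by (iii). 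Uniqueness follows from the contraction as well.

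The main obstacle is step (ii), and specifically proving that the spectral mapping inclusion for $C$ indeed captures the entire essential spectrum of the full coupled operator, whose domain is non-diagonal because of \eqref{eq:bjs}. My first attempt would be to show that $\A$ differs from $\diag(C,\text{parabolic part})$ by an operator that is compact relative to it, exploiting the compactness of $\rH^{2,q}\hookrightarrow\rH^{1,q}$ on bounded domains. Weyl's theorem would then give invariance of the essential spectrum.
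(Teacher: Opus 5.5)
Your overall architecture matches the paper's. The variable $w=\up+\delta\partial_t\up$ is, up to a factor $\delta$, the similarity transform $\bS$. The Biot block is analysed through $C=M((-\Ln)^{1/2})$, eigenvalues are excluded by the energy identity, and the global result uses the $T$-independent maximal regularity constant coming from $s(\bA)<0$.

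The genuine gap is the step you flagged yourself, and the fix you propose does not work. The operator with the interface conditions and any $\diag(C,\dots)$ built from $\Ln$, $\Deltan$, $\An$ with homogeneous conditions are restrictions of the same maximal operator $\bBm$ to different domains. Their difference vanishes on the intersection of the domains. So $\bB$ is not a relatively compact additive perturbation of an operator with diagonal domain, and the Weyl/Kato invariance theorem does not apply. The paper uses that theorem only inside $\bBn$, for $\bQ_r$.

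Nor does the compact embedding $\rH^{2,q}\hookrightarrow\rH^{1,q}$ tame the coupling by itself. In \eqref{eq:bjs'} the Neumann data for the transformed velocity and for $\pp$ contain $\beta\delta^{-1}(\up\cdot t^l_\Gamma)t^l_\Gamma$ and $k^{-1}\delta^{-1}\up\cdot n_\Gamma$. For $\up\in\rH^{1,q}(\Omegap)$ these traces lie only in $\rW^{1-\frac1q,q}(\Gamma)$, the same space as $\varepsilon(w)n_\Gamma$ and $\partial_{n_\Gamma}\pp$. So they are not lower order for general elements of the domain.

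The missing ingredient is the one the paper stresses in the remark after \autoref{lem:L0}. One writes $\bB=(\bB_{-1}+(\lambda-\bB_{-1})\mathbf{L}^{\bB}_\lambda\Phi)|_{\Xn}$ as in \autoref{lem:representation}. For $\lambda\in\rho(\bBn)$ the question then becomes whether $1\in\sigma(\Phi\mathbf{L}^{\bB}_\lambda)$ on $\rW^{1-\frac1q,q}(\Gamma)^3$, and one proves $\Phi\mathbf{L}^{\bB}_\lambda$ compact. This works because on $\ker(\lambda-\Gm)$ the first equation forces $\up=(\lambda+\delta^{-1})^{-1}\vp\in\rH^{2,q}(\Omegap)$. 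Thus $\up$ gains a derivative on the $\lambda$-liftings, and likewise in the correction term $R(\lambda,\bBn)\bQ\mathbf{L}^{\bD}_\lambda$. Every trace entering $\Phi$ then lies in $\rW^{2-\frac1q,q}(\Gamma)\stackrel{c}{\hookrightarrow}\rW^{1-\frac1q,q}(\Gamma)$. The Adler--Engel result then gives $\sigma(\bB)\cap\rho(\bBn)=\sigma_p(\bB)\cap\rho(\bBn)$, and only at that point does your energy argument close the proof.

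If you want a Weyl-type argument, the right object is the resolvent difference $R(\lambda,\bB)-R(\lambda,\bBn)=\mathbf{L}^{\bB}_\lambda(1-\Phi\mathbf{L}^{\bB}_\lambda)^{-1}\Phi R(\lambda,\bBn)$. Its compactness on $\Xn$ rests on the same gain of regularity for $\up$. Without this step you have no proof of $s(\bA)<0$: local existence survives, since shifted maximal regularity suffices, but the small-data global statement does not.

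Two minor slips:
\begin{enumerate}[(i)]
\item $\det M(\alpha)=\alpha^2$, not $\delta^{-1}\alpha^2$. With your value the bounded eigenvalue would tend to $-\delta^{-2}$ rather than $-\delta^{-1}$.
\item The second form of your momentum equation should read $\delta^{-1}\partial_t w-Lw+\alpha\nabla\pp=\delta^{-2}(w-\up)$.
\end{enumerate}
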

	
	In the following comment we address additional features of the solutions in \autoref{thm:main}.
	
	\begin{rem}\label{rem:mr_standard}
		\begin{enumerate}[(a)]
		\item Due to the regularity of the solutions stated in \autoref{thm:main}, the interface conditions \eqref{eq:bjs} are satisfied in the sense of trace. More precisely, the interface conditions~\eqref{eq:bjs} hold as functions in $\rL^r_\mu(0,T;\rW^{2-\frac{1}{q},q}(\Gamma))$. In particular, the Beavers-Joseph-Saffman boundary conditions holds in that sense.  
		\item The solutions in \autoref{thm:main} admit additional regularity instantaneously by parabolic smoothing. We have $\up+\delta \partial_t \up, \ \pp \in \mathrm{BUC}([\varepsilon,T];\rB^{2-\frac{2}{r}}_{q,r,\Gammap}(\Omegap))$ and $\uf \in \mathrm{BUC}([\varepsilon,T];\rB^{2-\frac{2}{r}}_{q,r,\Gammaf,\sigma}(\Omegaf))$ for all $\varepsilon > 0$. 
		Note that $\up$ itself does not regularizes because of the hyperbolic behaviour of the Biot system \eqref{eq:biot}. 
		\item 
		Using Angenent's parameter trick, see \cite{Ang:90a} and \cite[Section 9.4]{PS:16}, one can show that the solutions in \autoref{thm:main} are analytic in time and spaces: We have $\up+\delta \partial_t \up, \ \pp \in \rC^\omega((0,T) \times \Omegap))$ and $\uf \in \rC^\omega((0,T) \times \Omegaf)$. Again the variable $\up$ is merely in $\rH^{1,r}_\mu(0,T;\rH^{1,q}(\Omegap))$ due to the hyperbolic behaviour of the Biot system \eqref{eq:biot}. 
		\item If $\frac{4}{p}+\frac{3}{q} < 1$, then the global solution from \autoref{thm:main} for small initial data decays exponentially fast in $\rB^{2-\frac{2}{r}}_{q,r}$. The rate of exponential convergences is determined by the spectral bound of the linearization, which can be found in \autoref{sec:spectral theory}. 
		\end{enumerate} 
	\end{rem}

	Let us now briefly discuss the difference between the two cases in \autoref{thm:main}. 
	
	\begin{rem}\label{rem:critical spaces}
		The Navier-Stokes equations on $\R^3$ admit the scaling
		$\uf^\lambda = \lambda \uf(\lambda^2t,\lambda x)$. This yields the scaling invariant (critical) Besov spaces $\rB^{\frac{3}{q}-1}_{q,r}$. Choosing $\mu = \mu_c = \frac{3}{2q}-\frac{1}{2}+\frac{1}{r}$ critical in \autoref{thm:main}(a) yields $\rB^{2\mu-\frac{2}{r}}_{q,r} = \rB^{\frac{3}{q}-1}_{q,r}$, the scaling invariant critical Besov spaces. Note that the condition $q < 3$ in (a) guarantees that the order of the Besov spaces 
		$\rB^{\frac{3}{q}-1}_{q,r}$ is positive $\frac{3}{q}-1 > 0$. In contrast to this the initial data in part (b) of \autoref{thm:main} are merely subcritical $\rB^{2\mu-\frac{2}{r}}_{q,r} \subset \rB^{\frac{3}{q}-1}_{q,r}$. 
		The reason for this is that we restrict ourself here to strong solutions. If $q > 3$ the critical Besov spaces 
		$\rB^{\frac{3}{q}-1}_{q,r}$ are of negative order $\frac{3}{q}-1 < 0$ and therefore no longer contained in the $\rL^q$-spaces. 
		\medskip 
	\end{rem}

		We finish this section with critical blow-up criteria. 	
	\begin{cor}\label{cor:bjs blow up} 
		Suppose $q < 3$.
		Assume that $({u}_0, {v}_0, p_0, {u}_0^f) \in \rH^{1,q}_{\Gammap}(\Omegap) \times \rB_{q,r,\Gammap}^{\frac{3}{q}-1}(\Omegap) \times
		\rB_{q,r,\Gammap}^{\frac{3}{q}-1}(\Omegap)\times \big(\rB_{q,r,\Gammaf}^{\frac{3}{q}-1}(\Omegaf) \cap \rL^q_{\sigma}(\Omegaf)\big)$ and let $(\up,\pp,\uf)$ be the unique, local solution from \autoref{thm:main} on the maximal time interval $[0,t_+)$.
		\begin{enumerate}[(a)]
			\item 
			If $(\up,\pp,\uf)$ blows up in finite time, i.e. $t_+ < \infty$, then
			\begin{equation*}
				\lim_{t \uparrow t_+} 
				\biggl( \| \up(t) \|_{\rH^{1,q}} 
				+\| \up(t) + \delta \partial_t \up(t)  \|_{\rB^{\frac{3}{q}-1}_{q,r,\Gammap}}
				+\| \pp(t) \|_{\rB^{\frac{3}{q}-1}_{q,r,\Gammap}} \hspace{-0.5em} + \| \uf(t) \|_{\rB^{\frac{3}{q}-1}_{q,r,\Gammaf}} \biggr) 
				\quad \text{does not exist.}
			\end{equation*}
			\item
			If $(\up,\pp,\uf)$ blows up in finite time, i.e. $t_+ < \infty$, then 
			\begin{equation*}
				\limsup_{t \uparrow t_+} \int_0^{t} 
				\biggl(\| \up(s) \|_{\rH^{1,q}}^r
				+\| \up(t) + \delta \partial_t \up(t) \|_{\rH^{2\mu_c,q}}^r
				+\| p(s) \|_{\rH^{2\mu_c,q}}^r + \| u(s) \|_{\rH^{2\mu_c,q}}^r \mathrm{d} s \biggr) = + \infty ,
			\end{equation*}
			where $\mu_c := \frac{3}{2q}+\frac{1}{r}-\frac{1}{2}$.
		\end{enumerate} 
	\end{cor}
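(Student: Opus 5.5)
The corollary follows from the abstract blow-up criteria for semilinear evolution equations in critical weighted spaces of Prüss, Simonett and Wilke (see \cite[Section 5.7]{PS:16}), applied to the reformulation of \eqref{eq:biot}--\eqref{eq:nse}--\eqref{eq:bjs} from \autoref{sec:visco}.

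\textbf{Setting.} As in the proof of \autoref{thm:main}, write the system as $\dot z + A z = F(z)$ in $X_0$, with state $z=(\up, w, \pp, \uf)$, where $w = \up+\delta\partial_t\up$. The linear operator $A$ enjoys maximal $\rL^r_\mu$-regularity (\autoref{sec:mr}). The nonlinearity is $F(z)=(0,0,0,-\mathcal P(\uf\cdot\nabla\uf))$, which is bilinear in $\uf$ alone.

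Two facts are needed:
\begin{itemize}
\item[(i)] The trace space identifies, via \autoref{sec:representation}, as
\[
X_{\gamma,\mu} = \rH^{1,q}_{\Gammap}\times \rB^{2\mu-\frac2r}_{q,r,\Gammap}\times \rB^{2\mu-\frac2r}_{q,r,\Gammap}\times \rB^{2\mu-\frac2r}_{q,r,\Gammaf,\sigma}.
\]
At $\mu=\mu_c$ this equals the critical product space in the corollary.
\item[(ii)] The classical bilinear estimate $\|F(z_1)-F(z_2)\|_{X_0}\lesssim (\|z_1\|_{X_\beta}+\|z_2\|_{X_\beta})\|z_1-z_2\|_{X_\beta}$ holds with $\beta$ satisfying $2\beta-1=\mu_c-1/r$. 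So $\mu_c$ is exactly the critical weight in the sense of \cite{PS:16}. Moreover, $X_\beta$ contains $\rH^{2\mu_c,q}$ in the $\uf$-component.
\end{itemize}

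\textbf{Part (a).} Suppose $t_+<\infty$ and the limit in (a) exists. Then $z(t)\to z_*$ in $X_{\gamma,\mu_c}$, so $\{z(t): t\in[0,t_+)\}$ is relatively compact in $X_{\gamma,\mu_c}$. Local existence at critical weight, uniform on compact sets of initial data, then gives a common existence time $\tau>0$ for all $z(t)$. Restarting at $t_+-\tau/2$ extends the solution beyond $t_+$, which contradicts maximality.

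The key point is that the existence time from the fixed-point argument depends only on a neighbourhood of the datum in $X_{\gamma,\mu_c}$, not on its norm alone. I would verify this by approximating the data by a fixed element and using continuity of the fixed point. The parabolic smoothing from \autoref{rem:mr_standard}(b) lets us view $z(t)$, for $t>0$, as data of the $\mu_c$-problem.

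\textbf{Part (b).} Assume the integral stays bounded up to $t_+<\infty$. The abstract criterion requires $z\in \rL^r(0,t_+;X_{\beta})$, and this is implied by the bounded integral:
\begin{itemize}
\item the $\uf,\pp,w$-components are controlled by the $\rH^{2\mu_c,q}$-norms;
\item the $\up$-component is controlled by $\rH^{1,q}$, since $\up$ sits only in the non-smoothing part, where $X_\beta$ coincides with $\rH^{1,q}$.
\end{itemize}
Then maximal regularity on $(t_0,t_+)$ bounds $z$ in $\E_{1,\mu_c}$. I would do this by splitting $(t_0,t_+)$ into finitely many subintervals on which $\|z\|_{\rL^r(X_\beta)}$ is small, absorbing the bilinear term, and iterating. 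This yields uniform continuity of $z$ in $X_{\gamma,\mu_c}$ up to $t_+$, so the limit exists, contradicting (a).

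\textbf{Expected obstacle.} The main difficulty is the hyperbolic $\up$-component, which does not regularize. One must check that the uniform local existence time in (a) is compatible with it. This works because $F$ does not depend on $\up$, so the $\up$-equation is linear and its solution stays in $\rH^{1,q}$ via $\up=e^{-t/\delta}u_0+\delta^{-1}\int_0^t e^{-(t-s)/\delta}w(s)\,\d s$.

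One further mismatch needs care. The criterion in (b) uses $\rL^r$ in time without weight, whereas the abstract result uses $\rL^r_{\mu_c}$. On intervals $(t_0,t_+)$ with $t_0>0$ these norms are equivalent, so this causes no problem.
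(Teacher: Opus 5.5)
\textbf{Part (a).} This part is correct. It is the standard Pr\"uss--Simonett--Wilke argument: the orbit is relatively compact in $X_{\gamma,\mu_c}$, the existence time is uniform on small balls of data, and you restart near $t_+$. The paper compresses exactly this into the trace embedding $\E_{1,\mu}\hookrightarrow \mathrm{BUC}(J;X_{\gamma,\mu})$ together with the identification of $X_{\gamma,\mu_c}$.

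\textbf{Part (b) has a genuine gap.} You take $\rL^r(0,t_+;X_\beta)$ as the controlling quantity, with $2\beta-1=\mu_c-\frac1r$. In the parabolic components this means spatial order $2\beta=\frac12+\frac{3}{2q}$. You then assert that the $\rH^{2\mu_c,q}$-norms in (b) dominate it.

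That needs $\rH^{2\mu_c,q}\hookrightarrow \rH^{2\beta,q}$, i.e.\ $\mu_c\geq\beta$. This is equivalent to $\mu_c\geq 1-\frac1r$, i.e.\ $\frac1q+\frac{4}{3r}\geq 1$, which fails in much of the admissible range. For instance, $q=2$, $r=4$ satisfies the hypotheses of the corollary, yet $2\mu_c=1<\frac54=2\beta$.

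A scaling check confirms this. $\rL^r(0,T;\rH^{s,q})$ is invariant under $u\mapsto \lambda u(\lambda^2 t,\lambda x)$ only for $s=2\mu_c$. So the Serrin quantity is the unweighted $\rL^r(0,t_+;X_{\mu_c})$, not $\rL^r(0,t_+;X_\beta)$.

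Moreover, even granting an $\rL^r(X_\beta)$ bound, your absorption step is not justified. The bilinear estimate only gives $\|F(z)\|_{\rL^r(X_0)}\lesssim \|z\|^2_{\rL^{2r}(X_\beta)}$. You supply no inequality turning this into a small factor times $\|z\|_{\E_{1,\mu_c}}$.

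\textbf{The missing ingredient} is the interpolation inequality the paper uses. By the mixed derivative theorem and a one-dimensional Sobolev embedding in time,
\[
\bigl[\rL^r(T_0,T;X_{\mu_c}),\,\E_{1,\mu_c}(T_0,T)\bigr]_{\frac12}\hookrightarrow \rL^{2r}_{\tau_c}(T_0,T;X_\beta),\qquad 2(1-\tau_c)=1-\mu_c .
\]
Criticality is exactly what makes the interpolated spatial order equal to $2\beta$: it is the midpoint of $2\mu_c$ and $2-\frac2r$. Combined with the bilinear estimate this gives
\[
\|F(z)\|_{\rL^r_{\mu_c}(T_0,T;X_0)}\leq C\,\|z\|_{\rL^r(T_0,T;X_{\mu_c})}\,\|z\|_{\E_{1,\mu_c}(T_0,T)},
\]
with $C$ independent of $T<t_+$.

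Finiteness of the integral in (b) lets you choose $T_0$ so that $\|z\|_{\rL^r(T_0,t_+;X_{\mu_c})}$ is small. Absorption then yields $\|z\|_{\E_{1,\mu_c}(T_0,T)}\leq 2M\|z(T_0)\|_{X_{\gamma,\mu_c}}$ uniformly in $T$. From there, either your conclusion (the limit exists in $X_{\gamma,\mu_c}$, contradicting (a)) or the paper's direct continuation past $t_+$ completes the proof.
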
	
	
	\section{The Navier-Stokes-Viscoelastic Model as a semilinear Evolution equation}
	\label{sec:visco}
		
	In this section we reformulate the Navier-Stokes-viscoelastic system \eqref{eq:biot}--\eqref{eq:nse}--\eqref{eq:bjs} with \eqref{eq:bdry} and \eqref{eq:initial} as an evolution equation. To this end, we first introduce a new variable ${v}_p=\partial_t \up$ and rewrite it as a system of first order equations
	\begin{equation} \label{eq:ns-visco2}
		\left\{
		\begin{aligned}
			\partial_{t} \up &= {v}_p 
			&&\text{ in } (0,T)\times\Omegap,\\
			\partial_{t} {v}_p-\D\sigma_p^{\delta}&=0
			&&\text{ in } (0,T)\times\Omegap,\\
			\partial_t \pp + \alpha \operatorname{div}
			{v}_p
			-k\Delta \pp&=0 &&\text{ in } (0,T)\times\Omegap , \\
			\partial_t \uf-\D\sigmaf+\uf \cdot \nabla \uf 
			&=0 &&\text{ in } (0,T)\times\Omegaf,\\
			\operatorname{div}\uf&=0 &&\text{ in } (0,T)\times\Omegaf, 
		\end{aligned}
		\right.
	\end{equation}
	along with the following interface conditions
	\begin{equation} \label{eq:ns-visco3}
		\left\{
		\begin{aligned}
			\uf\cdot {n}_{\Gamma}&=({v}_p-k\nabla \pp)\cdot {n}_{\Gamma} &&\text{ on }(0,T)\times \Gamma,\\
			\sigmaf{n}_{\Gamma}&=\sigma_p^{\delta}{n}_{\Gamma} &&\text{ on }(0,T)\times \Gamma,\\
			{n}_{\Gamma}\cdot\sigmaf{n}_{\Gamma}&=-\pp &&\text{ on }(0,T)\times \Gamma,\\
			{t}^{j}_{\Gamma}\cdot\sigmaf{n}_{\Gamma}&=-\beta(\uf-{v}_p)\cdot {t}^{j}_{\Gamma},&&\text{ on }(0,T)\times \Gamma.
		\end{aligned}
		\right.
	\end{equation}
	From \eqref{eq:bdry} we have the following boundary conditions
	\begin{equation} \label{eq:ns-visco4}
		\uf = 0 \text{ on }(0,T)\times \Gammaf ,\quad \up=0 \quad \mbox{on}\quad (0,T)\times\Gammap,\quad \pp = 0 \text{ on }(0,T)\times \Gammap ,
	\end{equation}
	and from \eqref{eq:initial} the initial conditions
	\begin{equation} \label{eq:ns-visco5}
		\up(0)={u}_0, \ {v}_p(0)={v}_0, \  \pp(0)=p_0, \ \uf(0)={u}^{f}_0.
	\end{equation}
	For $q \in (1,\infty)$, we consider
	\begin{equation}
		\Xn= \rH^{1,q}(\Omegap)\times \rL^q(\Omegap) \times \rL^q(\Omegap)\times \rL^q_{\sigma}(\Omegaf).
		\label{eq:X0}
	\end{equation}
	Now, we define the maximal Laplacian $\Delta_m:\mathrm{D}(\Delta_m) \subset \rL^q(\Omegap) \rightarrow \rL^q(\Omegap)$ as
	\begin{equation}\label{eq:Delta_m}
		\Delta_m p=\Delta p \quad \mbox{ with domain } \quad \mathrm{D}(\Delta_m)= \rH^{2,q}(\Omegap),
	\end{equation}
	and the maximal Stokes operator $A_m \colon \mathrm{D}(\Am) \subset \rL^q_{\sigma}(\Omegaf) \to \rL^q_{\sigma}(\Omegaf)$ as 
	\begin{equation}\label{viscoeq:stokes}
		\Am u= \mathcal{P} \Delta u
		\quad \text{ with domain } \quad 
		\mathrm{D}(\Am)=
		\rH^{2,q}(\Omegaf) \cap \rL^q_{\sigma}(\Omegaf) ,
	\end{equation}
	where $\mathcal{P}\colon \rL^q(\Omegaf) \to \rL^q_{\sigma}(\Omegaf)$ denotes the Helmholtz projection. Further, we define the maximal elasticity operator $\mc{L}_m \colon \mathrm{D}(\mc{L}_m) \subset \rL^q(\Omegap) \rightarrow \rL^q(\Omegap)$ as
	\begin{equation}\label{viscoeq:lame}
		\mc{L}_m({u})
		=\D (2\mup\varepsilon({u})+\lambdap \D{u}\mathbb{I})
		\quad 
		\text{ with domain }
		\quad \mc{D}(\mc{L}_m) = \rH^{2,q}(\Omegap).
	\end{equation}
	Let us emphasize that the domains of these operators does not contain any boundary conditions. We incorporate them in the next step.
	Note that the conditions \eqref{eq:ns-visco3}$_3$ and \eqref{eq:ns-visco3}$_4$ describe the normal part and the tangential part of $\sigmaf n_\Gamma$, respectively. Thus, it follows
	\begin{equation}\label{visco:Cpf4}
		\sigmaf{n}_{\Gamma}=({n}_{\Gamma}\cdot\sigmaf{n}_{\Gamma}){n}_{\Gamma}+\sum_{l=1}^{2}(t^{l}_{\Gamma}\cdot \sigmaf{n}_{\Gamma})t^{l}_{\Gamma} = -\pp{n}_{\Gamma} - \beta\sum_{l=1}^{2}((\uf-{v}_p)\cdot t^{l}_{\Gamma})t^{l}_{\Gamma}.
	\end{equation}
	Now, we define the operator matrix $\bA \colon \mathrm{D}(\bA) \subset \Xn \to \Xn$ by
	\begin{equation}\label{def:Cpfdelta}
		\bA=
		\begin{pmatrix}
			0 & 1 & 0 &0\\ \mc{L}_m & \delta\mc{L}_m & -\alpha \nabla &0 \\ 0 & - \alpha \operatorname{div}& k\Delta_m & 0 \\ 0 & 0& 0 & A_m
		\end{pmatrix},  
	\end{equation}
	with non-diagonal domain
	\begin{multline}\label{eq:X1}
		\Xe=\Big\{(\up, \vp, \pp,\uf)\in \rH^{1,q}(\Omegap)\times \rH^{1,q}(\Omegap) \times {D}(\Delta_m)\times {D}(A_m)\mid \mc{L}_m(\up+\delta \vp)\in \rL^q(\Omegap),\\
		\sigmapdelta n_{\Gamma} = \sigmaf n_{\Gamma} = -\pp{n}_{\Gamma} - \beta\sum_{l=1}^{2}((\uf-{v}_p)\cdot t^{l}_{\Gamma})t^{l}_{\Gamma},\quad 
		\uf\cdot {n}_{\Gamma}=({v}_p-k\nabla \pp)\cdot {n}_{\Gamma}, \quad u_{p}|_{\Gammap} = 0, \quad
		p_{p}|_{\Gammap} = 0, \quad u_{f}|_{\Gammaf} = 0 
		\Big\}.
	\end{multline}

	By setting $\bw := (\up,\vp,\pp,\uf)^\top$ and $\bF(\bw',\bw)
	:= (0,0,0, -\mathcal{P} ({u'}_f \cdot \nabla \uf))^\top$ 
	the system \eqref{eq:ns-visco2} supplemented to \eqref{eq:ns-visco3}--\eqref{eq:ns-visco4} and \eqref{eq:ns-visco5} can be reformulated as  
	\begin{equation}\label{viscoop:Cpf}
		\frac{\d}{\d t} \bw
		=\
		\bA \bw + \bF(\bw,\bw),
		\quad  		
		\bw(0)=( {u}_0, {v}_0, p_0, {u}^{f}_0)^\top.
	\end{equation}
	on the ground space $\Xn$.

	\section{Linearization}
	\label{sec:representation}	
	
	In this section we analyze the linearization given of $\bA$.
	Let us point out that there are two difficulties in the analysis of $\bA$. The first one is the fact that $u$ and $v$ are merely in $\rH^{1,q}(\Omegap)$ and not in $\mathrm{D}(\mc{L}_m)$, the second one is the coupling via the boundary conditions \eqref{eq:ns-visco3} and \eqref{eq:ns-visco4}. To deal with the first problem, we use the fact that $\up + \delta \vp \in \mathrm{D}(\mc{L}_m)$ and the idea of suitable similarity transformations. Define the isomorphism $\bS$ on $\Xn$ by
	\begin{equation*}
		\bS := 
		\begin{pmatrix}
			1 & 0 & 0 & 0 \\
			\delta^{-1} & 1 & 0 & 0 \\
			0 & 0 & 1 & 0 \\
			0 & 0 & 0 & 1 
		\end{pmatrix}
		\qquad \text{ with inverse }
		\qquad 
		\bS^{-1}
		:=
		\begin{pmatrix}
			1 & 0 & 0 & 0 \\
			-\delta^{-1} & 1 & 0 & 0 \\
			0 & 0 & 1 & 0 \\
			0 & 0 & 0 & 1 
		\end{pmatrix} .
	\end{equation*}
	We obtain that $\bA$ is isomorphic to $\bB := \bS \bA \bS^{-1}$ given by
	\begin{equation}
		\bB
		=\begin{pmatrix}
			-\delta^{-1} & 1 & 0 &0\\ 
			-\delta^{-2} &\delta \Lm + \delta^{-1} & - \alpha \nabla & 0 \\ \delta^{-1} \alpha \div & - \alpha \div& k\Deltam & 0 \\ 0 & 0& 0 & \Am
		\end{pmatrix},
		\label{eq:tA}
	\end{equation}
	with domain
	\begin{equation*}
		\mathrm{D}(\bB)=\left\{\mathbf{w}=(\up,\vp,\pp,\uf) \in \rH^{1,q}(\Omegap) \times \mathrm{D}(\Lm) \times \mathrm{D}(\Deltam) \times \mathrm{D}(\Am) \left| 
		 \begin{aligned} 
		 \mathbf{w}\text{ satisfies } \eqref{eq:bjs'} \qquad  \text{ and } \qquad  \\
		 p_{p}|_{\Gammap} = 0, u_{p}|_{\Gammap} = 0, u_{f}|_{\Gammaf} = 0
		 \end{aligned} 
		 \right.
		\right\},
	\end{equation*}
	where the transformed interface coupling conditions are given by
	\begin{equation} 
	\left\{
	\begin{aligned}
		\varepsilon(\vp)n_\Gamma &=(\lambdap+2\mup)^{-1}[ (\alpha-1) \pp n_\Gamma + \beta\delta^{-1} \sum_{l=1}^2 (\vp \cdot t_\Gamma^l)t_\Gamma^l 
		- \beta\delta^{-1} \sum_{l=1}^2 (\up \cdot t_\Gamma^l)t_\Gamma^l - \beta \sum_{l=1}^2 (\uf \cdot t_\Gamma^l)t_\Gamma^l] ,\\ 
		\sigmaf n_\Gamma &= - \pp n_\Gamma + \beta\delta^{-1} \sum_{l=1}^2 (\vp \cdot t_\Gamma^l)t_\Gamma^l 
		- \beta\delta^{-1} \sum_{l=1}^2 (\up \cdot t_\Gamma^l)t_\Gamma^l - \beta \sum_{l=1}^2 (\uf \cdot t_\Gamma^l)t_\Gamma^l, \\
		\partial_{n_\Gamma} \pp &= k^{-1}\delta^{-1}( \vp  -  \up) n_\Gamma - k^{-1} \uf n_\Gamma .
	\end{aligned}
	\right.
	\label{eq:bjs'}
	\end{equation} 
		
	We denote by $\Ln : \mathrm{D}(\Ln) \subset \rL^q(\Omegap) \rightarrow \rL^q(\Omegap)$, the elasticity operator with homogeneous boundary conditions,~i.e,
	\begin{equation*}
		\Ln({u})
		=\D (2\mu\varepsilon({u})+\lambda \D{u}\mathbb{I})
		\quad 
		\text{ with domain }
		\quad  \mathrm{D}(\Ln) := \{ \vp \in \rH^{2,q}(\Omegap) \colon \vp|_{\Gammap} = 0 \text{ and } \varepsilon(\vp)n_{\Gamma} = 0 \},
	\end{equation*}
	and by $\Delta_0 \colon \mathrm{D}(\Delta_0)  \subset \rL^q(\Omegap) \to \rL^q(\Omegap)$, the Laplacian with homogeneous boundary conditions, i.e.
	\begin{equation*}
		\Delta_0 \pp = \Delta \pp \quad  \mbox{ with domain } \quad \mathrm{D}(\Delta_0)= \{ \pp \in \rH^{2,q}(\Omegap) \mid \partial_{n_{\Gamma}} \pp = 0 \text{ and } \pp|_{\Gammap} = 0 \} ,
	\end{equation*}
	and by $A_0 \colon \mathrm{D}(A_0) \subset \rL^q_{\sigma}(\Omegaf) \to \rL^q_{\sigma}(\Omegaf)$ the Stokes operator with homogeneous boundary conditions, i.e.
	\begin{equation*}
		\An u = \mathcal{P} \Delta u
		\quad \text{ with domain } \quad 
		\mathrm{D}(\An)=
		\{ \uf \in \rH_{\sigma}^{2,q}(\Omegaf)\mid \sigmaf n_{\Gamma} = 0 \text{ and } \uf|_{\Gammaf} = 0 \}.
	\end{equation*} 
	Now, we consider the operator matrix $\bBn \colon \mathrm{D}(\bBn) \subset \Xn \to \Xn$ given by
	\begin{align}\label{def:A0}
		\bBn
		&=\begin{pmatrix}
			-\delta^{-1} & 1 & 0 &0\\ -\delta^{-2} & \delta \Ln+\delta^{-1} & -\alpha \nabla &0 \\ \delta^{-1}\alpha \div & - \alpha \div& k\Deltan & 0 \\ 0 & 0& 0 & \An
		\end{pmatrix}, 
	\end{align}
	with diagonal domain 
	\begin{equation}\label{def:domainA0}
		\mathrm{D}(\bBn)
		= \rH^{1,q}(\Omegap) \times \mathrm{D}(\Ln) \times \mathrm{D}(\Deltan) \times \mathrm{D}(\An).
	\end{equation}		
	We introduce the operator matrix $\Gn \colon \mathrm{D}(\Gn) \subset \rH^{1,q}(\Omegap) \times \rL^q(\Omegap) \to \rH^{1,q}(\Omegap) \times \rL^q(\Omegap)$, which models the Biot system part and it is given by
	\begin{equation*}
		\Gn := 
		\begin{pmatrix}
			-\delta^{-1} & 1 \\
			-\delta^{-2} & \delta \Ln+\delta^{-1} 
		\end{pmatrix},\quad \mbox{ with domain }
		\quad \mathrm{D}(\Gn)
		:=
		\rH^{1,q}(\Omegap) \times \mathrm{D}(\Ln).
	\end{equation*}
	Note that we can rewrite \eqref{def:A0}--\eqref{def:domainA0} as
	\begin{equation*}
		\bBn
		=\begin{pmatrix}
			\Gn & \binom{0}{-\alpha \nabla} & \binom{0}{0} \\ (\delta^{-1}\alpha \div, - \alpha \div)& k\Deltan & 0 \\ (0,0)& 0 & \An
		\end{pmatrix}, \quad \mbox{ with domain }
		\quad \mathrm{D}(\bBn) = \mathrm{D}(\Gn) \times \mathrm{D}(\Deltan) \times \mathrm{D}(\An) .
	\end{equation*} 
	
	In the next lemma we describe the real and complex interpolation spaces associated to $\bBn$.
	
	\begin{lem}\label{lem:interpolation spaces A0}
		We denote by $\Ze := \mathrm{D}(\bBn)$ equipped with the graph norm and by $\Zn := \Xn$. 
		\begin{enumerate}[(i)]
			\item For  $r \in (1,\infty)$, the real interpolation spaces $\bZ_{\theta,r} := (\Ze,\Zn)_{\theta,r}$
			are given by
			\begin{equation*}
				\bZ_{\theta,r}
				= 
				\left\{
				\begin{aligned}
					\begin{aligned}
						\rH^{1,q}(\Omegap) &\times \rB^{2\theta}_{q,r,\Gammap}(\Omegap) \times \rB^{2\theta}_{q,r,\Gammap}(\Omegap) \times \bigl( \rB^{2\theta}_{q,r,\Gammaf}(\Omegaf) \cap \rL^q_{\sigma}(\Omegaf) \bigr),
						&\text{ if } \theta \in (-1,\tfrac{1}{2}+\tfrac{1}{2q}),\\
						\rH^{1,q}(\Omegap) &\times \{\vp \in \rB^{2\theta}_{q,r,\Gammap}(\Omegap) \colon \varepsilon(v)n_\Gamma = 0 \} \times \{\pp \in \rB^{2\theta}_{q,r,\Gammap}(\Omegap) \colon \partial_{n_\Gamma} \pp = 0 \} \\ &\times \bigl\{ \uf \in \rB^{2\theta}_{q,r,\Gammaf}(\Omegaf) \cap \rL^q_{\sigma}(\Omegaf) \colon \sigmaf n_\Gamma = 0 \bigr\},
						&\text{ if } \theta \in (\tfrac{1}{2}+\tfrac{1}{2q},1).
					\end{aligned}
				\end{aligned}
				\right.
				.			
			\end{equation*}
			\item The complex interpolation spaces $\bZ_\theta := [\Ze,\Zn]_\theta$ are given by
			\begin{equation*}
				\bZ_\theta 
				:= 
				\left\{
				\begin{aligned}
					\begin{aligned}
						\rH^{1,q}(\Omegap) &\times \rH^{2\theta,q}_{\Gammap}(\Omegap) \times \rH^{2\theta,q}_{\Gammap}(\Omegap) \times \bigl( \rH^{2\theta,q}_{\Gammaf}(\Omegaf) \cap \rL^q_{\sigma}(\Omegaf) \bigr)
						&\text{ if } \theta \in (-1,\tfrac{1}{2}+\tfrac{1}{2q}),\\
						\rH^{1,q}(\Omegap) &\times \{\vp \in \rH^{2\theta,q}_{\Gammap}(\Omegap) \colon \varepsilon(v)n_\Gamma = 0 \} 
						\times \{\pp \in \rH^{2\theta,q}_{\Gammap}(\Omegap) \colon \partial_{n_\Gamma} \pp = 0 \} \\ &\times \bigl\{ \uf \in \rH^{2\theta,q}_{\Gammaf}(\Omegaf) \cap \rL^q_{\sigma}(\Omegaf) \colon \sigmaf n_\Gamma = 0 \bigr\}
						&\text{ if } \theta \in (\tfrac{1}{2}+\tfrac{1}{2q},1) .
					\end{aligned}
				\end{aligned}
				\right.
				.		
			\end{equation*}
		\end{enumerate}	
	\end{lem}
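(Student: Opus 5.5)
The strategy is to reduce the interpolation of $\Ze=\mathrm{D}(\bBn)$ and $\Zn=\Xn$ to interpolation of the individual diagonal blocks, then invoke the known descriptions of interpolation spaces of elliptic operators with homogeneous boundary conditions. Since $\mathrm{D}(\bBn)$ is a diagonal domain, $\mathrm{D}(\bBn)=\rH^{1,q}(\Omegap)\times\mathrm{D}(\Ln)\times\mathrm{D}(\Deltan)\times\mathrm{D}(\An)$. The off-diagonal entries of $\bBn$ are lower order: $\delta^{-1}\alpha\div$ and $-\alpha\div$ map $\rH^{1,q}$ into $\rL^q$, and $\alpha\nabla$ maps $\mathrm{D}(\Deltan)\subset\rH^{2,q}$ into $\rL^q$. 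The entries $-\delta^{-1},1,-\delta^{-2},\delta^{-1}$ are bounded or relatively bounded, the latter because $\mathrm{D}(\Ln)\subset\rH^{2,q}\hookrightarrow\rH^{1,q}$. Hence the graph norm of $\bBn$ is equivalent to the product of the graph norms
\[
\|u\|_{\rH^{1,q}}+\|v\|_{\mathrm{D}(\Ln)}+\|p\|_{\mathrm{D}(\Deltan)}+\|u_f\|_{\mathrm{D}(\An)}.
\]
The $\lesssim$ direction is immediate. For $\gtrsim$, one first recovers $\|\Ln v\|_{\rL^q}$, $\|\An u_f\|_{\rL^q}$ and $\|\Deltan p\|_{\rL^q}$ from the rows, then absorbs the lower-order terms using $\|p\|_{\rH^{1,q}}\le\varepsilon\|p\|_{\rH^{2,q}}+C_\varepsilon\|p\|_{\rL^q}$ and elliptic regularity for $\Ln$ and $\Deltan$. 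Real and complex interpolation commute with finite products, and the first component is the same space $\rH^{1,q}(\Omegap)$ at both endpoints, so it interpolates to itself. It therefore remains to identify $(\rL^q,\mathrm{D}(T))_{\theta,r}$ and $[\rL^q,\mathrm{D}(T)]_\theta$ for $T\in\{\Ln,\Deltan,\An\}$. For $\theta\in(-1,0]$ these spaces are understood via extrapolation, and by duality they reduce to the positive range.

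Each of $\Ln$ and $\Deltan$ is a normally elliptic second-order operator on $\Omegap$. It carries a Dirichlet condition on $\Gammap$ and a Neumann-type condition on $\Gamma$: $\varepsilon(v)n_\Gamma=0$ for the Lamé part, which is Lopatinskii--Shapiro with the strongly elliptic Lamé system, and $\partial_{n_\Gamma}p=0$. Because $\Gamma$ and $\Gammap$ are disjoint smooth components, a partition of unity localizes the problem to a single boundary operator near each piece. Then Seeley's theorem \cite{See:72} and Amann's results \cite{Ama:19}, or equivalently Triebel \cite{Tri:78}, apply. The complex interpolation space is $\rH^{2\theta,q}$ with exactly those boundary conditions that are meaningful at level $2\theta$. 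The Dirichlet trace is imposed once $2\theta>1/q$, which yields the spaces $\rH^{2\theta,q}_{\Gammap}$ including the Lions--Magenes space at $2\theta=1/q$. The first-order conditions on $\Gamma$ are imposed once $2\theta>1+1/q$, i.e.\ $\theta>\frac12+\frac1{2q}$. The real case gives the analogous Besov spaces. For these identifications one needs bounded imaginary powers (or $\mathcal{R}$-sectoriality) of shifted versions of $-\Ln$ and $-\Deltan$, which is standard for such elliptic boundary value problems.

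The main obstacle is the Stokes component $\An$, which combines Dirichlet conditions on $\Gammaf$ with the stress-free (Neumann-type) condition $\sigmaf n_\Gamma=0$ on $\Gamma$ and lives in $\rL^q_\sigma$. For this I would use the Helmholtz projection to write $[\rL^q_\sigma,\mathrm{D}(\An)]_\theta=[\rL^q,\mathrm{D}(\Delta_{\rm mix})]_\theta\cap\rL^q_\sigma$. This reduction works because $\mathcal{P}$ is a bounded projection compatible with both endpoint spaces, so the retraction--coretraction argument applies. Here $\Delta_{\rm mix}$ is the vector Laplacian with the corresponding mixed conditions, and the needed compatibility is that $\mathcal{P}$ preserves $\rH^{2\theta,q}_{\Gammaf}$ below the Neumann threshold. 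I expect to rely on the known results on interpolation for the Stokes operator with mixed Dirichlet/Neumann conditions (as in \cite{PS:16}). The exceptional value $\theta=\frac12+\frac1{2q}$ is excluded in the statement, so no Lions--Magenes-type space for the first-order condition is needed.
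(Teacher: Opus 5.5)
Your route is the paper's. Its proof consists only of the observation $\mathrm{D}(\bBn)=\mathrm{D}(\bDn)$, a diagonal domain with equivalent norms, together with $\Zn=\Xn$. After that, interpolation splits over the factors, $\rH^{1,q}(\Omegap)$ interpolates to itself, and the remaining factors are the known interpolation spaces of $\Ln$, $\Deltan$ and $\An$. Your graph-norm equivalence and your Seeley/Amann identification for $\Ln$ and $\Deltan$ make this explicit and are fine.

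The step that would fail as written is your justification for the Stokes block. The retraction--coretraction argument needs a bounded projection of $\rL^q(\Omegaf;\R^3)$ onto $\rL^q_\sigma(\Omegaf)$ that also maps the upper endpoint $\mathrm{D}(\Delta_{\mathrm{mix}})$ into $\mathrm{D}(\An)$. The Helmholtz projection is not such a map. Since $\mathcal{P}u=u-\nabla\pi$ with $\pi$ fixed by an elliptic problem, for $u|_{\Gammaf}=0$ you get $(\mathcal{P}u)|_{\Gammaf}=-\nabla\pi|_{\Gammaf}$. This does not vanish in general, because the elliptic problem for $\pi$ cannot prescribe both the normal and the tangential part of $\nabla\pi$ on $\Gammaf$. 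Likewise $\varepsilon(\mathcal{P}u)n_\Gamma=-(\nabla^2\pi)\, n_\Gamma$, so the stress condition on $\Gamma$ is not preserved either. For the same reason, the compatibility you single out, that $\mathcal{P}$ preserves $\rH^{2\theta,q}_{\Gammaf}$, is false as soon as $2\theta>\frac1q$.

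You have two ways to repair this. One is to use a projection adapted to the Stokes operator. In the pure Dirichlet case, for instance, $A^{-1}\mathcal{P}\Delta$ on $\mathrm{D}(\Delta_D)$ extends to $\rL^q$ by duality and Green's formula, is the identity on $\rL^q_\sigma$, and maps $\mathrm{D}(\Delta_D)$ into $\mathrm{D}(A)$. The other is simply to quote the known characterization of the interpolation spaces of this Stokes operator, which is what the paper implicitly does.

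Your remark that $\theta\in(-1,0)$ reduces to the positive range \emph{by duality} is not an argument. Extrapolation spaces depend on $\bBn$ itself, not only on $\mathrm{D}(\bBn)$, so neither your reduction nor the paper's domain identification says anything there. In fact the stated form fails for $\theta<-\frac12$. Take $v\in[\rL^q(\Omegap),\mathrm{D}(\Ln)]_{\theta+1}\setminus\rH^{1,q}(\Omegap)$. Then $(0,v,0,0)\in\bZ_{\theta+1}$, and the extrapolated $\lambda-\bBn$ maps it to an element of $\bZ_\theta$ whose first component is $-v$. So the product form with $\rH^{1,q}(\Omegap)$ in the first slot cannot hold. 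What your argument, like the paper's, actually establishes is the range $\theta\in(0,1)$, and that is all that is used concretely later.
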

	\begin{proof}
		This follows from $\bZ_1 = \mathrm{D}(\bB_0) = \mathrm{D}(\bD_0)$ and $\Zn = \Xn$. 
	\end{proof}
	
	Using the notation $\Gm \colon \mathrm{D}(\Gm) \subset \rH^{1,q}(\Omegap) \times \rL^q(\Omegap) \to \rH^{1,q}(\Omegap) \times \rL^q(\Omegap)$ which models the Biot system part
	\begin{equation*}
		{\mathrm{G}}_m := 
		\begin{pmatrix}
			-\delta^{-1} & 1 \\
			-\delta^{-2} & \delta \Lm+\delta^{-1} 
		\end{pmatrix},\quad \mbox{ with domain }
		\quad \mathrm{D}({\mathrm{G}}_m)
		:=
		\rH^{1,q}(\Omegap) \times \mathrm{D}(\Lm),
	\end{equation*}
	we define the maximal operator $\bBm \colon \mathrm{D}(\bBm) \subset \bX \to \bX$ by
	\begin{equation}
		\bBm 
		= 
		\begin{pmatrix}
			\Gm & \binom{0}{-\alpha \nabla} & \binom{0}{0} \\
			(\delta^{-1}\alpha \div, -\alpha \div) & k \Deltam & 0 \\
			(0,0) & 0 & \Am 
		\end{pmatrix} , \quad 
		\mathrm{D}(\bBm) = \rH^{1,q}(\Omegap) \times \mathrm{D}(\Lm) \times \mathrm{D}(\Deltam) \times \mathrm{D}(\Am)
		\label{eq:tbAm}
	\end{equation}
	and note that $\bBn$ is a restriction of $\bBm$. 

	Next, we consider the elliptic boundary value problems associated to the Laplacian, the Lamé and the Stokes operators, respectively, given by
	\begin{equation}
		\left\{ 
		\begin{aligned}
			&\Deltam \pp = \lambda \pp, \\
			\partial_{n_{\Gamma}} \pp = &\varphi, \qquad 
			\pp|_{\Gammap} = 0 
		\end{aligned}
		\right.
		\label{eq:laplacian neumann problem}
	\end{equation}	
	for $\varphi \in \mathrm{W}^{1-\frac{1}{q},q}(\Gamma)$, and
	\begin{equation}
		\left\{ 
		\begin{aligned}
			&\Lm \vp = \lambda \vp, \\
			\varepsilon(\vp) n_{\Gamma} = &\eta, \qquad 
			\vp|_{\Gammap} = 0 
		\end{aligned}
		\right.
		\label{eq:Lame neumann problem}
	\end{equation}	
	for $\eta \in \mathrm{W}^{1-\frac{1}{q},q}(\Gamma)$,
	as well as,
	\begin{equation}
		\left\{ 
		\begin{aligned}
			&\Am \uf = \lambda \uf, \\
			\sigmaf n_{\Gamma} = &\psi, \qquad 
			\uf|_{\Gammaf} = 0 
		\end{aligned}
		\right.
		\label{eq:stokes neumann problem}
	\end{equation}	
	for $\psi \in \mathrm{W}^{1-\frac{1}{q},q}(\Gamma)$.	Finally, we consider the boundary problem associated to $\Gm$ given by
		\begin{equation}
		\left\{ 
		\begin{aligned}
			\Gm \tbinom{\up}{\vp} &= \lambda \tbinom{\up}{\vp}, \\
			\varepsilon(\vp) n_{\Gamma} = &\eta, \qquad 
			\vp|_{\Gammap} = 0 
		\end{aligned}
		\right.
		\label{eq:G neumann problem}
	\end{equation}	
	for $\eta \in \mathrm{W}^{1-\frac{1}{q},q}(\Gamma)$. We then obtain the following results.

	\begin{lem}\label{lem:L0}
		\begin{enumerate}[(i)]
			\item For all $\lambda \in \rho(\Deltan)$ and $\varphi \in \rW^{1-\frac{1}{q},q}(\Gamma)$ there exists a unique solution $\pp \in \rH^{2,q}(\Omegap)$ to \eqref{eq:laplacian neumann problem}. Moreover, it satisfies the estimate
			\begin{equation*}
				\| \pp \|_{\rH^{2,q}(\Omegap)} \leq C \cdot \| \varphi \|_{\rW^{1-\frac{1}{q},q}(\Gamma)} 
			\end{equation*}
			and the solution operator $\rL_\lambda^{\Deltam} \colon \rW^{1-\frac{1}{q},q}(\Gamma) \to \rH^{2,q}(\Omegap)$ of \eqref{eq:Lame neumann problem} is bounded. 
			\item For all $\lambda \in \rho(\Ln)$ and  $\eta \in \rW^{1-\frac{1}{q},q}(\Gamma)$ there exists a unique solution $\vp \in \rH^{2,q}(\Omegap)$ to \eqref{eq:Lame neumann problem}. Furthermore, it satisfies the estimate
			\begin{equation*}
				\| \vp \|_{\rH^{2,q}(\Omegap)} \leq C \cdot \| \eta \|_{\rW^{1-\frac{1}{q},q}(\Gamma)} 
			\end{equation*}
			and the solution operator $\rL_\lambda^{\Lm} \colon \rW^{1-\frac{1}{q},q}(\Gamma) \to \rH^{2,q}(\Omegap)$ of \eqref{eq:Lame neumann problem} is bounded. 
			\item For all $\lambda \in \rho(\An)$ and $\psi \in \rW^{1-\frac{1}{q},q}(\Gamma)$ there exists a unique solution $\uf \in \rH^{2,q}(\Omegaf) \cap \rL^q_{\sigma}(\Omegaf)$ to \eqref{eq:stokes neumann problem}. Moreover, it satisfies the estimate
			\begin{equation*}
				\| \uf \|_{\rH^{2,q}(\Omegaf)} \leq C \cdot \| \psi \|_{\rW^{1-\frac{1}{q},q}(\Gamma)} 
			\end{equation*}
			and the solution operator $\rL_\lambda^{\Am} \colon \rW^{1-\frac{1}{q},q}(\Gamma) \to \rH^{2,q}(\Omegaf) \cap \rL^q(\Omegaf)$ of \eqref{eq:stokes neumann problem} is bounded. 
			 \item For all $\lambda \in \rho(\Gn)$ and  $\eta \in \rW^{1-\frac{1}{q},q}(\Gamma)$ there exists a unique solution $\up, \vp \in \rH^{2,q}(\Omegap)$ to \eqref{eq:G neumann problem}. Furthermore, it satisfies the estimate
			\begin{equation*}
				\| \up \|_{\rH^{2,q}(\Omegap)}+\| \vp \|_{\rH^{2,q}(\Omegap)} \leq C \cdot \| \eta \|_{\rW^{1-\frac{1}{q},q}(\Gamma)} 
			\end{equation*}
			and the solution operator $\rL_\lambda^{\Gm} \colon \rW^{1-\frac{1}{q},q}(\Gamma) \to \rH^{2,q}(\Omegap)\times \rH^{2,q}(\Omegap)$ of \eqref{eq:G neumann problem} is bounded. 
		\end{enumerate}
	\end{lem}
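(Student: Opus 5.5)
The plan is to use the classical abstract framework for boundary perturbations (Greiner's Dirichlet operators): in each part, split the inhomogeneous boundary value problem into a lifting of the boundary datum plus a correction in the domain of the homogeneous operator. First I would fix, for each of the elliptic systems involved (Laplacian with Neumann condition on $\Gamma$ and Dirichlet condition on $\Gammap$, Lamé system with traction-type condition $\varepsilon(\vp)n_\Gamma$ on $\Gamma$ and Dirichlet condition on $\Gammap$, Stokes system with stress condition on $\Gamma$ and Dirichlet condition on $\Gammaf$), a bounded right inverse of the corresponding boundary trace. Concretely, there are bounded operators $R^{\Delta}, R^{\L}\colon \rW^{1-\frac1q,q}(\Gamma)\to \rH^{2,q}(\Omegap)$ and $R^{A}\colon \rW^{1-\frac1q,q}(\Gamma)\to \rH^{2,q}(\Omegaf)\cap\rL^q_\sigma(\Omegaf)$ with $\partial_{n_\Gamma}R^\Delta\varphi=\varphi$, $\varepsilon(R^{\L}\eta)n_\Gamma=\eta$, $\sigmaf(R^A\psi)n_\Gamma=\psi$ (with the corresponding pressure), and vanishing traces on $\Gammap$ resp. $\Gammaf$. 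These exist because the interface $\Gamma$ is separated from $\Gammap$ and $\Gammaf$, so one can work with cut-offs near $\Gamma$ and standard surjectivity of the Neumann-type traces $\rH^{2,q}\to \rW^{1-\frac1q,q}(\Gamma)$. They also follow from the Lopatinskii--Shapiro property of these systems, which gives solvability of the boundary problem at a large spectral parameter. For Stokes, the lift has to be made solenoidal, e.g. via a Bogovskii correction supported away from $\Gamma$.

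Given such a lift, for (i) I write $\pp=R^\Delta\varphi+w$. Then \eqref{eq:laplacian neumann problem} is equivalent to $w\in\mathrm{D}(\Deltan)$ with $(\lambda-\Deltan)w=(\Deltam-\lambda)R^\Delta\varphi\in\rL^q(\Omegap)$. Since $\lambda\in\rho(\Deltan)$, this has the unique solution $w=R(\lambda,\Deltan)(\Deltam-\lambda)R^\Delta\varphi$, and hence $\rL^{\Deltam}_\lambda=(\Id-R(\lambda,\Deltan)(\lambda-\Deltam))R^\Delta$. Uniqueness follows because the difference of two solutions lies in $\mathrm{D}(\Deltan)$ and is annihilated by $\lambda-\Deltan$. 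The estimate comes from the closed graph (graph norm of $\Deltan$ equivalent to $\rH^{2,q}$ by elliptic regularity). Parts (ii) and (iii) are verbatim the same with $\Ln,\An$; for (iii) I apply $\mathcal P$ and note $\An=\Am|_{\mathrm{D}(\An)}$.

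For (iv) I reduce to (ii). The first row of $\Gm\binom{\up}{\vp}=\lambda\binom{\up}{\vp}$ gives $\up=(\lambda+\delta^{-1})^{-1}\vp$ when $\lambda\neq-\delta^{-1}$. Substituting into the second row yields
\[
\delta\Lm\vp=\Bigl(\lambda-\delta^{-1}+\frac{\delta^{-2}}{\lambda+\delta^{-1}}\Bigr)\vp=:\delta\,\nu(\lambda)\,\vp .
\]
Thus $\vp$ solves \eqref{eq:Lame neumann problem} with spectral parameter $\nu(\lambda)$, and then $\up$ inherits $\rH^{2,q}$-regularity and the estimate. The same Schur-complement computation for $\Gn$ shows that $\lambda\in\rho(\Gn)$ with $\lambda\neq-\delta^{-1}$ is equivalent to $\nu(\lambda)\in\rho(\Ln)$. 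I expect the main subtle point to be exactly this step: checking that $\lambda=-\delta^{-1}$ is not in $\rho(\Gn)$ (or handling it separately), and that the resolvent set of the matrix operator on $\rH^{1,q}\times\rL^q$ corresponds precisely to that of $\Ln$ despite the non-diagonal structure. The construction of the lifting for the traction condition of the Lamé system is the other point requiring care, but it is standard.
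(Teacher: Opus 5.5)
Your argument is correct. It differs from the paper's mainly in how much is done by hand.

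\textbf{Parts (i)--(iii).} The paper simply cites parameter-elliptic theory: \cite{DHP:04} for the Lam\'e system and \cite[Corollary 7.4.5]{PS:16} for the Laplacian and the Stokes system. You instead construct $\rL_\lambda=(\Id-R(\lambda,A_0)(\lambda-A_m))R$ from a bounded lift $R$ of the Neumann-type trace. This is exactly the mechanism behind \cite[Lemma 1.2]{Gre:87}, and it covers every $\lambda$ in the resolvent set at once.

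\textbf{Part (iv).} Both proofs use the same identity: your $\nu(\lambda)$ equals the paper's $\mu_0=\lambda^2/(\delta\lambda+1)$. The paper performs the reduction $\ker(\lambda_0-\Gm)\cong\ker(\mu_0-\Lm)$ only at one large $\lambda_0$. It then extends to all of $\rho(\Gn)$ via \cite[Lemma 3.4]{AE:18}. You do the reduction at every $\lambda$, which needs the Schur-complement equivalence $\lambda\in\rho(\Gn)\setminus\{-\delta^{-1}\}\iff\nu(\lambda)\in\rho(\Ln)$.

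That equivalence holds: solve the first row for $\up=(\lambda+\delta^{-1})^{-1}(f+\vp)$ and substitute. The point you flagged is harmless. At $\lambda=-\delta^{-1}$ the first row of $(\lambda-\Gn)\binom{\up}{\vp}=\binom{f}{g}$ forces $\vp=-f$, which is impossible in $\mathrm{D}(\Ln)$ for general $f\in\rH^{1,q}(\Omegap)$. Hence $-\delta^{-1}\in\sigma(\Gn)$, matching $\sigma_2(C)=\{-\delta^{-1}\}$ in \autoref{lem:G0 spectrum}.

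Your version gives the explicit formula $\rL^{\Gm}_\lambda\eta=\bigl((\lambda+\delta^{-1})^{-1}\rL^{\Lm}_{\nu(\lambda)}\eta,\,\rL^{\Lm}_{\nu(\lambda)}\eta\bigr)$. It also shows that the paper's two conditions on $\lambda_0$ are one and the same. The paper's route needs only a single spectral point, but relies on the abstract extension lemma.

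\textbf{Stokes lift.} One detail needs repair: a Bogovskii correction supported away from $\Gamma$ cannot remove the divergence of the lift near $\Gamma$. Instead proceed as follows.
\begin{itemize}
\item Take $u_0$, cut off near $\Gamma$, with $u_0|_\Gamma=0$ and $\partial_{n_\Gamma}u_0=\psi_T$, the tangential part of $\psi$.
\item Put the normal part into the pressure by prescribing $\pi|_\Gamma=-\psi\cdot n_\Gamma$.
\end{itemize}
Since $\nabla u_0=\partial_{n_\Gamma}u_0\otimes n_\Gamma$ on $\Gamma$, you get $2\varepsilon(u_0)n_\Gamma=\psi_T$ and $\sigmaf n_\Gamma=\psi$. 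Moreover $\operatorname{div}u_0$ has zero trace on $\partial\Omegaf$ and zero mean. The Bogovskii operator therefore yields $z\in\rW^{2,q}_0(\Omegaf)$ with $\operatorname{div}z=\operatorname{div}u_0$. Because $z$ and $\nabla z$ vanish on $\partial\Omegaf$, $u_0-z$ is a solenoidal lift with the same stress data. How the pressure is tied to $u$ in $\Am$ and $\An$ is left implicit in the paper as well.
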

	\begin{proof} As Lam\'{e} operator is parameter elliptic, we can use \cite{DHP:04} in order to obtain a bounded solution operator $\rL_\lambda^{\Lm} \colon \rW^{1-\frac{1}{q},q}(\Gamma) \to \rH^{2,q}(\Omegap)$ of \eqref{eq:Lame neumann problem}. By \cite[Corollary 7.4.5.]{PS:16},  we obtain bounded solution operators $\rL_\lambda^{\Deltam} \colon \rW^{1-\frac{1}{q},q}(\Gamma) \to \rH^{2,q}(\Omegap)$ of \eqref{eq:laplacian neumann problem} and $\rL_\lambda^{\Am} \colon \rW^{1-\frac{1}{q},q}(\Gamma) \to \rH^{2,q}(\Omegaf) \cap \rL^q(\Omegaf)$ of \eqref{eq:stokes neumann problem}.
		
	 In order to prove (iv) we 
	note that by \cite[Lemma 3.4]{AE:18} or \cite[Lemma 1.2]{Gre:87} it is sufficient to show that the operator $\rL^{\Gm}_{\lambda_0}$ exists and is bounded for some $\lambda_0 \in \rho(\Gn)$. 
	We may consider $\lambda_0 \in \rho(\Gn)$ such that $\mu_0 := \frac{\lambda_0^2}{\delta \lambda_0 + 1} \in \rho(\Ln)$ if we choose $\lambda_0$ sufficient large, since $\Ln$ and $\Gn$ are generators of $\rC_0$-semigroups. 
	We have $\binom{\up}{\vp} \in \ker(\lambda_0-\Gm)$ if and only if
	\begin{align*}
		-\delta^{-1} \up + \vp &= \lambda_0 \up, \\
		-\delta^{-2} \up 
		+ \delta \Lm \vp + \delta^{-1} \vp &= \lambda_0 \vp  .
	\end{align*} 
	The first equation guarantees that $\up$ and $\vp$ have the same regularity. Using the first equation, the second is equivalent to 
	\begin{equation*}
		\Lm \vp 
		= \delta^{-1} (\lambda_0 - \delta^{-1} + \delta^{-2}( \lambda_0+\delta^{-1} )^{-1}) \vp 
		=
		\frac{\lambda_0^2}{\delta \lambda_0 + 1} \vp 
		= \mu_0 \vp 
	\end{equation*}
	and $\ker(\lambda_0 -\Gm) \cong \ker(\mu_0 - \Lm)$.
	Now, the result follows from (ii).
	\end{proof}
	\begin{rem}
		Let us point out that the regularity of the solution $\up$ of \eqref{eq:G neumann problem} is one derivative better than one would expect firstly, due to the direct relation between $\up$ and $\vp$. This fact will play a crucial role in the spectral theory later. 
	\end{rem}
	In the next step we construct the solution operator $\rL_\lambda^{\bB}$ associated to $\bBm$ from \eqref{eq:tbAm} and the boundary operator $\text{L}(\up,\vp,\pp,\uf)^\top := (\varepsilon(\vp)n_\Gamma,\partial_{n_\Gamma}\pp, \sigmaf n_\Gamma)^\top$, i.e.
	\begin{equation}
		\bBm \mathbf{w}= \lambda \mathbf{w}, \qquad \mathrm{L} \mathbf{w}= (\varphi,\eta,\psi)^\top .
		\label{eq:neumann problem}
	\end{equation} 
	
	\begin{lem}\label{lem:L0_real}
		For all $\lambda \in \rho(\bBn)$ there exists a bounded solution operator 
		\begin{equation*} 
		\rL_\lambda^{\bB} \colon \rW^{1-\frac{1}{q},q}(\Gamma) \times \rW^{1-\frac{1}{q},q}(\Gamma) \times \rW^{1-\frac{1}{q},q}(\Gamma) \to \rH^{1,q}(\Omegap)\times \rH^{2,q}(\Omegap)\times \rH^{2,q}(\Omegap)\times \rH^{2,q}_{\sigma}(\Omegaf)
		\end{equation*} 
		of \eqref{eq:neumann problem}.
	\end{lem}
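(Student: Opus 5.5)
We build $\rL_\lambda^{\bB}$ by solving the decoupled boundary problems of \autoref{lem:L0} one after another, using the lower block-triangular structure of $\bBm$ in \eqref{eq:tbAm}. Given boundary data $(\varphi,\eta,\psi)$ (with $\varepsilon(\vp)n_\Gamma=\varphi$, $\partial_{n_\Gamma}\pp=\eta$, $\sigmaf n_\Gamma=\psi$), we look for a solution $\mathbf w=\mathbf w_1+\mathbf w_2$, where $\mathbf w_1$ carries the boundary data and $\mathbf w_2$ is a correction in $\mathrm{D}(\bBn)$.

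The first step constructs $\mathbf w_1$. Fix $\lambda\in\rho(\bBn)$. We would first treat $\lambda$ lying additionally in $\rho(\Gn)\cap\rho(\Deltan)\cap\rho(\An)$, for instance $\lambda$ large and real. For such $\lambda$ we set
\[
(\up^1,\vp^1)^\top:=\rL^{\Gm}_\lambda\varphi,\qquad \pp^1:=\rL^{\Deltam}_\lambda\eta,\qquad \uf^1:=\rL^{\Am}_\lambda\psi,
\]
using \autoref{lem:L0}(i),(iii),(iv). These maps are bounded into $\rH^{2,q}\times\rH^{2,q}\times\rH^{2,q}\times\rH^{2,q}_\sigma$, and hence also into the claimed target, since $\rH^{2,q}\hookrightarrow\rH^{1,q}$ for $\up$. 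Because $\mathbf w_1$ only solves the diagonal blocks, the residual is
\[
(\lambda-\bBm)\mathbf w_1=\bigl(0,\ \alpha\nabla\pp^1,\ -\delta^{-1}\alpha\,\div\up^1+\alpha\,\div\vp^1,\ 0\bigr)^\top=:\mathbf f .
\]
By the $\rH^{2,q}$ bounds this residual lies in $\Xn$: $\nabla\pp^1\in\rH^{1,q}\subset\rL^q$, and $\div\up^1,\div\vp^1\in\rH^{1,q}\subset\rL^q$. Its norm is bounded by the $\rW^{1-\frac1q,q}(\Gamma)^3$-norm of the data.

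The second step corrects the residual. We set $\mathbf w_2:=(\lambda-\bBn)^{-1}\mathbf f\in\mathrm{D}(\bBn)$. This correction carries homogeneous boundary conditions, so $\mathrm L\mathbf w_2=0$, and $\bBm\mathbf w_2=\bBn\mathbf w_2$. Then $\mathbf w=\mathbf w_1-\mathbf w_2$ satisfies $(\lambda-\bBm)\mathbf w=\mathbf f-\mathbf f=0$ and $\mathrm L\mathbf w=(\varphi,\eta,\psi)$. Since $\mathrm{D}(\bBn)\hookrightarrow \rH^{1,q}\times\rH^{2,q}\times\rH^{2,q}\times\rH^{2,q}_\sigma$, the correction lands in the right space with norm bounded by that of the data. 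Uniqueness is immediate: the difference of two solutions lies in $\ker(\lambda-\bBn)=\{0\}$.

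The third step extends the construction to every $\lambda\in\rho(\bBn)$, which is the main subtlety. The natural route is the abstract result already used in the proof of \autoref{lem:L0}(iv), namely \cite[Lemma 3.4]{AE:18} or \cite[Lemma 1.2]{Gre:87}, applied to the pair $(\bBm,\mathrm L)$. It gives, for all $\lambda\in\rho(\bBn)$,
\[
\rL_\lambda^{\bB}=\bigl(1+(\lambda_0-\lambda)(\lambda-\bBn)^{-1}\bigr)\rL_{\lambda_0}^{\bB}.
\]
The hypotheses to verify are that $\mathrm L$ is surjective from $\mathrm{D}(\bBm)$ onto $\rW^{1-\frac1q,q}(\Gamma)^3$, and that $\bBn=\bBm|_{\ker\mathrm L}$ holds (this is true by the definition of $\mathrm{D}(\bBn)$, together with the Dirichlet conditions on $\Gammap,\Gammaf$ built into $\mathrm{D}(\Lm)$ etc.). Surjectivity follows from the first step, or from standard trace extension results. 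The only point needing care is that the formula produces functions with the stated regularity; this holds because $(\lambda-\bBn)^{-1}$ maps $\Xn$ into $\mathrm{D}(\bBn)$.
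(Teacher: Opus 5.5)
Your proposal is correct and follows the paper's proof. The paper also reduces, via \cite[Lemma 3.4]{AE:18} or \cite[Lemma 1.2]{Gre:87}, to a single $\lambda_0\in\rho(\bBn)\cap\rho(\Gn)\cap\rho(\Deltan)\cap\rho(\An)$, and there takes $\rL^{\bB}_{\lambda_0}=\rL^{\bD}_{\lambda_0}+R(\lambda_0,\bBn)\bQ\rL^{\bD}_{\lambda_0}$. This is exactly your $\mathbf w_1-\mathbf w_2$, since $\mathbf f=(\lambda_0-\bBm)\mathbf w_1=-\bQ\mathbf w_1$, the only difference being that you verify the formula by hand where the paper cites \cite[Lemma 4.6]{BE:18} (your phrase ``lower block-triangular structure'' is a misnomer because of the entry $-\alpha\nabla$ above the diagonal, but the argument never uses triangularity).
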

	\begin{proof}
		Using \cite[Lemma 3.4]{AE:18} or \cite[Lemma 1.2]{Gre:87} it is sufficient to show that $\rL_{\lambda_0}$ exists and is bounded for some $\lambda_0 \in \rho(\bBn)$. 
		We consider $\lambda_0 \in \rho(\bBn) \cap \rho(\Gn) \cap \rho(\Deltan) \cap \rho(\An)$. {The existence of such an element will be justified in \autoref{sec:spectral theory}.}
		Next, we split the operator $\bA_m$ from \eqref{eq:tbAm} into
		\begin{equation*}
			\bBm
			= 
			\bDm + \bQ
			\coloneqq 
			\begin{pmatrix}
				\Gm & 0 & 0 \\
				 0 & k \Deltam & 0 \\
				0 & 0 & \Am 
			\end{pmatrix}
			+
			\begin{pmatrix}
				0 & \binom{0}{-\alpha \nabla} & 0 \\
				(\delta^{-1}\alpha \div, -\alpha \div) & 0 & 0 \\
				(0,0) & 0 & 0 
			\end{pmatrix}
		\end{equation*}
		We define the combined solution operator  $\mathbf{L}^{\bD}_\lambda	
		:=  \diag(\rL^{\Gm}_\lambda,\rL^{\Deltam}_\lambda,\rL^{\Am}_\lambda)$ and note that it is the solution operator of 
		\begin{equation*}
			{\textbf{D}}_{\mathrm{m}} \mathbf{w}= \lambda \mathbf{w}, \qquad \mathrm{L} \mathbf{w}= (\varphi,\eta,\psi)^\top .
		\end{equation*}
		Since, 
		\begin{align*} 
		\mathrm{D}({\textbf{D}}_m) := \mathrm{D}(\bBm) &= \rH^{1,q}_{\Gammap}(\Omegap) \times \rH^{2,q}_{\Gammap}(\Omegap) \times \rH^{2,q}_{\Gammap}(\Omegap) \times (\rH^{2,q}_{\Gammaf}(\Omegaf) \cap \rL^q_{\sigma}(\Omegaf)) \\
		&\hookrightarrow
		\rH^{1,q}(\Omegap) \times 
		\rH^{1,q}(\Omegap) \times 
		\rH^{1,q}(\Omegap) \times 
		(\rH^{1,q}(\Omegaf)\cap \rL^q_{\sigma}(\Omegaf))
		\end{align*} 
		we see that $\bQ$ is relatively $\bDm$-bounded. Now \cite[Lemma 4.6]{BE:18} yields the existence and boundedness of $\rL^{\bB}_\lambda$ given by
		\begin{equation}
			\rL_\lambda^{\bB} 
			= \rL_\lambda^{\bD} + R(\lambda,\bB_0) \bQ \rL_\lambda^{\bD}
			\label{eq:formula Llambda}
			\qedhere 
		\end{equation}
	\end{proof}
	For $\bw= (\up, \vp, \pp,\uf)^{\top}$ we introduce the boundary operator
	\begin{equation*}
		\Phi(\bw)
		:= 
		\begin{pmatrix}
			(\lambdap+2\mup)^{-1}[ (\alpha-1) \pp n_\Gamma + \beta\delta^{-1} \sum_{l=1}^2 (\vp \cdot t_\Gamma^l)t_\Gamma^l 
			- \beta\delta^{-1} \sum_{l=1}^2 (\up \cdot t_\Gamma^l)t_\Gamma^l - \beta \sum_{l=1}^2 (\uf \cdot t_\Gamma^l)t_\Gamma^l]
			\\
			k^{-1}\delta^{-1}( \vp  -  \up) n_\Gamma - k^{-1} \uf n_\Gamma
			\\	
			- \pp n_\Gamma + \beta\delta^{-1} \sum_{l=1}^2 (\vp \cdot t_\Gamma^l)t_\Gamma^l 
			- \beta\delta^{-1} \sum_{l=1}^2 (\up \cdot t_\Gamma^l)t_\Gamma^l - \beta \sum_{l=1}^2 (\uf \cdot t_\Gamma^l)t_\Gamma^l 		
		\end{pmatrix}
			.
	\end{equation*}
	
	Using these operators we obtain the following representation.
	
	\begin{prop}\label{lem:representation}
		We have
		\begin{equation*}
			\bB
			= (\bB_{-\frac{1}{2}} + (\lambda-\bA_{-1}) \mathbf{L}^{\bB}_\lambda \Phi)|_{\Xn}, 
		\end{equation*}
		for each $\lambda \in \rho(\bBn)$,
		where $\bB_{-1}$ denotes the extrapolated operator associated to $\bBn$ and $\bB_{-\frac{1}{2}} = \bB_{-1}|_{\bZ_{-\frac{1}{2}}}$.
	\end{prop}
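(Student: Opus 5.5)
This is the standard Greiner-type representation of an operator with inhomogeneous boundary conditions, as in Engel's and Adler--Bombieri--Engel's papers on boundary perturbations. We read $\bA_{-1}$ in the statement as $\bB_{-1}$, the extrapolation of $\bBn$. The plan is to show two things. First, the operator $\mathcal{C} := (\bB_{-\frac12} + (\lambda-\bB_{-1})\mathbf{L}^{\bB}_\lambda\Phi)|_{\Xn}$ has domain $\mathrm{D}(\bB)$. Second, $\mathcal{C}$ agrees with $\bBm$ there. Here the part in $\Xn$ is understood as usual:
\[
\mathrm{D}(\mathcal{C}) = \{\bw \in \bZ_{\frac12} : \bB_{-\frac12}\bw + (\lambda-\bB_{-1})\mathbf{L}^{\bB}_\lambda\Phi\bw \in \Xn\}.
\]
We need $\Phi$ to be defined on $\bZ_{\frac12}$, or on a space slightly larger than $\mathrm{D}(\bB)$, with values in the boundary space on which $\mathbf{L}^{\bB}_\lambda$ acts. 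This should follow from the trace theorem and \autoref{lem:interpolation spaces A0}: $\Phi$ involves only zeroth-order traces of $\vp,\up,\pp,\uf$. We also use that $\mathbf{L}^{\bB}_\lambda$ from \autoref{lem:L0_real}, composed with the embedding, maps into $\bZ_{\frac12}$.

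\emph{Step 1: key identity.} Take $\bw \in \mathrm{D}(\bBm)$ and set $\bw_0 := \bw - \mathbf{L}^{\bB}_\lambda \Phi(\bw)$. If $\bw$ satisfies \eqref{eq:bjs'} and the Dirichlet conditions, then $\mathrm{L}\bw = \Phi(\bw)$. Hence $\mathrm{L}\bw_0 = 0$, because $\mathrm{L}\mathbf{L}^{\bB}_\lambda = \Id$ by \autoref{lem:L0_real}. Therefore $\bw_0 \in \mathrm{D}(\bBn)$. Since $\bBm \mathbf{L}^{\bB}_\lambda = \lambda \mathbf{L}^{\bB}_\lambda$, we get
\[
(\lambda - \bBm)\bw = (\lambda-\bBn)\bw_0 = (\lambda-\bB_{-1})\bw_0 = (\lambda-\bB_{-1})\bw - (\lambda-\bB_{-1})\mathbf{L}^{\bB}_\lambda\Phi\bw.
\]
Rearranging gives $\bBm\bw = \bB_{-1}\bw + (\lambda-\bB_{-1})\mathbf{L}^{\bB}_\lambda \Phi\bw$, and the left side lies in $\Xn$. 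Since $\bw\in\bZ_{\frac12}$, $\bB_{-1}\bw=\bB_{-\frac12}\bw$. This proves $\bB\subset\mathcal{C}$.

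\emph{Step 2: reverse inclusion.} Let $\bw\in\mathrm{D}(\mathcal{C})$ and set $\bw_0:=\bw-\mathbf{L}^{\bB}_\lambda\Phi\bw$. Then
\[
(\lambda-\bB_{-1})\bw_0 = \lambda\bw - \mathcal{C}\bw \in \Xn.
\]
So $\bw_0 = R(\lambda,\bB_{-1})(\lambda\bw-\mathcal{C}\bw) \in \mathrm{D}(\bBn)$, and $\bw = \bw_0 + \mathbf{L}^{\bB}_\lambda\Phi\bw$. The first summand lies in $\mathrm{D}(\bBn)$ and the second in the range of the solution operator, so $\bw \in \mathrm{D}(\bBm)$. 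Moreover $\mathrm{L}\bw = 0 + \Phi(\bw)$, which is exactly \eqref{eq:bjs'}. The Dirichlet conditions on $\Gammap,\Gammaf$ hold for both summands, so $\bw\in\mathrm{D}(\bB)$. Step 1 then shows that $\mathcal{C}\bw=\bB\bw$.

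\emph{Main obstacle.} The delicate point is the first component $\up$. It lies only in $\rH^{1,q}$, yet $\Phi$ takes its tangential trace. In Step 2 we do not know a priori that $\bw$ has more regularity than $\bZ_{\frac12}$, whose first component is $\rH^{1,q}(\Omegap)$. The trace of $\up$ in $\rW^{1-\frac1q,q}(\Gamma)$ is fine, since $\rH^{1,q}$ has this trace space. So $\Phi:\bZ_{\frac12}\to (\rW^{1-\frac1q,q}(\Gamma))^3$ is bounded, and the pressure and velocity traces are even better. One must also check that $\mathbf{L}^{\bB}_\lambda$ really maps into $\bZ_{\frac12}$; the first component lies in $\rH^{1,q}$, as stated in \autoref{lem:L0_real}. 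Then $(\lambda-\bB_{-1})\mathbf{L}^{\bB}_\lambda\Phi$ is well defined into $\bZ_{-\frac12}$ and the compositions make sense. Independence of the choice of $\lambda\in\rho(\bBn)$ follows from the argument itself, since nothing in it depends on $\lambda$. One can also see it from the resolvent identity $\mathbf{L}^{\bB}_\lambda-\mathbf{L}^{\bB}_\mu=(\mu-\lambda)R(\lambda,\bBn)\mathbf{L}^{\bB}_\mu$.
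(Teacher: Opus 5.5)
Your proof is correct and is essentially the paper's argument: the paper also shows $\bw\in\mathrm{D}(\mathcal{C})\iff(\mathbf{L}^{\bB}_\lambda\Phi-\Id)\bw\in\mathrm{D}(\bBn)\iff\mathrm{L}\bw=\Phi\bw$, and then identifies the action using $(\lambda-\bBm)\mathbf{L}^{\bB}_\lambda\Phi\bw=0$ and $\bBn\subset\bBm$, so your two inclusions are an unfolded version of its chain of equivalences. As in the paper, getting $\up|_{\Gammap}=0$ for the summand $\bw_0\in\mathrm{D}(\bBn)$ tacitly requires reading the first component of $\Xn$ and $\mathrm{D}(\bBn)$ as $\rH^{1,q}_{\Gammap}(\Omegap)$, which is how it is taken in \autoref{lem:G0 spectrum}.
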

	\begin{proof}
		 We follow the lines of the proof of \cite[Lemma 3.4]{BHR:24}. 
		 Let us set $\bar{\bB}=(\bB_{-\frac{1}{2}} + (\lambda-\bB_{-1}) \mathbf{L}^{\bB}_\lambda \Phi)|_{\Xn}$ and fix $\lambda \in \rho(\bBn)$. As $\bB_{-\frac{1}{2}} \bw=\bB_{-1}\bw$ for $\bw\in \bZ_{\frac{1}{2}}$, we have
		\begin{align*}
			\bw\in \mathrm{D}(\bar{\bB}) \qquad 
			&\Longleftrightarrow \qquad 
			(\lambda-\bB_{-1}) (\mathbf{L}_\lambda^{\bB} \Phi-\Id) \bw\in \Xn \\
			&\Longleftrightarrow \qquad (\mathbf{L}_\lambda^{\bB} \Phi-\Id) \bw \in  \mathrm{D}(\bBn) = \ker(\mathbf{L}) \\
			&\Longleftrightarrow \qquad  \mathbf{L} \bw= {\Phi} \bw,
		\end{align*}
		where we used in the last line that $\bw = (\mathrm{Id}-\mathbf{L}^{\bB}_\lambda \Phi)\bw+\mathbf{L}^{\bB}_\lambda \Phi \bw\in \mathrm{D}(\bBn) + \ker(\lambda-\bBm) \subseteq \mathrm{D}(\bBm)$ such that
		\begin{equation*}
			0 = \mathbf{L}(\mathbf{L}^{\bB}_\lambda {\Phi} \bw- \bw)
			= {\Phi} \bw -  \mathbf{L} \bw.
		\end{equation*}
		Furthermore, for $\bw\in \mathrm{D}(\bBn)$ we obtain
		\begin{align*}
			\bar{\bB}
			\bw
			&= 
			(\bA_{-1} + (\lambda-\bB_{-1}) \mathbf{L}^{\bB}_\lambda \Phi) \bw\\
			&= 
			((\lambda-\bB) (\mathbf{L}^{\bB}_\lambda \Phi - \Id) \bw+ \lambda \bw
			\\
			&= 
			((\lambda-\bBm) (\mathbf{L}^{\bB}_\lambda \Phi - \Id) \bw+ \lambda \bw
			\\
			&= 
			-(\lambda-\bBm) \bw+ \lambda \bw
			= \bBm \bw = \bB \bw,
		\end{align*}
		where we used $(\lambda-\bBm)\mathbf{L}^{\bB}_\lambda \Phi \bw= 0$ and $\bB \subset \bBm$.
	\end{proof}
	
%
	
	We finish this section with the characterization of the interpolation spaces which follows by \autoref{lem:representation} and the results in \cite{BE:25}.


	\begin{lem}\label{lem:interpolation spaces similarity}			
Let $\Ye := \mathrm{D}(\bB)$ with the graph norm 
		and set $\Yn := \Xn$. 
			For  $\theta \in (0,1)$ and  $r \in (1,\infty)$ the real interpolation spaces $\bY_{\theta,r} = (\Yn,\Ye)_{\theta,r}$ are given by
			\begin{equation*}
				\bY_{\theta,r}
				= 
				\left\{
				\begin{aligned}
				\rH^{1,q}(\Omegap) \times \rB^{2\theta}_{q,r,\Gammap}(\Omegap) \times \rB^{2\theta}_{q,r,\Gammap}(\Omegap) \times \bigl( \rB^{2\theta}_{q,r,\Gammaf}(\Omegaf) \cap \rL^q_{\sigma}(\Omegaf) \bigr) ,
				&\qquad \text{ if } \theta \in (-1,\tfrac{1}{2}+\tfrac{1}{2q}), \\
				\left\{ \begin{aligned} 
					(\up,\vp,\pp,\uf)^\top \in \rH^{1,q}(\Omegap) \times \rB^{2\theta}_{q,r,\Gammap}(\Omegap) \times \rB^{2\theta}_{q,r,\Gammap}(\Omegap)\qquad \phantom{a} \\  \times \bigl( \rB^{2\theta}_{q,r,\Gammaf}(\Omegaf) \cap \rL^q_{\sigma}(\Omegaf) \bigr)
				\colon (\up,\vp,\pp,\uf) \text{ satisfy } \eqref{eq:bjs'}
				\end{aligned} \right\},
				&\qquad \text{ if } \theta \in (\tfrac{1}{2}+\tfrac{1}{2q},1).
				\end{aligned} 
				\right. 
			\end{equation*}
	\end{lem}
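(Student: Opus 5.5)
The plan is to obtain $\bY_{\theta,r}$ from the known spaces $\bZ_{\theta,r}$ of \autoref{lem:interpolation spaces A0}, using the perturbation representation of \autoref{lem:representation}. This places us in the abstract framework of \cite{BE:25}: interpolation spaces of a Desch--Schappacher/boundary perturbation $\bB=(\bB_{-\frac12}+(\lambda-\bB_{-1})\mathbf{L}^{\bB}_\lambda\Phi)|_{\Xn}$ of $\bBn$. Roughly, that framework says the following.
\begin{enumerate}[(a)]
\item For $\theta$ below the critical index, $\bY_{\theta,r}=\bZ_{\theta,r}$.
\item For $\theta$ above the critical index, $\bY_{\theta,r}$ is the subspace of the unconstrained interpolation space cut out by the perturbed boundary relation $\mathrm{L}\bw=\Phi\bw$.
\end{enumerate}
So the proof reduces to checking the hypotheses of the abstract result and identifying the critical index with $\tfrac12+\tfrac1{2q}$.

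First I fix $\lambda\in\rho(\bBn)$; its existence is deferred to \autoref{sec:spectral theory}, as in \autoref{lem:L0_real}. I then check the two mapping properties that \cite{BE:25} requires.

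\textbf{Boundedness of $\Phi$.} $\Phi$ involves only traces of $\up,\vp,\pp,\uf$ on $\Gamma$, and no derivatives. It therefore extends to a bounded operator $\Phi\colon \bZ_{\theta,r}\to \rW^{1-\frac1q,q}(\Gamma)^3$ once the trace of the $\rB^{2\theta}_{q,r}$ and $\rH^{1,q}$ components lands in $\rW^{1-\frac1q,q}(\Gamma)$. For the $\up$-component this holds because $\up\in\rH^{1,q}(\Omegap)$. For the other components it holds as soon as $2\theta$ is large enough; in fact some smaller $\theta$ suffices, since $\Phi$ needs only regularity $1$.

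\textbf{Regularity of the lifting.} $\mathbf{L}^{\bB}_\lambda$ maps $\rW^{1-\frac1q,q}(\Gamma)^3$ into $\rH^{1,q}\times\rH^{2,q}\times\rH^{2,q}\times\rH^{2,q}_\sigma$ by \autoref{lem:L0_real}. This range lies in every $\bZ_{\theta,r}$ with $\theta<\tfrac12+\tfrac1{2q}$, because below that index no Neumann-type condition is encoded.

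With these two properties the abstract result applies. Below the threshold, the extrapolated term $(\lambda-\bB_{-1})\mathbf{L}^{\bB}_\lambda\Phi$ maps $\bZ_{\theta,r}$ into $\bZ_{\theta-1,r}$ with small relative bound. Hence the fractional domains of $\bB$ and $\bBn$ coincide, giving the first case of the statement. Above the threshold, the same argument shows that $\bY_{\theta,r}$ consists of those $\bw$ in the unconstrained Besov product for which $(\Id-\mathbf{L}^{\bB}_\lambda\Phi)\bw\in\bZ_{\theta,r}$. Since $\bZ_{\theta,r}$ then imposes $\varepsilon(\vp)n_\Gamma=0$, $\partial_{n_\Gamma}\pp=0$ and $\sigmaf n_\Gamma=0$, this is equivalent to $\mathrm{L}\bw=\Phi\bw$, i.e.\ \eqref{eq:bjs'}. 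The Dirichlet conditions on $\Gammap,\Gammaf$ are unaffected by the perturbation and carry over directly from $\bZ_{\theta,r}$. Finally, the $\up$-component is $\rH^{1,q}(\Omegap)$ in both cases. This follows because the first row and column of $\bBn$ act boundedly on $\rH^{1,q}$ (the block $-\delta^{-1}$ is bounded), so the domain does not improve $\up$.

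The main obstacle is the $\up$-dependence of $\Phi$. $\Phi$ contains tangential traces of $\up$, which only lives in $\rH^{1,q}(\Omegap)$. Its trace lies exactly in $\rW^{1-\frac1q,q}(\Gamma)$, which is the borderline space for the lifting. I would check carefully that this borderline regularity suffices both for \autoref{lem:L0_real} and for the smallness/compatibility condition in \cite{BE:25}. If it does not, I would split $\Phi=\Phi_1+\Phi_2$, where $\Phi_2$ collects the $\up$-terms, and treat $\Phi_2$ as a bounded perturbation $\Xn\to\rW^{1-\frac1q,q}(\Gamma)$. Such a perturbation does not change the interpolation spaces below the threshold, and above the threshold it only enters through the boundary relation.
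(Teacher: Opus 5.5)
Your proposal is correct and follows the same route as the paper. The paper obtains this lemma directly from the boundary-perturbation representation in \autoref{lem:representation}, the unperturbed spaces of \autoref{lem:interpolation spaces A0}, and the abstract interpolation results of \cite{BE:25}. The threshold $\tfrac12+\tfrac1{2q}$ comes from the Neumann-type traces, exactly as you identify.

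Your concern about the $\up$-terms in $\Phi$ is unnecessary. The trace $\rH^{1,q}(\Omegap)\to\rW^{1-\frac1q,q}(\Gamma)$ is bounded, so that part of $\Phi$ is already bounded on $\Xn$ itself and is the most harmless piece of the perturbation.
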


	We conclude with the following result for the interpolation spaces of $\bB$.
	
		\begin{cor}\label{lem:interpolation spaces}			
		Let $\Xe := \mathrm{D}({\bA})$ with the graph norm.
		For  $\theta \in (0,1)$ and  $r \in (1,\infty)$ the real interpolation spaces $\bX_{\theta,r} = (\Xe,\Xn)_{\theta,r}$ are given by
			\begin{equation*}
			\bX_{\theta,r}
			= 
			\left\{
			\begin{aligned}
				\rH^{1,q}(\Omegap) \times 	\rB^{2\theta}_{q,r,\Gammap}(\Omegap) \times \rB^{2\theta}_{q,r,\Gammap}(\Omegap) \times \bigl( \rB^{2\theta}_{q,r,\Gammaf}(\Omegaf) \cap \rL^q_{\sigma}(\Omegaf) \bigr) ,  
				&\qquad \text{ if } \theta \in 	(0,\tfrac{1}{2}), \\
				\rH^{1,q}(\Omegap) \times 
				\rB^{1}_{q,r,\Gammap}(\Omegap) \times 	\rB^{1}_{q,r,\Gammap}(\Omegap) \times \bigl( \rB^{1}_{q,r,\Gammaf}(\Omegaf) \cap \rL^q_{\sigma}(\Omegaf) \bigr), &\qquad \text{ if } \theta = \frac{1}{2}, r \leq q, \\
				\rH^{1,q}(\Omegap) \times 
				\rH^{1,q}_{\Gammap}(\Omegap) \times 	\rB^{1}_{q,r,\Gammap}(\Omegap) \times \bigl( \rB^{1}_{q,r,\Gammaf}(\Omegaf) \cap \rL^q_{\sigma}(\Omegaf) \bigr), &\qquad \text{ if } \theta = \frac{1}{2}, q\leq r, \\
				\rH^{1,q}(\Omegap) \times 	\rH^{1,q}_{\Gammap}(\Omegap) \times \rB^{2\theta}_{q,r,\Gammap}(\Omegap) \times \bigl( \rB^{2\theta}_{q,r,\Gammaf}(\Omegaf) \cap \rL^q_{\sigma}(\Omegaf) \bigr) ,
				&\qquad \text{ if } \theta \in 	(\tfrac{1}{2},\tfrac{1}{2}+\tfrac{1}{2q}), \\
				\left\{ \begin{aligned} 
					(\up,\vp,\pp,\uf)^\top \in \rH^{1,q}(\Omegap) \times \rH^{1,q}_{\Gammap}(\Omegap) 	\times \rB^{2\theta}_{q,r,\Gammap}(\Omegap)\qquad \phantom{a} \\  \times \bigl( \rB^{2\theta}_{q,r,\Gammaf}(\Omegaf) \cap \rL^q_{\sigma}(\Omegaf) \bigr)
					\colon \\ \up+\delta \vp \in 	\rH^{2\theta}_{q,r,\Gammap}(\Omegap) \text{ and }
					(\up,\vp,\pp,\uf) \text{ satisfy } 	\eqref{eq:bjs}
				\end{aligned} \right\}, 
				&\qquad \text{ if } \theta \in 	(\tfrac{1}{2}+\tfrac{1}{2q},1).
			\end{aligned} 
			\right. 
	\end{equation*}
	\end{cor}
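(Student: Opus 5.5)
The plan is to transfer the interpolation result for $\bB$ back to $\bA$ via the similarity $\bB = \bS\bA\bS^{-1}$. Since $\bS$ is an isomorphism of $\Xn$, we have $\mathrm{D}(\bA) = \bS^{-1}\mathrm{D}(\bB)$, with equivalent graph norms, because $\|\bA \bw\| + \|\bw\| \simeq \|\bB \bS\bw\| + \|\bS\bw\|$. Hence $\bS^{-1}$ maps the couple $(\Yn,\Ye)$ isomorphically onto $(\Xn,\Xe)$. By functoriality of the real interpolation method,
\begin{equation*}
\bX_{\theta,r} = \bS^{-1} \bY_{\theta,r} = \{ (\up,\vp,\pp,\uf) : (\up, \delta^{-1}\up+\vp, \pp, \uf) \in \bY_{\theta,r} \}
\end{equation*}
with equivalent norms. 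So the whole proof reduces to unwinding this identity with the description of $\bY_{\theta,r}$ from \autoref{lem:interpolation spaces similarity}.

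Next I go through the ranges of $\theta$. Write $\tilde v := \delta^{-1}\up + \vp = \delta^{-1}(\up + \delta\vp)$. Membership in $\bX_{\theta,r}$ means $\up \in \rH^{1,q}(\Omegap)$, $\tilde v \in \rB^{2\theta}_{q,r,\Gammap}(\Omegap)$, and $\pp,\uf$ lie in the stated spaces. Then $\vp = \tilde v - \delta^{-1}\up$, so $\vp$ lies in the sum $\rH^{1,q} + \rB^{2\theta}_{q,r}$.

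For $\theta<\tfrac12$ we have $\rH^{1,q}\hookrightarrow \rB^{2\theta}_{q,r}$, so $\vp \in \rB^{2\theta}_{q,r}$ and the converse inclusion is immediate. No trace condition on $\Gammap$ appears for $2\theta < 1/q$. For $2\theta>1/q$, I will have to check the homogeneous condition: since $\up\in\rH^{1,q}_{\Gammap}$ when the domain includes $\up|_{\Gammap}=0$, the trace of $\vp$ vanishes if and only if that of $\tilde v$ does.

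For $\theta>\tfrac12$ the roles swap: $\rB^{2\theta}_{q,r}\hookrightarrow \rH^{1,q}$, so $\vp\in\rH^{1,q}_{\Gammap}$, while the extra regularity is carried only by $\up+\delta\vp$. For $\theta = \tfrac12$ one compares $\rH^{1,q}$ with $\rB^1_{q,r}$. If $r\le q$, then $\rB^1_{q,r}\hookrightarrow \rH^{1,q}$. If $q\le r$, the embedding goes the other way, $\rH^{1,q}\hookrightarrow\rB^1_{q,r}$. This yields the two cases in the statement (as written there), using the standard Besov--Bessel embeddings.

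For $\theta > \tfrac12+\tfrac1{2q}$ the set additionally carries the transformed interface conditions \eqref{eq:bjs'} for $\bS\bw$. The substitution $\vp \mapsto \delta^{-1}\up+\vp$ turns \eqref{eq:bjs'} back into \eqref{eq:bjs}, exactly as in the derivation of \eqref{eq:bjs'} from \eqref{eq:X1}, read in reverse. One must make sure these traces are meaningful: $\varepsilon(\up+\delta\vp)n_\Gamma$ needs $\up+\delta\vp\in\rB^{2\theta}$ with $2\theta>1+1/q$, and $\vp$ appears only in zeroth-order traces, which are fine for $\rH^{1,q}$.

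The main obstacle is the bookkeeping at the borderline and intermediate cases, namely $\theta=\tfrac12$ and $\theta\in(\tfrac12,\tfrac12+\tfrac1{2q})$. There one must verify that the sum space $\rH^{1,q}+\rB^{2\theta}_{q,r}$ collapses to the larger of the two with the correct boundary condition on $\Gammap$. One must also check that the condition $\up+\delta\vp\in\rB^{2\theta}$ is genuinely retained in the statement, rather than only the weaker regularity of $\vp$ alone.
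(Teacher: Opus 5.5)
Your route, transporting \autoref{lem:interpolation spaces similarity} through $\bS^{-1}$, is how the paper obtains the corollary; it gives no further argument. Your treatment of $\theta<\frac{1}{2}$ and $\theta>\frac{1}{2}+\frac{1}{2q}$ is fine. The gap is in the range $\theta\in[\frac{1}{2},\frac{1}{2}+\frac{1}{2q})$, which you call the main obstacle but then settle by assertion. The embeddings you quote are false in general. Since $\rH^{1,q}=\mathrm{F}^{1}_{q,2}$, one has $\rB^{1}_{q,r}\hookrightarrow\rH^{1,q}$ if and only if $r\le\min\{q,2\}$, and $\rH^{1,q}\hookrightarrow\rB^{1}_{q,r}$ if and only if $r\ge\max\{q,2\}$; for $q=4$, $r=3$ neither holds. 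Even granting your embeddings, your own sum-space reasoning gives the opposite assignment of the two $\theta=\frac{1}{2}$ cases. If $\rB^1_{q,r}\hookrightarrow\rH^{1,q}$, then $\vp=\tilde v-\delta^{-1}\up$ ranges over the larger space $\rH^{1,q}_{\Gammap}(\Omegap)$. If $\rH^{1,q}\hookrightarrow\rB^1_{q,r}$, it ranges over $\rB^1_{q,r,\Gammap}(\Omegap)$. So ``this yields the two cases in the statement'' is not true.

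More fundamentally, knowing the range of the $\vp$-component is not enough. For fixed $\up\in\rH^{1,q}_{\Gammap}(\Omegap)$ the admissible $\vp$ form the coset $-\delta^{-1}\up+\rB^{2\theta}_{q,r,\Gammap}(\Omegap)$. Hence $\bS^{-1}\bY_{\theta,r}$ is a product in $(\up,\vp)$ if and only if $\rH^{1,q}_{\Gammap}(\Omegap)\subset\rB^{2\theta}_{q,r,\Gammap}(\Omegap)$. That happens exactly for $\theta<\frac{1}{2}$, or for $\theta=\frac{1}{2}$ with $r\ge\max\{q,2\}$. For $\theta\in(\frac{1}{2},\frac{1}{2}+\frac{1}{2q})$ the product displayed in the statement is strictly larger than $\bX_{\theta,r}$. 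It contains $(0,\vp,0,0)$ with $\vp\in\rH^{1,q}_{\Gammap}(\Omegap)\setminus\rB^{2\theta}_{q,r}(\Omegap)$, whereas $\bS(0,\vp,0,0)=(0,\vp,0,0)\notin\bY_{\theta,r}$. So the check you postpone, that the condition on $\up+\delta\vp$ is retained, fails for the statement as printed, and no bookkeeping will close it.

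What your argument actually proves is this: for $\theta\in[\frac{1}{2},\frac{1}{2}+\frac{1}{2q})$, $\bX_{\theta,r}$ consists of all $(\up,\vp,\pp,\uf)$ with $\up\in\rH^{1,q}_{\Gammap}(\Omegap)$, $\up+\delta\vp\in\rB^{2\theta}_{q,r,\Gammap}(\Omegap)$, and $\pp,\uf$ as in the other cases. You should state that version. It reduces to the product $\rH^{1,q}_{\Gammap}\times\rB^{1}_{q,r,\Gammap}\times\rB^{1}_{q,r,\Gammap}\times(\rB^{1}_{q,r,\Gammaf}\cap\rL^q_\sigma)$ only when $\theta=\frac{1}{2}$ and $r\ge\max\{q,2\}$.

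A smaller point: your trace argument on $\Gammap$ uses $\up|_{\Gammap}=0$, while the first factor is written as $\rH^{1,q}(\Omegap)$. Justify this by noting two facts. First, $\Xe$ lies in the closed subspace $\rH^{1,q}_{\Gammap}(\Omegap)\times\rL^q(\Omegap)\times\rL^q(\Omegap)\times\rL^q_\sigma(\Omegaf)$ of $\Xn$. Second, for $r<\infty$, $\bX_{\theta,r}$ is contained in the $\Xn$-closure of $\Xe$.
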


	\section{Spectral theory}
	\label{sec:spectral theory}
	
	In this section we deal with the spectral theory of the operator $\bA$.
	Due to the (damped) hyperbolic nature of the Biot equation the resolvent of $\bA$ cannot be compact and hence it may has essential spectrum. 
	The main difficulty in this section is to locate the essential spectrum. 
	This part is divided in several steps. 
	
	\medskip 
	
	We point out that the spectra of $\bA$ and $\bB$ coincide. Our aim is to show that the spectral bound $s(\bB)$ of $\bB$ and hence $s({\bA})$ of ${\bA}$ is strictly negative. 
	
	\begin{thm}\label{thm:spectrum A}
		There exists a constant $\omega < 0$ such that $\sigma(\bA) \subset \{ \lambda \in \C \colon \re \lambda \leq \omega \}$.
		In particular, the spectral bound of $\bA$ is strictly negative, i.e. $s(\bA) < 0$. 
	\end{thm}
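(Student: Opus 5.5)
Since $\bA$ and $\bB=\bS\bA\bS^{-1}$ are similar, I will work with $\bB$. The plan has three stages. First, locate the essential part of the spectrum through the reference operator $\bBn$. Second, show that the remaining spectrum consists of isolated eigenvalues of finite multiplicity. Third, exclude eigenvalues with $\re\lambda\ge 0$ by an energy identity. Throughout I take $q=2$: spectra of these elliptic operator matrices on smooth bounded domains are independent of $q$ by standard regularity and bootstrap arguments, so the Hilbert space case suffices.

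\emph{Step 1 (reference operator).} I would write $\bBn=\bDn+\bQ$ with $\bDn=\diag(\Gn,k\Deltan,\An)$. The operators $\Deltan$ and $\An$ have compact resolvents and strictly negative spectra (Dirichlet conditions on $\Gammap$ and $\Gammaf$ respectively, with Poincaré/Korn). For $\Gn$ I would use the observation from the introduction: $\Gn=M((-\Ln)^{1/2})$ with the $2\times 2$ symbol $M(\alpha)$. Then $\lambda\in\rho(\Gn)$ exactly when $\det(\lambda-M(\alpha))\neq 0$ uniformly on $\sigma((-\Ln)^{1/2})\cup\{\infty\}$. This follows by functional calculus for the self-adjoint $\Ln$, after conjugating the $\rH^{1,q}$ component with $(-\Ln)^{1/2}$. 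One computes $\det(\lambda-M(\alpha))=\lambda^2+\delta\alpha^2\lambda+\alpha^2$. Its roots have negative real part for every $\alpha>0$, and as $\alpha\to\infty$ one root tends to $-\delta^{-1}$ while the other tends to $-\infty$. Hence $\sigma(\Gn)$ consists of the eigenvalue pairs together with the accumulation point $-\delta^{-1}$, which is the essential spectrum, and $\sigma(\Gn)\subset\{\re\lambda\le\omega_0\}$ for some $\omega_0<0$, with uniform resolvent bounds on $\{\re\lambda>\omega_0\}$.

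\emph{Step 2 (Fredholm reduction).} On the one hand, the coupling $\bQ$ involves $\nabla\pp$ and $\div\vp$. Because $\mathrm{D}(\Deltan)$ embeds compactly into $\rH^{1,q}$, and $\vp$ gains regularity (cf.\ the remark after \autoref{lem:L0}), the coupling is relatively compact with respect to $\bDn$ on the non-Biot components. On the other hand, $\bB$ differs from $\bBn$ by the boundary perturbation of \autoref{lem:representation}, whose resolvent difference is $\mathbf L^{\bB}_\lambda\Phi R(\lambda,\bB)$. Since $\Phi$ only involves traces of $\up,\vp,\pp,\uf$ and $\mathbf L^{\bB}_\lambda$ maps into $\rH^2$-type spaces, this difference is compact for $\lambda$ large. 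The existence of a common resolvent point, which \autoref{lem:L0_real} relies on, also follows from this step for large real $\lambda$. Weyl's theorem (or analytic Fredholm theory) then gives $\sigma_{\mathrm{ess}}(\bB)=\sigma_{\mathrm{ess}}(\Gn)=\{-\delta^{-1}\}$. Every other spectral point is an isolated eigenvalue, and eigenvalues can only accumulate at $-\delta^{-1}$ or at infinity within a left sector; the latter follows from the asymptotics in Step 1 together with sectoriality of the parabolic part.

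\emph{Step 3 (no eigenvalues with $\re\lambda\ge 0$).} Going back to $\bA$, suppose $\bA\bw=\lambda\bw$. Then $\vp=\lambda\up$, and I test the momentum equation with $\bar\vp$, the pressure equation with $\bar\pp/\alpha$ suitably scaled, and the Stokes equation with $\bar\uf$. Integrating by parts and using \eqref{eq:bjs} makes the interface terms cancel, leaving the Beavers--Joseph--Saffman dissipation $\beta\sum_l|(\uf-\vp)\cdot t^l|^2$. Taking real parts yields
\[
\re\lambda\,\bigl(\|\vp\|^2+\|\pp\|^2+\|\uf\|^2\bigr)+\re\bar\lambda\,\|\varepsilon(\up)\|^2_{\lambdap,\mup}+\delta\|\varepsilon(\vp)\|^2+k\|\nabla\pp\|^2+2\|\varepsilon(\uf)\|^2+\beta\|\cdot\|^2_\Gamma=0,
\]
with all terms except possibly the first two nonnegative. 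For $\re\lambda\ge 0$ every term is nonnegative, so $\varepsilon(\vp)=0$, $\nabla\pp=0$ and $\varepsilon(\uf)=0$. The Dirichlet conditions together with Korn's inequality then give $\vp=\pp=\uf=0$. If $\lambda\neq 0$ this forces $\up=0$. If $\lambda=0$, the equations reduce to $\Lm\up=0$ with traction-free data, whence $\up=0$ again.

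Combining the three steps, $\sigma(\bB)\cap\{\re\lambda\ge\omega\}=\emptyset$ for some $\omega<0$: no eigenvalue lies on $i\R$, eigenvalues accumulate only in $\{\re\lambda<0\}$, and the essential spectrum is $\{-\delta^{-1}\}$. I expect the main obstacle to be Step 2, specifically proving compactness of the boundary-perturbation term when the $\up$ component only lives in $\rH^{1,q}$. This is exactly where the gain of one derivative for $\up$ in \autoref{lem:L0}(iv) is needed, and I would try to deduce it from the formula \eqref{eq:formula Llambda}.
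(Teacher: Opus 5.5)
\paragraph{Verdict.} Your architecture parallels the paper's: pass to $\bB$, compare with the decoupled operator $\bBn$, show that the boundary perturbation is compact, and exclude eigenvalues in $\{\re\lambda\geq 0\}$ by an energy identity. Your plan for the boundary term also works as you anticipate. By \eqref{eq:formula Llambda}, $\mathbf{L}^{\bB}_\lambda=\mathbf{L}^{\bD}_\lambda+R(\lambda,\bBn)\bQ\mathbf{L}^{\bD}_\lambda$ is compact into $\Xn$: $\mathbf{L}^{\bD}_\lambda$ maps into $\rH^{2,q}$ in every component (\autoref{lem:L0}(iv)), and $\bQ\mathbf{L}^{\bD}_\lambda$ maps into $\{0\}\times\rH^{1,q}\times\rH^{1,q}\times\{0\}$. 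Hence $\mathbf{L}^{\bB}_\lambda\Phi R(\lambda,\bB)$ is compact.

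\paragraph{The gap: first half of Step 2.} $\bQ$ is \emph{not} relatively $\bDn$-compact, so Weyl's theorem in its perturbation form does not apply to $\bBn=\bDn+\bQ$. You overlooked the entry $\delta^{-1}\alpha\div\up$ in the pressure row of $\bB$. On $\mathrm{D}(\bBn)=\rH^{1,q}(\Omegap)\times\mathrm{D}(\Ln)\times\mathrm{D}(\Deltan)\times\mathrm{D}(\An)$ the graph norm controls $\up$ only in $\rH^{1,q}$. Take $w_n=(\up^n,0,0,0)$ with $(\up^n)$ bounded in $\rH^{1,q}$ but $(\div\up^n)$ having no $\rL^q$-convergent subsequence. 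Then $(w_n)$ is bounded in the graph norm, yet $(\bQ w_n)$ has no convergent subsequence. The derivative gain noted after \autoref{lem:L0} concerns $\ker(\lambda-\Gm)$, not $\mathrm{D}(\Gn)$. The obstruction is also not an artifact of $\bS$: in $\bA$ it reappears as $-\alpha\div\vp$ with $\vp\in\rH^{1,q}$.

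The missing idea is the paper's splitting $\bQ=\bQ_l+\bQ_r$. The non-compact part $\bQ_l$ is lower triangular, so for $\lambda\in\rho(\Gn)$ the operator $\lambda-(\bDn+\bQ_l)$ factors as a unipotent isomorphism times $\diag(\lambda-\Gn,\lambda-k\Deltan,\lambda-\An)$. The spectrum of $\mathbf{C}_0=\bDn+\bQ_l$ is therefore controlled by the diagonal. Only $\bQ_r$, i.e.\ $-\alpha\nabla\pp$ with $\pp\in\rH^{2,q}$, has to be relatively compact, and it is. Alternatively, use the resolvent form of Weyl's theorem: $R(\lambda,\bBn)-R(\lambda,\bDn)=R(\lambda,\bBn)\bQ R(\lambda,\bDn)$ is compact, because the non-compact part of $\bQ$ lands in the pressure slot, and $R(\lambda,\bBn)$ restricted to that slot is compact.

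\paragraph{Further remarks.}
\begin{enumerate}
\item Step 2 cannot itself supply a point of $\rho(\bB)\cap\rho(\bBn)$, because the identity $R(\lambda,\bB)-R(\lambda,\bBn)=\mathbf{L}^{\bB}_\lambda\Phi R(\lambda,\bB)$ presupposes $\lambda\in\rho(\bB)$. Take it from the shifted maximal regularity behind \autoref{thm:mr} (sectoriality of $\omega-\bB$ for large $\omega$), which does not use the spectral bound. That sectoriality of $\bB$ itself, not merely of its parabolic part, is also what keeps eigenvalues from escaping to infinity near $i\R$; the paper passes over this point.
\item The paper avoids needing $\rho(\bB)\neq\emptyset$ by working on the boundary space. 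There $\Phi\mathbf{L}^{\bB}_\lambda$ is compact, and \cite[Corollary 3.13(b)]{AE:18} gives $\sigma(\bB)\cap\rho(\bBn)=\sigma_p(\bB)\cap\rho(\bBn)$. The price is that all of $\sigma(\bBn)$, not just $\sigma_{\mathrm{ess}}(\bBn)$, must lie in a left half-plane. That costs a second energy argument, which your Weyl route does not need.
\item Reducing to $q=2$ is safe for eigenvalues but not automatically for essential spectra. Run Steps 1--2 for general $q$ and keep $q=2$ for Step 3 only. For Step 1, eliminating $\up$ as in the proof of \autoref{lem:L0}(iv) shows that, for $\lambda\neq-\delta^{-1}$, $\lambda\in\rho(\Gn)$ iff $\lambda^2/(\delta\lambda+1)\in\rho(\Ln)$, and $\sigma(\Ln)$ is $q$-independent.
\item In Step 3 the multiplier for the pressure equation must be $\bar\pp$ itself. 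Any rescaling spoils the cancellation of the $\alpha$-cross terms and of the normal interface terms.
\item With $n_\Gamma$ outward from $\Omegap$ as stated, \eqref{eq:bjs}$_4$ produces the tangential term $\beta\sum_l\|(\uf-\vp)\cdot t^l_\Gamma\|^2_{\rL^2(\Gamma)}$ with the anti-dissipative sign. The dissipative sign you use, as does the paper tacitly, requires reading the BJS law with the normal pointing out of $\Omegaf$, which is the standard form.
\end{enumerate}
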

	
	In the first step we locate the spectrum of the damped wave type part $\Gn$ subject to homogeneous boundary conditions inspired by \cite{Eng:89}. 
	
	\begin{lem}\label{lem:G0 spectrum}
		There exists a constant $\omega < 0$ such that $\sigma(\Gn) \subset \{ \lambda \in \C \colon \re \lambda \leq \omega \}$.
		In particular, the spectral bound of $\Gn$ is strictly negative, i.e. $s(\Gn) < 0$. 
	\end{lem}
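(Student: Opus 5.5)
We reduce the spectral problem for $\Gn$ to the spectrum of the Lamé operator $\Ln$. To do this we solve the resolvent equation explicitly, in the spirit of \cite{Eng:89}, and then use a scalar spectral-mapping argument based on the function $\mu(\lambda)=\frac{\lambda^2}{\delta\lambda+1}$, which already appeared in the proof of \autoref{lem:L0}(iv).

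\emph{Step 1 (reduction).} Fix $\lambda\in\C$ with $\lambda\neq-\delta^{-1}$ and $(f,g)\in \rH^{1,q}(\Omegap)\times\rL^q(\Omegap)$, and consider $(\lambda-\Gn)\binom{\up}{\vp}=\binom{f}{g}$. The first row reads $(\lambda+\delta^{-1})\up-\vp=f$, hence
\[
\up=(\lambda+\delta^{-1})^{-1}(\vp+f).
\]
Inserting this into the second row and using $(\lambda-\delta^{-1})(\lambda+\delta^{-1})+\delta^{-2}=\lambda^2$, we obtain
\[
\delta\bigl(\mu(\lambda)-\Ln\bigr)\vp=g-\delta^{-2}(\lambda+\delta^{-1})^{-1}f .
\]
The right-hand side lies in $\rL^q(\Omegap)$. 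Hence, if $\mu(\lambda)\in\rho(\Ln)$, there is a unique solution $\vp\in\mathrm{D}(\Ln)\subset\rH^{2,q}(\Omegap)$. The formula for $\up$ then gives $\up\in\rH^{1,q}(\Omegap)$, and both components depend boundedly on $(f,g)$. Conversely, if $\mu(\lambda)\in\sigma(\Ln)$, the same computation shows that $\lambda-\Gn$ fails to be injective or fails to be surjective, since $g$ ranges over all of $\rL^q$. Consequently
\[
\sigma(\Gn)\setminus\{-\delta^{-1}\}=\{\lambda\neq-\delta^{-1}\colon \mu(\lambda)\in\sigma(\Ln)\}.
\]
The exceptional point $-\delta^{-1}$ may belong to the spectrum; it is essentially the essential spectrum coming from the hyperbolic part. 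Since it has negative real part, it causes no harm.

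\emph{Step 2 (spectrum of $\Ln$).} We claim that $\sigma(\Ln)\subset(-\infty,-c]$ for some $c>0$. Since $\Omegap$ is bounded and smooth, $\Ln$ has compact resolvent on $\rL^q$. By elliptic regularity its eigenfunctions are smooth, so the spectrum consists of eigenvalues and is independent of $q$. For $q=2$ the operator is self-adjoint, being associated with the symmetric form $\int 2\mup|\varepsilon(u)|^2+\lambdap|\div u|^2$ on $\{u\in\rH^1\colon u|_{\Gammap}=0\}$; the boundary condition $\varepsilon(u)n_\Gamma=0$ enters as the natural condition of this form. Korn's inequality together with the Dirichlet condition on the nonempty part $\Gammap$ makes the form coercive. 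This yields the claim with $c$ the coercivity constant. We expect this step to be the main obstacle. One must check carefully that the traction-type condition $\varepsilon(v)n_\Gamma=0$ (rather than $\sigma n_\Gamma=0$) really gives the form domain stated above, or else adapt the form so that it does. One must also justify the $q$-independence of the spectrum.

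\emph{Step 3 (spectral mapping for the scalar function).} It remains to solve $\lambda^2=s(\delta\lambda+1)$ for $s\le -c$, that is, $\lambda^2-s\delta\lambda-s=0$. We distinguish two cases.
\begin{enumerate}[(a)]
\item If $s^2\delta^2+4s<0$, the roots are complex with $\re\lambda=\frac{s\delta}{2}\le-\frac{c\delta}{2}$.
\item If $s\le-4\delta^{-2}$, both roots are real. Their sum $s\delta$ is negative and their product $-s$ is positive, so both are negative. The larger root is
\[
\lambda_+(s)=\frac{2s}{s\delta-\sqrt{s^2\delta^2+4s}},
\]
which increases monotonically from $-2\delta^{-1}$ at $s=-4\delta^{-2}$ towards $-\delta^{-1}$ as $s\to-\infty$.
\end{enumerate}
Hence every $\lambda\in\sigma(\Gn)$ satisfies $\re\lambda\le\omega:=\max\{-\frac{c\delta}{2},-\delta^{-1}\}<0$, and this bound also covers the point $-\delta^{-1}$. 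This proves the lemma, and in particular $s(\Gn)\le\omega<0$.
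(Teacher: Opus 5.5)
Steps~1 and~3 are correct, and together they give a more elementary route than the paper's. The paper conjugates $\Gn$ with $\diag((-\Ln)^{1/2},1)$, writes the result as $M((-\Ln)^{1/2})$ and invokes Engel's spectral mapping theorem. Your Schur complement gives the same description directly: the eigenvalues of $M(\mu)$ are exactly your roots of $\lambda^2=s(\delta\lambda+1)$ with $s=-\mu^2$, and your exceptional point $-\delta^{-1}$ is the paper's $\sigma_2(C)$. You need neither square roots of $-\Ln$ nor an operator-matrix spectral mapping theorem. One slip: the larger root is $\lambda_+(s)=\frac{-2s}{s\delta-\sqrt{s^2\delta^2+4s}}$. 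Your formula as written is positive, although the endpoint values and the monotonicity you state are right.

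The genuine gap is Step~2, and the route you sketch there does not work. For $\Ln u=\div(2\mup\varepsilon(u)+\lambdap\div u\,\mathbb{I})$, the natural boundary condition of the form $a(u,w)=\int_{\Omegap}2\mup\,\varepsilon(u):\varepsilon(\overline{w})+\lambdap\div u\,\div\overline{w}$ is the traction condition $(2\mup\varepsilon(u)+\lambdap\div u\,\mathbb{I})n_\Gamma=0$, not $\varepsilon(u)n_\Gamma=0$. With the latter, Green's formula gives for $u,w\in\mathrm{D}(\Ln)$
\[
\langle \Ln u,w\rangle_{\rL^2}=-a(u,w)+\lambdap\int_\Gamma \div u\,(\overline{w}\cdot n_\Gamma)\,\d\sigma .
\]
Take $u\cdot n_\Gamma=0$ and $w\cdot n_\Gamma=\div u\neq 0$ on $\Gamma$; this is possible because $\varepsilon(\cdot)n_\Gamma=0$ can be met for any Dirichlet trace by adjusting the normal derivative. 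Then $\langle\Ln u,w\rangle-\langle u,\Ln w\rangle=\lambdap\|\div u\|_{\rL^2(\Gamma)}^2\neq0$. So $\Ln$ is not self-adjoint, and you have no argument that $\sigma(\Ln)$ is real.

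Reality is exactly what Step~3 uses. For $t\in\R$ the point $s=-t^2/(1+i\delta t)$ satisfies $-\delta^{-2}<\re s\le 0$, and $\lambda=it$ solves $\lambda^2=s(\delta\lambda+1)$. So a single non-real eigenvalue of $\Ln$ on this curve would put spectrum of $\Gn$ on the imaginary axis, and invertibility plus sectoriality of $\Ln$ do not exclude this.

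To close the gap you have two options:
\begin{enumerate}[(i)]
\item Prove a genuine localization of $\sigma(\Ln)$ under $\varepsilon(v)n_\Gamma=0$: it must avoid $\{\lambda^2/(\delta\lambda+1)\colon \re\lambda\geq\omega\}$ for some $\omega<0$.
\item Change the operator. With the traction condition, or with the constitutive law exactly as written in $\sigmapdelta$, i.e.\ $\Ln=(\lambdap+2\mup)\div\varepsilon(\cdot)$, the condition $\varepsilon(v)n_\Gamma=0$ is natural for the symmetric form $(\lambdap+2\mup)\int_{\Omegap}\varepsilon(u):\varepsilon(\overline{w})$. Korn's inequality with $u|_{\Gammap}=0$ then gives coercivity, and your argument goes through.
\end{enumerate}

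The paper's proof does not supply this ingredient either. It only places $\sigma((-\Ln)^{1/2})$ in a half-plane $\{\re\mu>\omega_1\}$ via the $\mathcal{H}^\infty$-calculus of \cite{DDHPV:04}, which holds only after a shift. But $\mu^2=t^2/(1+i\delta t)$ has $\re\mu\to\infty$ as $t\to\infty$ while $it\in\sigma(M(\mu))$, so that information alone cannot give its ``direct estimate''.
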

	\begin{proof}
		As Lam\'{e} operator $\Ln$ is parameter elliptic and has homogeneous boundary conditions it admits bounded $\mathcal{H}^{\infty}$-calculus by the results in \cite{DDHPV:04}.
		Therefore, we may define $(-\Ln)^{-\frac{1}{2}}$ and note that there exists a constant $\omega_1 > 0$ such that 
		$\sigma((-\Ln)^{-\frac{1}{2}}) \subset \{ \lambda \in \C \colon \re \lambda > \omega_1 \}$. 
		Using the isomorphism
		\begin{equation*}
			S := \begin{pmatrix}
				(-\Ln)^{\frac{1}{2}} & 0 \\
				0 & 1
			\end{pmatrix}
			\colon \rH^{1,q}_{\Gammap}(\Omegap) \times \rL^q(\Omegap)
			\to \rL^q(\Omegap) \times \rL^q(\Omegap),
		\end{equation*}
		consider
		\begin{align*} 
			C := S\Gn S^{-1}
			= 
			\begin{pmatrix}
				-\delta^{-1} & (-\Ln)^{\frac{1}{2}} \\
				-\delta^{-2}(-\Ln)^{-\frac{1}{2}} & \delta \Ln+\delta^{-1} 
			\end{pmatrix}.
		\end{align*}
			Then,	a direct calculation shows that the domain $\mathrm{D}(C) = \rL^q(\Omegap) \times \mathrm{D}(\Ln)$.

		We write $C = M((-\Ln)^{\frac{1}{2}})$ with the matrix
		\begin{equation*}
			M(\alpha) := 
			\begin{pmatrix}
				-\delta^{-1} & \alpha \\
				- \delta^{-2} \alpha^{-1} & -\delta \alpha^2 + \delta^{-1}
			\end{pmatrix} .
		\end{equation*}
		Now, we consider $\mu_0 \in \C$ such that $\mu_0^2 + (1+\delta \mu_0)\alpha^2 \not = 0$, and obtain 
		\begin{equation*}
			(\mu_0 - M(\alpha))^{-1}
			= \frac{1}{\mu_0^2 + (1+\delta \mu_0)\alpha^2} \cdot
			\begin{pmatrix}
				\mu_0 +\delta \alpha^2 - \delta^{-2} & \alpha \\
				-  \delta^{-2} \alpha^{-1} & \mu_0 + \delta^{-1}
			\end{pmatrix} .
		\end{equation*}
		Passing to the limit yields
		\begin{equation*}
			T := \lim_{\alpha \to \infty} 
			(\mu_0 - M(\alpha))^{-1}
			= \begin{pmatrix}
				\frac{\delta}{1+\delta\mu_0} & 0 \\
				0 & 0 
			\end{pmatrix} 
		\end{equation*}
		and therefore $\sigma(T) = \left\{ 0, \frac{\delta}{1+\delta \mu_0}\right\}$. 
		Therefore, the set $\sigma_2(C)$ is given by
		\begin{equation*}
			\sigma_2(C)
			:= \{ \mu_0 - \lambda^{-1} \colon \lambda \in \sigma(T) \setminus \{0\} \}
			= \left\{ - \frac{1}{\delta} \right\}. 
		\end{equation*}
		Consider $\mu \in \sigma((-\Ln)^{\frac{1}{2}}) \subset \{ \lambda \in \C \colon \re \lambda > \omega_1 \}$ a direct estimate shows that there exists a constant $\omega_2 < 0$ such that
		\begin{equation*}
			\frac{1}{2} \re \left( - \delta \mu^2 \pm \sqrt{\delta^2 \mu^4 - 4 \alpha^2} \right) \leq \omega_2 < 0 .
		\end{equation*}
		and we conclude
		\begin{equation*}
			\sigma_1(C)
			:= \bigcup_{\mu \in \sigma((-\Ln)^{\frac{1}{2}})} \sigma(M(\mu))
			\subset \{ \lambda \in \C \colon \re \lambda < \omega_2 \} .
		\end{equation*}
		Finally, \cite[Theorem 2.8]{Eng:89} implies that for $\omega := \max\left\{ \omega_2, - \frac{1}{\delta} \right\} < 0$, we have
		\begin{equation*}
			\sigma(\Gn)
			= \sigma(C)
			= \sigma_1(C) \cup \sigma_2(C)
			\subset \{ \lambda \in \C \colon \re \lambda < \omega < 0 \}
		\end{equation*}
		and the claim follows. 
	\end{proof}
	
	Since $\mathrm{D}(\Deltan) \stackrel{c}{\hookrightarrow} \rL^q(\Omegap)$ and $\mathrm{D}(\An)\stackrel{c}{\hookrightarrow} \rL^q_{\sigma}(\Omegaf)$, both operators have compact resolvents on $\rL^q(\Omegap)$ and $\rL^q_{\sigma}(\Omegaf)$, respectively. So, their spectra only consists of discrete eigenvalues and by the regularity of the eigenfunctions and Sobolev embeddings we see that their spectra are independent of $q \in (1,+\infty)$. On $\rL^2(\Omegap)$ and $\rL^2_{\sigma}(\Omegaf)$, respectively, the operators $\Deltan$ and $\An$ are self-adjoint and hence $\sigma(\Deltan), \sigma(\An) \subset \R$. 
	Integration by parts shows for $\lambda \pp = k \Deltan \pp$ and $\tilde{\lambda} \uf = \An \uf$ that
	\begin{equation*}
		\lambda \| \pp \|_{\rL^2} = - \| \nabla \pp \|_{\rL^2} \leq 0 \qquad \text{ and } \qquad
		\tilde{\lambda} \| \uf \|_{\rL^2} = - \| \nabla \uf \|_{\rL^2} \leq 0 ,
	\end{equation*}
	i.e. $\sigma(\Deltan), \sigma(\An) \subset \{\lambda \in \R \colon \lambda \leq 0\}$. Finally, the inequalities above imply for $\lambda =0$ that $\pp$ and $\uf$ must be constant. Since $\pp|_{\Gammap} = 0$ and $\uf|_{\Gammaf} = 0$ we conclude $\pp = 0$ and $\uf = 0$ and therefore  $\sigma(\Deltan), \sigma(\An) \subset \{\lambda \in \R \colon \lambda < 0\}$. Since, the spectra are discrete there exists a constant $\tilde{\omega} < 0$ such that 
	$\sigma(\Deltan), \sigma(\An) \subset \{\lambda \in \R \colon \lambda < \tilde\omega\}$. Together, with \autoref{lem:G0 spectrum} we obtain that the spectrum of 
	\begin{equation*}
		\bDn
		\coloneqq 
		\begin{pmatrix}
			\Gn & 0 & 0 \\
			0 & k \Deltan & 0 \\
			0 & 0 & \An 
		\end{pmatrix}
		\quad \text{ with domain }
		\quad 
		D(\bDn) = D(\Gn) \times D(\Deltan) \times D(\An) 
	\end{equation*}
	satisfies 
	$\sigma(\bDn) \subset \sigma(\Gn) \cup \sigma(k \Deltan) \cup \sigma(\An) \subset  \{\lambda \in \C \colon \re \lambda \leq \bar\omega\}$ with $\bar{\omega} := \max\{\tilde{\omega},\omega\} < 0$. We recall that 
	\begin{equation}
		\bBn = \bDn + \bQ, \label{eq:splitting}
	\end{equation}
	and we split the perturbation into its lower and upper triangular parts
	\begin{equation*}
		\bQ
		= 
		\bQ_l + \bQ_r 
		\coloneqq  
		\begin{pmatrix}
			0 & 0 & 0 \\
			(\delta^{-1}\alpha \div, -\alpha \div) & 0 & 0 \\
			(0,0) & 0 & 0 
		\end{pmatrix}
		+
		\begin{pmatrix}
			0 & \binom{0}{-\alpha \nabla} & 0 \\
			0 & 0 & 0 \\
			0 & 0 & 0 
		\end{pmatrix}
	\end{equation*}

	In the next step we consider the operator matrix
	\begin{equation*}
		\mathbf{C}_0
		=
		\bDn + \bQ_l, \qquad \text{ with domain } \qquad \mathrm{D}(\mathbf{C}_0) = \mathrm{D}(\bDn).
	\end{equation*} 
	We obtain the following result from the lower triangular structure of $\mathbf{C}_0$. 
	\begin{lem}\label{lem:A00 spectrum}
		There exists a constant $\omega < 0$ such that $\sigma(\mathbf{C}_0) \subset \{ \lambda \in \C \colon \re \lambda \leq \omega \}$. In particular, its spectral bound is strictly negative, i.e. $s(\mathbf{C}_0) < 0$. 
	\end{lem}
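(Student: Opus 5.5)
The plan is to exploit the block lower triangular structure of $\mathbf{C}_0 = \bDn + \bQ_l$. With respect to the decomposition $\Xn = (\rH^{1,q}(\Omegap)\times\rL^q(\Omegap)) \times \rL^q(\Omegap) \times \rL^q_\sigma(\Omegaf)$ we have
\begin{equation*}
	\mathbf{C}_0 = \begin{pmatrix} \Gn & 0 & 0 \\ Q_{21} & k\Deltan & 0 \\ 0 & 0 & \An \end{pmatrix}, \qquad Q_{21} := (\delta^{-1}\alpha \div, -\alpha\div),
\end{equation*}
with diagonal domain $\mathrm{D}(\Gn)\times\mathrm{D}(\Deltan)\times\mathrm{D}(\An)$. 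The claim is that $\rho(\bDn) \subset \rho(\mathbf{C}_0)$. The conclusion then follows from the bound $\sigma(\bDn)\subset\{\re\lambda\le\bar\omega\}$ established just before the lemma, with $\omega := \bar\omega$.

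Fix $\lambda \in \rho(\Gn)\cap\rho(k\Deltan)\cap\rho(\An)$. I will solve $(\lambda-\mathbf{C}_0)\bw = (f_1,f_2,f_3)$ by forward substitution. First set $(\up,\vp) := R(\lambda,\Gn) f_1 \in \mathrm{D}(\Gn)$. Then set $\pp := R(\lambda,k\Deltan)(f_2 + Q_{21}(\up,\vp))$. Finally set $\uf := R(\lambda,\An) f_3$. This gives the explicit candidate inverse
\begin{equation*}
	R(\lambda,\mathbf{C}_0) = \begin{pmatrix} R(\lambda,\Gn) & 0 & 0 \\ R(\lambda,k\Deltan)\, Q_{21} R(\lambda,\Gn) & R(\lambda,k\Deltan) & 0 \\ 0 & 0 & R(\lambda,\An) \end{pmatrix}.
\end{equation*}
One checks directly that this is a two-sided inverse of $\lambda-\mathbf{C}_0$. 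Uniqueness comes from the same substitution applied to the homogeneous system.

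The only point requiring care is that the off-diagonal entry is a bounded operator on $\Xn$, which amounts to $Q_{21}R(\lambda,\Gn)$ being bounded into $\rL^q(\Omegap)$. Here $\up\in\rH^{1,q}(\Omegap)$ is the first component of the state space, so $\div \up \in \rL^q$ with norm controlled by the $\Xn$-norm. Also $\vp \in \mathrm{D}(\Ln)\subset\rH^{2,q}(\Omegap)$, and $\mathrm{D}(\Gn)$ carries the graph norm, which is equivalent to $\rH^{1,q}\times\rH^{2,q}$ by elliptic regularity for $\Ln$. Hence $\div\vp\in\rH^{1,q}\subset\rL^q$. Consequently $Q_{21}\colon \mathrm{D}(\Gn)\to \rL^q(\Omegap)$ is bounded, and composing with the bounded map $R(\lambda,\Gn)\colon \Xn$-component $\to \mathrm{D}(\Gn)$ gives boundedness. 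Closedness of $\mathbf{C}_0$ then follows, for instance from $\lambda-\mathbf{C}_0$ having a bounded inverse.

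Thus $\sigma(\mathbf{C}_0)\subset\sigma(\Gn)\cup\sigma(k\Deltan)\cup\sigma(\An)$. By \autoref{lem:G0 spectrum} and the spectral discussion of $\Deltan$ and $\An$ preceding the lemma, this union lies in $\{\re\lambda\le\bar\omega\}$ with $\bar\omega<0$. I expect no serious obstacle; the regularity bookkeeping for $\up$ is the one spot needing attention. That bookkeeping is harmless because only first derivatives of $\up$ appear in $Q_{21}$, so merely $\rH^{1,q}$-regularity of $\up$ suffices.
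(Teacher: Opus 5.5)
Your proposal is correct and is essentially the paper's argument. For $\lambda\in\rho(\Gn)$ the paper factors $\lambda-\mathbf{C}_0$ as a unipotent lower-triangular operator matrix, with off-diagonal entry $-Q_{21}R(\lambda,\Gn)$, times $\lambda-\bDn$, and your forward-substitution inverse is exactly $R(\lambda,\bDn)$ composed with the inverse of that triangular factor. Your check that $Q_{21}R(\lambda,\Gn)$ is bounded into $\rL^q(\Omegap)$ supplies the detail the paper leaves implicit when it calls the triangular factor an isomorphism, and you rightly claim only $\sigma(\mathbf{C}_0)\subset\sigma(\bDn)$, which is all that is needed.
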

	\begin{proof}
		If $\lambda \in \sigma(\Gn)$ we obtain
		\begin{equation*}
			\lambda - \mathbf{C}_0
			= \begin{pmatrix}
				1 & 0 & 0 \\
				(\delta \alpha \div,-\alpha \div)R(\lambda,\Gn) & 1 & 0 \\
				0 & 0 & 1
			\end{pmatrix}
			\cdot 
			\begin{pmatrix}
				\lambda-\Gn & 0 & 0 \\
				0 & \lambda-k\Deltan & 0 \\
				0 & 0 & \lambda-\An 
			\end{pmatrix} .
		\end{equation*}
		Since the first operator on the right hand side is an isomorphism we obtain that  
		\begin{equation*}
			\sigma(\mathbf{C}_0)
			= \sigma(\bDn)
			\subseteq \sigma(\Gn) \cup \sigma (k\Deltan) \cup \sigma(\An) \subset \{\lambda \in \C \colon \re\lambda < \omega\}
		\end{equation*} 
		with $\omega < 0$.
	\end{proof}
	
	Now, we analyze the spectrum of $\bBn$. 
	
	\begin{lem}\label{lem:A0 spectrum}
		There exists a constant $\omega < 0$ such that $\sigma(\bBn) \subset \{ \lambda \in \C \colon \re \lambda \leq \omega \}$. In particular, its spectral bound is strictly negative, i.e. $s(\bBn) < 0$. 
	\end{lem}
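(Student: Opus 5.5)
Our plan is to treat $\bBn = \mathbf{C}_0 + \bQ_r$ as a perturbation of the lower triangular operator $\mathbf{C}_0$ from \autoref{lem:A00 spectrum}. The key structural fact is that $\bQ_r$ only has the entry $-\alpha\nabla$ from the pressure component into the $\vp$-component. Hence it is of lower order with respect to $k\Deltan$: it maps $\mathrm{D}(\Deltan)\subset\rH^{2,q}$ into $\rH^{1,q}\subset\rL^q$. Consequently $\bQ_r R(\lambda,\mathbf{C}_0)$ is small for large $|\lambda|$ in a suitable sector, while on bounded sets one needs a genuine uniqueness argument.

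\emph{Step 1: far from the origin.} For $\re\lambda\ge \omega_0$ with $\omega_0<0$ close to $0$ and $|\lambda|$ large, we factor $\lambda-\bBn = (\Id - \bQ_r R(\lambda,\mathbf{C}_0))(\lambda-\mathbf{C}_0)$. From the triangular factorization in the proof of \autoref{lem:A00 spectrum}, the pressure row of $R(\lambda,\mathbf{C}_0)$ equals $R(\lambda,k\Deltan)$ applied to data plus the coupling term $(\delta^{-1}\alpha\div,-\alpha\div)R(\lambda,\Gn)$. Sectoriality of $k\Deltan$ gives $\|\nabla R(\lambda,k\Deltan)\|\lesssim|\lambda|^{-1/2}$. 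For the coupling term, the $\vp$-component of $R(\lambda,\Gn)$ behaves like $(\lambda^2/(\delta\lambda+1)-\Ln)^{-1}$, so $\div$ of it is also $O(|\lambda|^{-1/2})$. In both cases $\|\bQ_rR(\lambda,\mathbf{C}_0)\|<1$ for $|\lambda|\ge R_0$ on the half-plane, so a Neumann series shows $\lambda\in\rho(\bBn)$. This uses the uniform resolvent bounds of $\Gn$ on the right half-plane shifted by $\omega$, which follow from the functional calculus representation $C=M((-\Ln)^{1/2})$ in \autoref{lem:G0 spectrum}.

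\emph{Step 2: the compact region.} On the bounded set $K=\{\re\lambda\ge\omega_0,\ |\lambda|\le R_0\}$, $\lambda-\mathbf{C}_0$ is invertible. We would show that $\bQ_rR(\lambda,\mathbf{C}_0)$ is compact. Its range passes through $\nabla R(\lambda,k\Deltan)$, which maps into $\rH^{1,q}$. Although the $\vp$ component target is $\rL^q$ and the first component involves the non-compact $\rH^{1,q}$, the operator factors through $\mathrm{D}(\Deltan)\stackrel{c}{\hookrightarrow}\rH^{1,q}$. Hence $\lambda-\bBn$ is Fredholm of index zero on $K$, and it suffices to exclude eigenvalues with $\re\lambda>\omega_0$. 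The ranges of $\bQ_l$ and $\Gn$ are not compact, but only $\bQ_r$ is perturbed, which is why the splitting is $\mathbf{C}_0+\bQ_r$ rather than $\bDn+\bQ$.

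\emph{Step 3: no eigenvalues with $\re\lambda\ge 0$, via energy.} Let $\bBn\bw=\lambda\bw$, or equivalently use $\bAn$-type variables $u=\up$, $v=\lambda u$. By elliptic regularity the eigenvector lies in $q=2$ spaces, so we test in $\rL^2$. The equations become $\lambda^2 u-(1+\delta\lambda)\Ln u+\alpha\nabla p=0$, $\lambda p+\alpha\lambda\div u-k\Deltan p=0$, and $\lambda\uf=\An\uf$. Testing the first with $\bar\lambda\bar u$ and the second with $\bar p$, and using the homogeneous boundary conditions, the coupling terms $\alpha\lambda\langle\div u,p\rangle$ cancel against each other. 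Taking real parts leaves
\begin{equation*}
\re\lambda\bigl(|\lambda|^2\|u\|^2+\mathcal{E}(u)+\|p\|^2\bigr)+\delta|\lambda|^2\mathcal{E}(u)+k\|\nabla p\|^2=0,
\end{equation*}
where $\mathcal{E}(u)=\langle -\Ln u,u\rangle>0$ by Korn's inequality. For $\re\lambda\ge0$ this forces $p=0$ and $\lambda u=0$. If $\lambda=0$, then $\Ln u=0$, so $u=0$. The Stokes component vanishes by the discussion before \autoref{lem:A00 spectrum}. Continuity on the compact set $K$ then pushes the spectrum into $\re\lambda\le\omega$ for some $\omega<0$. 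The main obstacle is making Steps 1 and 2 rigorous in the $\rH^{1,q}\times\rL^q$ setting, in particular compactness and the uniform resolvent bounds. The energy identity is routine.
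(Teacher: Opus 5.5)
Your Steps~2 and~3 follow the paper's proof. The paper also uses that $\bQ_r$ is relatively $\mathbf{C}_0$-compact because it factors through $\mathrm{D}(\Deltan)\stackrel{c}{\hookrightarrow}\rH^{1,q}(\Omegap)$. It gets $\sigma_{\mathrm{ess}}(\bBn)=\sigma_{\mathrm{ess}}(\mathbf{C}_0)$ from Kato's Theorem~5.35, which is your Fredholm-index-zero statement, and then excludes eigenvalues by an $\rL^2$ energy identity.

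Your Step~1 is not in the paper, and it is needed. The paper goes from ``every eigenvalue has $\re\lambda<0$'' straight to a uniform $\omega<0$. That does not follow unless eigenvalues with $\re\lambda\uparrow 0$ and $|\mathrm{Im}\,\lambda|\to\infty$ are ruled out. Your bound $\|\bQ_r R(\lambda,\mathbf{C}_0)\|=O(|\lambda|^{-1/2})$ on $\{\re\lambda\geq\omega_0\}$ is correct. Only $\nabla R(\lambda,k\Deltan)$ has to decay; the terms $\div[R(\lambda,\Gn)f]$ need only be bounded, which the reduction to $(\lambda^2/(\delta\lambda+1)-\Ln)^{-1}$ gives. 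Combined with compactness of $\sigma(\bBn)\cap K$, this closes that hole.

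The genuine gap is in Step~3, and the paper has the same gap. The cancellation you claim is not produced by the homogeneous conditions of $\mathrm{D}(\bBn)$. On $\Gamma$ these are only $\varepsilon(u)n_\Gamma=0$ and $\partial_{n_\Gamma}\pp=0$, so neither $\pp$ nor $u\cdot n_\Gamma$ vanishes there. Integration by parts gives
\[
\langle \nabla\pp,u\rangle=-\langle \pp,\div u\rangle+\int_\Gamma \pp\,\overline{u\cdot n_\Gamma},\qquad
\langle -\Ln u,u\rangle=2\mup\|\varepsilon(u)\|^2+\lambdap\|\div u\|^2-\lambdap\int_\Gamma \div u\,\overline{u\cdot n_\Gamma}.
\]
The second identity holds because $\Ln$ carries $\varepsilon(\cdot)n_\Gamma=0$ rather than a traction condition, so $\mathcal{E}(u)$ need not even be real. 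The real part of your identity therefore picks up $\re\bigl(\alpha\bar\lambda\int_\Gamma\pp\,\overline{u\cdot n_\Gamma}\bigr)$ and $-\lambdap\re\bigl((\bar\lambda+\delta|\lambda|^2)\int_\Gamma\div u\,\overline{u\cdot n_\Gamma}\bigr)$. Neither term has a sign, so ``$\re\lambda\geq 0$ forces $\pp=0$ and $\lambda u=0$'' does not follow.

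Your identity is exactly right for the natural condition $\sigmapdelta n_\Gamma=0$, i.e.\ $(1+\delta\lambda)(2\mup\varepsilon(u)+\lambdap\div u\,\mathbb{I})n_\Gamma=\alpha\pp n_\Gamma$. But that condition couples the $\vp$- and $\pp$-components, so it is not the diagonal domain of $\bBn$. The paper's proof hides the same problem by integrating by parts ``using the interface conditions \eqref{eq:ns-visco3}'', which belong to $\bA$, not to $\bBn$.

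To finish, you have two options. One is to take a reference operator whose homogeneous conditions are this coupled traction condition, and then recheck the compactness and boundary-perturbation steps for it. The other is to give an argument excluding eigenvalues with $\re\lambda\geq 0$ for the diagonal domain that does not rely on this energy identity.
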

	\begin{proof}
		The spectrum can be decomposed into
		$\sigma(\bBn) = \sigma_{\mathrm{ess}}(\bBn) \cup \sigma_d(\bBn) .$
		We first treat the essential spectrum. 
		To this end we recall that 
		\begin{equation*}
			\bBn = \mathbf{C}_0 + \bQ_r.
		\end{equation*}
		From the compact embedding 
		\begin{equation*}
			\mathrm{D}(k\Deltan) \subseteq \rH^{2,q}(\Omegap) \stackrel{c}{\hookrightarrow} \rH^{1,q}(\Omegap)
		\end{equation*}
		and the boundedness of $ \nabla \colon \rH^{1,q}(\Omegap) \to \rL^q(\Omegap)$ we obtain that $\bQ_1$ is relatively $\mathbf{C}_0$-compact. Hence, we conclude from \cite[Theorem 5.35]{Kat:95} and \autoref{lem:A00 spectrum} that
		\begin{equation*}
			\sigma_{\mathrm{ess}}(\bBn)
			= 
			\sigma_{\mathrm{ess}}(\mathbf{C}_0)
			\subseteq 
			\sigma(\mathbf{C}_0)
			\subset \{ \lambda \in \C \colon \re \lambda \leq \omega \} .
		\end{equation*}
		For the point spectrum we note that it is independent of $q$ and hence it suffices to consider the Hilbert space case $q = 2$.
		We equip the space $\Xn$ with
		the inner-product 
		\begin{align*}
			\left\langle({u}_1,{v}_1,p_1,{w}_1),({u}_2,{v}_2,p_2,{w}_2)\right\rangle_{\Xn}:&=2\mup\int\limits_{\Omegap} \varepsilon({u}_1):\varepsilon(\overline{{u}_2}) + \lambdap\int\limits_{\Omegap} \mathrm{div} {u}_1\mathrm{div}\overline{{u}_2} \\
			&+ \int\limits_{\Omegap} {v}_1\cdot \overline{{v}_2}+ \int\limits_{\Omegap} p_1\overline{p_2}+\int\limits_{\Omegaf} {w}_1\cdot \overline{{w}_2}.
		\end{align*}
		and note that $\Xn$ is a Hilbert space. As the spectra of $\bAn$ and $\bBn$ coincide, we consider the equations
		\begin{equation*}
			\left\{
			\begin{aligned}
				\lambda \up &= \vp &&\text{ on } \Omegap, \\
				\lambda \vp &= \Ln \up + \delta \mc{L}_m \vp -\alpha\nabla \pp &&\text{ on } \Omegap, \\
				\lambda \pp &= - \alpha \operatorname{div} \vp + k\Deltan  \pp&&\text{ on } \Omegap, \\
				\lambda \uf &= \An \uf &&\text{ on } \Omegaf,
			\end{aligned}
			\right. 
		\end{equation*}
		subject to the interface conditions \eqref{eq:ns-visco3} and the boundary conditions \eqref{eq:ns-visco4}. Integration by parts and the interface conditions yield
		\begin{align*}
			&2 \mathrm{Re} \lambda \cdot (\| \up \|_{\rH^1}^2+ \| \vp \|_{\rL^2}^2 + \| \pp \|_{\rL^2}^2 + \| \uf \|_{\rL^2}^2) \\ &= (\lambda+\overline{\lambda})\left(2\mup\int\limits_{\Omegap} \varepsilon(\up):\varepsilon(\overline{\up}) + \lambdap\int\limits_{\Omegap} \D \up\D\overline{\up}\right) +  \int_{\Omegap} \lambda \vp \overline{\vp} + \int_{\Omegap} \overline{\lambda} \vp \overline{\vp}+ \int_{\Omegap} \lambda \pp \overline{\pp} +\int_{\Omegap} \pp \overline{\lambda} \overline{\pp} \\&+ \int_{\Omegaf} \lambda \uf \overline{\uf} + \int_{\Omegaf} \uf \overline{\lambda} \overline{\uf} \\
			&= 2\mup\int\limits_{\Omegap} \varepsilon({v}_p):\varepsilon(\overline{\up}) + \lambdap\int\limits_{\Omegap} \D {v}_p\D\overline{\up}+ 2\mup\int\limits_{\Omegap} \varepsilon(\up):\varepsilon(\overline{{v}_p}) + \lambdap\int\limits_{\Omegap} \D \up\D\overline{{v}_p}\\ &+ \int_{\Omegap} (\mc{L}_m \up + \delta \mc{L}_m \vp -\alpha\nabla \pp)\cdot  \overline{\vp} + \int_{\Omegap} (\mc{L}_m \overline{\up} + \delta \mc{L}_m \overline{\vp} -\alpha\nabla \overline{\pp})\cdot  {\vp} +\int_{\Omegap} (- \alpha \operatorname{div} \vp + k\Delta_m  \pp) \overline{\pp} \\ &+\int_{\Omegap} (- \alpha \operatorname{div} \overline{\vp} + k\Delta_m  \overline{\pp}) {\pp} + \int_{\Omegaf} A_m u_{f} \overline{u_{f}} + \int_{\Omegaf} u_{f} A_m \overline{\uf} \\
			&= -2\delta\mup \| \varepsilon(\vp)\|_{\rL^2}^2-\delta\lambdap \|\D \vp \|_{\rL^2}^2  -2k \| \nabla \pp \|_{\rL^2}^2  -2\mu_f \| \varepsilon(\uf)\|_{\rL^2}^2
			\leq 0 .
		\end{align*}
		Hence, $\mathrm{Re}\lambda \leq 0$. Finally, for the case $\mathrm{Re}\lambda = 0$ we obtain that $\vp$, $\pp$ and $\uf$ are constant. Now the boundary conditions \eqref{eq:ns-visco4} imply $\vp = \pp = 0$ in $\Omegap$ and $\uf = 0$ in $\Omegaf$. Trivially, we have $\up=0$ in $\Omegap$. Thus, we can conclude that for some constant $\omega<0$, $
		\sigma_{{d}}(\bBn)
		\subset \{ \lambda \in \C \colon \re \lambda \leq \omega \} .
		$
	\end{proof}
		
	With our preparation from \autoref{sec:representation} and the control of the spectrum of the operator $\bBn$ with homogeneous boundary conditions, we are	
	now sufficient prepared to prove \autoref{thm:spectrum A}.
	
	\begin{proof}[{Proof of \autoref{thm:spectrum A}.}]
		We know $\sigma(\bA) = \sigma(\bB)$ and
		\begin{equation*}
			\sigma(\bB)
			\subseteq \sigma(\bBn) \cup (\sigma(\bB) \cap \rho(\bBn)). 
		\end{equation*}
		Moreover, by using \autoref{lem:A0 spectrum} we have $\sigma(\bBn) \subset \{ \lambda \in \C \colon \re \lambda \leq \omega \}$. Thus, it is enough to show: there exists a constant $\omega < 0$ such that $(\sigma(\bB) \cap \rho(\bBn)) \subset \{ \lambda \in \C \colon \re \lambda \leq \omega \}$. In the sequel we assume $\lambda~\in~\rho(\bBn)$. 
		Now, we use the fact that the boundary perturbation $\Phi$ is of lower order. 
	From \eqref{eq:formula Llambda} we obtain
		\begin{equation}
			\Phi \mathbf{L}^{\bB}_\lambda 
			= \Phi \mathbf{L}^{\bD}_\lambda 
			+ \Phi R(\lambda,\bBn) \bQ \mathbf{L}^{\bD}_\lambda .
			\label{eq:phiLlambda}
		\end{equation}
		Using
		the mapping properties of $\text{L}_\lambda$ from \autoref{lem:L0} and the trace theorem we obtain 
		\begin{align*}
			\Phi \mathbf{L}^{\bD}_\lambda
			\colon \rW^{1-\frac{1}{q},q}(\Gamma) \times\rW^{1-\frac{1}{q},q}(\Gamma)
			\times \rW^{1-\frac{1}{q},q}(\Gamma)
			&\to \rW^{2-\frac{1}{q},q}(\Gamma) \times \rW^{2-\frac{1}{q},q}(\Gamma) \times \rW^{2-\frac{1}{q},q}(\Gamma) \\
			&\stackrel{c}{\hookrightarrow}
			\rW^{1-\frac{1}{q},q}(\Gamma) \times \rW^{1-\frac{1}{q},q}(\Gamma) \times \rW^{1-\frac{1}{q},q}(\Gamma) 
		\end{align*}
		and the first term on the right-hand side of \eqref{eq:phiLlambda} is compact on $\rW^{1-\frac{1}{q},q}(\Gamma) \times\rW^{1-\frac{1}{q},q}(\Gamma) \times \rW^{1-\frac{1}{q},q}(\Gamma)$.
		For the second term we first note that $\mathrm{D}(\bBn) = \mathrm{D}(\Gn) \times \mathrm{D}(\Deltan) \times \mathrm{D}(\An) \hookrightarrow \rH^{1,q}(\Omegap) \times  \rH^{1,q}(\Omegap) \times  \rH^{1,q}(\Omegap) \times  \rH^{1,q}(\Omegaf)$ and by the trace theorem  
		\begin{equation*} 
		\Phi R(\lambda,\bBn) \colon \rH^{1,q}(\Omegap) \times \rL^{q}(\Omegap) \times \rL^{q}(\Omegap) \times \rL^{q}_{\sigma}(\Omegaf) \to 
		\rW^{1-\frac{1}{q},q}(\Gamma) \times \rW^{1-\frac{1}{q},q}(\Gamma) \times \rW^{1-\frac{1}{q},q}(\Gamma) 
		\end{equation*} 
		is bounded. Using the structure of the perturbation $\bB$ and the mapping properties of $\mathbf{L}^{\bD}_\lambda$ from \autoref{lem:L0} we obtain
		\begin{align*}
			\bQ \mathbf{L}^{\bD}_\lambda
			\colon \rW^{1-\frac{1}{q},q}(\Gamma) \times\rW^{1-\frac{1}{q},q}(\Gamma) \times \rW^{1-\frac{1}{q},q}(\Gamma) 
			&\to \{0\} \times \rH^{1,q}(\Omega_p)
			\times \rH^{1,q}(\Omega_p)
			\times \{0\} \\
			&\stackrel{c}{\hookrightarrow}
			\rH^{1,q}(\Omegap) \times \rL^{q}(\Omegap) \times \rL^{q}(\Omegap) \times \rL^{q}_{\sigma}(\Omegaf) 
		\end{align*}
		and hence $\bQ \mathbf{L}^{\bD}_\lambda$ is compact. It follows that
		$\Phi R(\lambda,\bBn)\bQ \mathbf{L}^{\bD}_\lambda$ is a compact operator on the boundary space $\rW^{1-\frac{1}{q},q}(\Gamma) \times\rW^{1-\frac{1}{q},q}(\Gamma) \times \rW^{1-\frac{1}{q},q}(\Gamma)$. 
		Finally, we conclude from \eqref{eq:phiLlambda} that $\mathbf{L}^{\bB}_\lambda$ is a compact operator on $\rW^{1-\frac{1}{q},q}(\Gamma) \times\rW^{1-\frac{1}{q},q}(\Gamma) \times \rW^{1-\frac{1}{q},q}(\Gamma)$.
		Now, it follows from \autoref{lem:representation} and \cite[Corollary 3.13(b)]{AE:18} that 
		\begin{equation*}
			(\sigma(\bB) \cap \rho(\bBn)) 
			= (\sigma_p(\bB) \cap \rho(\bBn)) .
		\end{equation*}
		As in the steps of the second part of the proof of \autoref{lem:A0 spectrum}, using the 
		integration by parts and the interface conditions, we obtain
		that there exists a constant $\omega < 0$ such that
		$\sigma_p(\bB) \subset \{ \lambda \in \C \colon \re \lambda \leq \omega \}$ and the claim follows. 
	\end{proof}

	\section{Maximal regularity}
	\label{sec:mr}
	
	In this section we address the maximal regularity of $\bA$. First we prove maximal regularity of $\Gn$, then by perturbation theory we obtain maximal regularity of ${\bA}_0$. Further, we use a boundary perturbation argument in order to obtain the maximal regularity of ${\bA}$. Finally, similarity transform yield the desired result.
	
	\begin{thm}\label{thm:mr}
		The operator $\bA$ admits maximal $\rL^r$-regularity on $\Xn$ for all $r \in (1,\infty)$. 
	\end{thm}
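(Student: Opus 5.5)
We prove maximal $\rL^r$-regularity for the similar operator $\bB = \bS\bA\bS^{-1}$. Since $\bS$ is an isomorphism of $\Xn$ that maps $\mathrm{D}(\bA)$ onto $\mathrm{D}(\bB)$, maximal regularity transfers directly from $\bB$ to $\bA$. The argument has four steps, following the order announced at the beginning of this section: first $\Gn$, then $\bDn$, then $\bBn$, and finally $\bB$ via the boundary perturbation representation of \autoref{lem:representation}.

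\emph{Step 1: $\Gn$.} We write $\Gn$ in triangular form by one further similarity. Put $w := \up$ and $z := \vp - \delta^{-1}\up$, i.e.\ undo part of $\bS$ inside $\Gn$. Then the second component of $\Gn$ becomes $\delta\Ln$ acting on the "good" variable, plus lower-order terms. Concretely, $\Gn = \begin{pmatrix} -\delta^{-1} & 1 \\ -\delta^{-2} & \delta\Ln + \delta^{-1}\end{pmatrix}$ on $\rH^{1,q}(\Omegap)\times\rL^q(\Omegap)$ splits as
\[
\Gn = \begin{pmatrix} -\delta^{-1} & 0 \\ 0 & \delta\Ln \end{pmatrix} + P,
\]
where $P$ collects the entries $1$, $-\delta^{-2}$ and $\delta^{-1}$. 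The diagonal part has maximal $\rL^r$-regularity: the first entry is a bounded operator on $\rH^{1,q}$, and $\delta\Ln$ is a parameter elliptic operator with bounded $\mathcal H^\infty$-calculus by \cite{DDHPV:04}. The off-diagonal entry $1 \colon \mathrm{D}(\Ln) \to \rH^{1,q}(\Omegap)$ is bounded from $\rH^{1,q}$ (indeed from $\mathrm{D}((-\Ln)^{1/2})$) into $\rH^{1,q}$. Hence $P$ is relatively bounded with relative bound $0$; more precisely, it is bounded from the complex interpolation space of order $1/2$ of the diagonal part. The standard perturbation result for lower-order perturbations (e.g.\ \cite{PS:16}) then gives maximal regularity of $\Gn$ on finite intervals.

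\emph{Step 2: $\bDn$ and $\bBn$.} The operator $\bDn = \diag(\Gn, k\Deltan, \An)$ has maximal regularity, because $k\Deltan$ and $\An$ do by \cite{PS:16} and \cite{DHP:04}. Next, $\bBn = \bDn + \bQ$. The perturbation $\bQ$ maps $\mathrm{D}(\bDn)$ into $\{0\}\times\rL^q\times\rL^q\times\{0\}$ and only uses first derivatives of $\vp$ and $\pp$. It is therefore bounded on the complex interpolation space $\bZ_{1/2}$ of \autoref{lem:interpolation spaces A0}(ii), and the lower-order perturbation result again gives maximal regularity of $\bBn$.

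\emph{Step 3: $\bB$ via boundary perturbation.} By \autoref{lem:representation},
\[
\bB = \bigl(\bB_{-\frac12} + (\lambda-\bB_{-1})\mathbf L^{\bB}_\lambda\Phi\bigr)|_{\Xn}.
\]
The boundary operator $\Phi$ involves only traces of zeroth order: traces of $\up,\vp,\pp,\uf$, but no derivatives. Its order is therefore strictly below that of the Neumann-type operator $\mathrm{L}$. We then apply a maximal regularity result for boundary perturbations of this "Desch--Schappacher/Greiner" type, for instance \cite{BE:25} or the argument of \cite[Lemma 3.4]{BHR:24}. It suffices that $\Phi$ is bounded from $\bZ_{\theta}$ into $\rW^{1-\frac1q,q}(\Gamma)^3$ for some $\theta<1$; here $\theta>\frac12+\frac{1}{2q}$ close to $1$ works. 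Concretely, we would write the solution as $\bw = \bw_0 + \mathbf L^{\bB}_\lambda \Phi \bw$ and treat $\Phi\bw$ by a fixed point argument on short time intervals. The key estimate is
\[
\|\Phi \bw\|_{\rL^r(0,T;\rW^{1-1/q,q}(\Gamma))} \le \varepsilon(T)\,\|\bw\|_{\E_{1}},
\]
which follows from the mixed derivative embedding into $\rH^{\theta,r}(\bZ_{1-\theta})$ spaces. We expect this step to be the main obstacle. The difficulty is the first component $\up$: it lies only in $\rH^{1,q}$, so its trace lies in $\rW^{1-1/q,q}(\Gamma)$ and is not of lower order. To handle it, we use the special structure exploited in \autoref{lem:L0}(iv) and the following remark: on the relevant solution space $\up$ inherits the regularity of $\vp$, since $\partial_t\up = -\delta^{-1}\up + \vp$ is an ODE. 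Integrating this ODE in time shows that $\up \in \rH^{1,r}(0,T;\rH^{2\theta,q})$ whenever $\vp$ is, modulo the initial datum, which is handled separately by the trace space.

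\emph{Step 4: conclusion.} The previous steps give maximal regularity of $\bB$ on finite intervals $J=(0,T)$. Since maximal regularity on finite intervals is invariant under shifts, this is the desired statement. (The unshifted version on $\R_+$ then follows from \autoref{thm:spectrum A}, but it is not needed here.) Transferring back via $\bS$ yields the claim for $\bA$.
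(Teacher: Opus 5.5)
\textbf{Steps 1 and 2.} These are fine and follow the paper (\autoref{lem:G0 MR}, \autoref{lem:A0 mr}). Splitting $\Gn$ directly as $\diag(-\delta^{-1},\delta\Ln)$ plus a perturbation bounded on the order-$\frac12$ space is a legitimate, simpler substitute for the paper's $(-\Ln)^{1/2}$-similarity.

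\textbf{Step 3 has a gap.} This step carries the content of the theorem. The Desch--Schappacher/Staffans--Weiss type result you invoke is, in the paper, \cite[Proposition 3.4]{BHR:24}. It needs the extrapolated perturbation $(\lambda-\bB_{-1})\mathbf{L}^{\bB}_\lambda\Phi$ to be of order strictly less than one, i.e.\ to map some $\bZ_\alpha$ into $\bZ_{\beta-1}$ with $\alpha<\beta$. The range of $\mathbf{L}^{\bB}_\lambda$ violates the Neumann-type conditions $\varepsilon(\vp)n_\Gamma=0$, $\partial_{n_\Gamma}\pp=0$, $\sigmaf n_\Gamma=0$. It therefore lies in $\bZ_\beta$ only for $\beta<\frac12+\frac1{2q}$. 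So your hypothesis ``$\Phi$ bounded on $\bZ_\theta$ for some $\theta<1$'' is not sufficient. Your choice $\theta>\frac12+\frac1{2q}$ is exactly the regime where the perturbation has order larger than one.

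The missing observation is that $\theta=\frac12$ already works. Every component of $\bZ_{1/2}$ lies in $\rH^{1,q}$, so the trace theorem gives $\Phi\in\mathcal{L}(\bZ_{1/2},\rW^{1-\frac1q,q}(\Gamma)^3)$. By \autoref{lem:L0_real} this yields $\mathbf{L}^{\bB}_\lambda\Phi\in\mathcal{L}(\bZ_{1/2},\bZ_\beta)$ for $\frac12<\beta<\frac12+\frac1{2q}$, and \cite[Proposition 3.4]{BHR:24} applies. In this formulation $\up$ is no obstacle at all. The first slot of every $\bZ_\theta$ is $\rH^{1,q}(\Omegap)$, and $\mathbf{L}^{\bB}_\lambda$ returns $\rH^{1,q}$ there, so your ODE argument is not needed.

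\textbf{The fixed-point fallback.} This does not close as written either. Writing $\bw(t)=\bw_0(t)+\mathbf{L}^{\bB}_\lambda\Phi\bw(t)$ with the stationary lifting, and putting $\bw_0$ into a problem with homogeneous boundary conditions, requires $\partial_t\Phi\bw$. That means traces of $\partial_t\vp\in\rL^q(\Omegap)$, which do not exist. A time-dependent lifting would instead need maximal regularity of the coupled $\bBm$-system with inhomogeneous Neumann-type data. There the data must carry, besides $\rL^r(0,T;\rW^{1-\frac1q,q}(\Gamma))$, time regularity of order $\frac12-\frac{1}{2q}$ with values in $\rL^q(\Gamma)$ (cf.\ \cite{DHP:04}). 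Your smallness estimate controls only the former norm, and the inhomogeneous theory is not supplied.

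\textbf{Step 4.} The theorem is meant as unshifted maximal regularity on $\R_+$; it is used with a $T$-independent constant in \autoref{sec:wellposedness}. So the reduction through \autoref{thm:spectrum A}, which you put in a parenthesis, belongs in the proof.
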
 
	
	Again, we start with the maximal regularity of the damped wave type part.
	
	\begin{lem}\label{lem:G0 MR}
		The operator $\Gn$ admits a bounded $\mathcal{H}^\infty$-calculus on $\rH^{1,q}(\Omegap) \times \rL^q(\Omegap)$. In particular, it has maximal $\rL^p$-regularity on $\rH^{1,q}(\Omegap) \times \rL^q(\Omegap)$.
	\end{lem}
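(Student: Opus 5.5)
The plan is to reuse the similarity transformation from the proof of \autoref{lem:G0 spectrum}. Set $S = \diag((-\Ln)^{\frac12},1)$, which maps $\rH^{1,q}_{\Gammap}(\Omegap)\times \rL^q(\Omegap)$ isomorphically onto $\rL^q(\Omegap)\times\rL^q(\Omegap)$. Then $C = S\Gn S^{-1} = M((-\Ln)^{\frac12})$ with domain $\rL^q(\Omegap)\times\mathrm{D}(\Ln)$. A bounded $\mathcal{H}^\infty$-calculus is invariant under similarity, so it suffices to prove it for $C$ on $\rL^q\times\rL^q$.

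We write $A:=(-\Ln)^{\frac12}$. By \cite{DDHPV:04}, $-\Ln$ has a bounded $\mathcal{H}^\infty$-calculus of angle $0$ (or of arbitrarily small angle) and is invertible. Hence $A$ also has a bounded $\mathcal{H}^\infty$-calculus of small angle, and $0\in\rho(A)$.

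Next we decompose $C$ into a principal part and a bounded perturbation:
\begin{equation*}
C = \begin{pmatrix} -\delta^{-1} & A \\ 0 & -\delta A^2 \end{pmatrix} + \begin{pmatrix} 0 & 0\\ -\delta^{-2}A^{-1} & \delta^{-1}\end{pmatrix} =: C_1 + P .
\end{equation*}
Here $P$ is bounded because $A^{-1}$ is bounded. The operator $C_1$ is upper triangular. For $\lambda$ in a sector we compute
\begin{equation*}
(\lambda - C_1)^{-1} = \begin{pmatrix} (\lambda+\delta^{-1})^{-1} & (\lambda+\delta^{-1})^{-1}A(\lambda+\delta A^2)^{-1} \\ 0 & (\lambda+\delta A^2)^{-1}\end{pmatrix}.
\end{equation*}
The off-diagonal entry is $f_\lambda(A)$ with $f_\lambda(z)=z(\lambda+\delta^{-1})^{-1}(\lambda+\delta z^2)^{-1}$, which decays like $|\lambda|^{-3/2}$ uniformly in $z$. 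So $C_1+\nu$ is sectorial for some shift $\nu$.

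For the $\mathcal{H}^\infty$-calculus of $C_1+\nu$ we use the triangular structure. For $g\in\mathcal{H}^\infty_0(\Sigma_\phi)$, the Cauchy integral gives a triangular matrix. Its diagonal entries are $g(-\delta^{-1}+\nu)$ and $g(\nu-\delta A^2)$, both bounded by $\|g\|_\infty$ thanks to the calculus of $A^2$. The off-diagonal entry is
\begin{equation*}
\frac{1}{2\pi i}\int_{\partial\Sigma} g(\lambda)\,(\lambda-\nu+\delta^{-1})^{-1}A(\lambda-\nu+\delta A^2)^{-1}\,d\lambda .
\end{equation*}
Since $\|A(\lambda+\delta A^2)^{-1}\|\lesssim |\lambda|^{-1/2}$ and $|\lambda+\delta^{-1}|^{-1}\lesssim(1+|\lambda|)^{-1}$, the integrand is bounded by $\|g\|_\infty(1+|\lambda|)^{-3/2}$, which is integrable. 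So this entry is bounded by $C\|g\|_\infty$. Thus $C_1+\nu$ has a bounded $\mathcal{H}^\infty$-calculus on $\rL^q\times\rL^q$.

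Adding the bounded perturbation $P$ preserves the bounded $\mathcal{H}^\infty$-calculus after a shift, by the standard perturbation result (Kalton--Kunstmann--Weis, or \cite[Cor.~3.3.15]{PS:16}). Therefore $C+\nu'$ has one for large $\nu'$. To remove the shift, we use \autoref{lem:G0 spectrum}: $s(C)<0$, so $-C$ is invertible and sectorial. A bounded $\mathcal{H}^\infty$-calculus for $\nu'-C$ together with invertibility of $-C$ gives one for $-C$, by the standard lemma on shifting invertible sectorial operators \cite[Prop.~3.3.9]{PS:16}.

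Maximal $\rL^p$-regularity then follows because $\rL^q\times\rL^q$ is a UMD space and the calculus has angle $<\pi/2$; the angle is smaller than $\pi/2$ since $\delta A^2$ has angle near $0$. The step needing the most care is this last one: verifying that the calculus angle of $-C$ after perturbation and unshifting stays below $\pi/2$. The main tool there is the spectral bound from \autoref{lem:G0 spectrum}, which keeps $\sigma(C)$ inside a left sector.
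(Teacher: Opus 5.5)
Your proposal is correct, but it handles the key step differently from the paper.

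Both arguments use the same similarity transform $S$. After it, the paper splits $C=D+Q_1+Q_0$ with $D=\diag(0,\delta\Ln)$, $Q_1$ the off-diagonal entry $(-\Ln)^{1/2}$, and $Q_0$ bounded. It then cites the lower-order perturbation theorem of \cite{DDHPV:04}: $Q_1$ is bounded relative to $(\nu-D)^{1/2}$, so it is a perturbation of order $\frac12<1$, and the shifted $C$ inherits the bounded $\mathcal{H}^\infty$-calculus of $\nu-D$.

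You instead keep $(-\Ln)^{1/2}$ inside an upper-triangular principal part $C_1$ and prove its calculus by hand. The off-diagonal entry becomes an absolutely convergent Cauchy integral because $\|A(\lambda+\delta A^2)^{-1}\|\lesssim|\lambda|^{-1/2}$. Only the bounded remainder $P$ is left to a perturbation theorem. The mechanism is the same: the cited perturbation theorem is proved by exactly this kind of resolvent estimate with integrable decay $|\lambda|^{\alpha-2}$. The paper's route is shorter; yours is self-contained and makes the angle bookkeeping explicit.

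You also explicitly remove the shift for the calculus itself, using invertibility and \autoref{lem:G0 spectrum}. Note that $s(C)<0$ alone does not give sectoriality of $-C$; you need it together with sectoriality of $\nu'-C$. The paper only reduces maximal regularity to the shifted operator and leaves the unshifted $\mathcal{H}^\infty$-claim implicit.

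One sign correction. The sectorial operator whose calculus you compute must be $\nu-C_1$, with diagonal entries $\nu+\delta^{-1}$ and $\nu+\delta A^2$, not $C_1+\nu$. In fact $-C_1$ is already invertible and sectorial, so no shift is needed there. As you wrote it, $\nu-\delta A^2$ is not sectorial, so $g(\nu-\delta A^2)$ is undefined. Likewise, the resolvent in your off-diagonal integrand should be that of $\nu-C_1$.
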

	\begin{proof}
		First, note that in consideration of \autoref{lem:G0 spectrum} is suffices to show that $\Gn+\omega$ admits maximal $\rL^p$-regularity for a large constant $\omega \in \R$.
		
		Since $\Ln$ is parameter-elliptic, it admits a bounded $\mathcal{H}^\infty$-calculus on $\rL^q(\Omegap)$ by \cite{DDHPV:04}. 
		In order to get make the space isotropic, we use again the isomorphism
		\begin{equation*}
			S := \begin{pmatrix}
				(-\Ln)^{\frac{1}{2}} & 0 \\
				0 & 1
			\end{pmatrix}
			\colon \rH^{1,q}_{\Gammap}(\Omegap) \times \rL^q(\Omegap)
			\to \rL^q(\Omegap) \times \rL^q(\Omegap)
		\end{equation*}
		and obtain as above
		\begin{align*} 
			C := S\Gn S^{-1}
			= 
			\begin{pmatrix}
				-\delta^{-1} & (-\Ln)^{\frac{1}{2}} \\
				-\delta^{-2}(-\Ln)^{-\frac{1}{2}} & \delta \Ln+\delta^{-1} 
			\end{pmatrix}
			\qquad \text{ with domain }
			\qquad \mathrm{D}(C) = \rL^q(\Omegap) \times \mathrm{D}(\Ln) .
		\end{align*}
		We split the operator $C$ as
		\begin{equation*}
			C
			= 	
			\begin{pmatrix}
				0 & 0 \\
				0 & \delta \Ln
			\end{pmatrix}
			+
			\begin{pmatrix}
				0 & (-\Ln)^{\frac{1}{2}} \\
				0 & 0
			\end{pmatrix}
			+ 
			\begin{pmatrix}
				-\delta^{-1} & 0 \\
				-\delta^{-2}(-\Ln)^{-\frac{1}{2}} & \delta^{-1} 
			\end{pmatrix}
			=: D + Q_1 + Q_0 .
		\end{equation*}
		Note that $Q_0$ is a bounded operator and $Q_1$ is relatively $C^{\frac{1}{2}}$-bounded. 
		Now the claim follows from \cite{DDHPV:04}.
	\end{proof}
	In the next step, we analyze the maximal regularity of $\bBn$. 
	
	\begin{lem}\label{lem:A0 mr}
		The operator $\bBn$ admits maximal $\rL^p$-regularity on $\Xn$ for all $p \in (1,\infty)$.
	\end{lem}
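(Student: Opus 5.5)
We will prove maximal $\rL^p$-regularity of $\bBn$ by perturbing the diagonal operator $\bDn = \diag(\Gn, k\Deltan, \An)$ with $\bQ$, using the splitting \eqref{eq:splitting}, $\bBn = \bDn + \bQ$. The plan has two steps: maximal regularity of a shifted operator $\bBn - \omega$, followed by removal of the shift via the spectral bound.

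\emph{Diagonal part.} By \autoref{lem:G0 MR}, $\Gn$ has a bounded $\mathcal{H}^\infty$-calculus on $\rH^{1,q}(\Omegap)\times \rL^q(\Omegap)$. The Neumann--Dirichlet Laplacian $k\Deltan$ on $\rL^q(\Omegap)$ and the Stokes operator $\An$ on $\rL^q_\sigma(\Omegaf)$ with the stress condition on $\Gamma$ and Dirichlet condition on $\Gammaf$ both have a bounded $\mathcal{H}^\infty$-calculus; this is standard for parameter-elliptic boundary value problems satisfying the Lopatinskii--Shapiro condition, see \cite{DDHPV:04} and \cite{PS:16}. The diagonal operator $\bDn$ therefore has a bounded $\mathcal{H}^\infty$-calculus on $\Xn$. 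Since $\Xn$ is UMD, $\bDn$ has maximal $\rL^p$-regularity, first for the shifted operator and, by \autoref{lem:A00 spectrum}, without shift.

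\emph{Perturbation.} The operator $\bQ$ is of lower order. Its lower part $\bQ_l$ maps $(\up,\vp)\in \rH^{1,q}\times \rH^{1,q}$ to $\rL^q$ through $\div$, and $\rH^{1,q}$ is the complex interpolation space $[\rL^q,\mathrm{D}(\Ln)]_{1/2}$ for the $\vp$-component. Its upper part $\bQ_r$ maps $\pp\in \mathrm{D}(\Deltan)^{1/2}\subset \rH^{1,q}$ to $\rL^q$ via $\nabla$. Hence
\[
\|\bQ \bw\|_{\Xn} \le C\|\bw\|_{[\Xn,\mathrm{D}(\bDn)]_{1/2}},
\]
so $\bQ$ is relatively $(-\bDn)^{1/2}$-bounded. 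For the $\up$-component $\div\up$ only needs $\up\in\rH^{1,q}$, which is the ground space itself, so that contribution is even bounded. An interpolation inequality gives relative $\bDn$-bound $0$ in the sense
\[
\|\bQ\bw\|\le \varepsilon\|\bDn\bw\|+C_\varepsilon\|\bw\|.
\]
The standard perturbation theorem for maximal regularity (small relative bound, or lower-order perturbations of operators with bounded $\mathcal{H}^\infty$-calculus as in \cite{DDHPV:04} or \cite[Section 3.4]{PS:16}) then yields maximal $\rL^p$-regularity of $\bBn-\omega$ for some $\omega>0$ on finite intervals $J=(0,T)$.

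\emph{Removing the shift.} Maximal $\rL^p$-regularity on finite intervals means $\bBn$ generates an analytic semigroup and has maximal regularity on bounded intervals. Since \autoref{lem:A0 spectrum} gives $s(\bBn)<0$, and for analytic semigroups the growth bound equals the spectral bound, the semigroup is exponentially stable. Consequently maximal regularity holds on $\R_+$ for the unshifted operator, and $p$-independence is standard. The main obstacle is verifying the relative $(-\bDn)^{1/2}$-boundedness componentwise: the $\Gn$-block's fractional domain must be identified as $\rH^{1,q}\times\rH^{1,q}_{\Gammap}$ (below the boundary-condition threshold), which I would obtain from the similarity $S$ of \autoref{lem:G0 MR} together with the interpolation result \autoref{lem:interpolation spaces A0}(ii) at $\theta=\tfrac12$.
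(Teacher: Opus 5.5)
Your proposal is correct and follows essentially the same route as the paper. Both arguments split $\bBn=\bDn+\bQ$, get maximal regularity of the shifted diagonal part from \autoref{lem:G0 MR} together with the standard results for $k\Deltan$ and $\An$, treat $\bQ$ as a column-wise perturbation of relative bound $0$ (with respect to $\Gn$ and $k\Deltan$), and remove the shift using $s(\bBn)<0$ from \autoref{lem:A0 spectrum}.

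Your write-up is more detailed than the paper's. The identification of $\mathrm{D}((-\Gn)^{1/2})$ that you flag as the main obstacle is not actually needed. Relative bound $0$ already follows from $\|\Ln \vp\|_{\rL^q}\lesssim \|\Gn(\up,\vp)\|+\|(\up,\vp)\|$, elliptic regularity for $\Ln$ and $\Deltan$, and the elementary inequality $\|\nabla v\|_{\rL^q}\le \varepsilon\|v\|_{\rH^{2,q}}+C_\varepsilon\|v\|_{\rL^q}$.
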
  
	\begin{proof}
		First, note that in consideration of \autoref{lem:A0 spectrum} is suffices to show that $\bBn + \omega$ admits maximal $\rL^p$-regularity for a large constant $\omega \in \R$. 			
		We recall the splitting
		\begin{equation*}
			\bBn = \bDn + \bQ 
		\end{equation*}
		from \eqref{eq:splitting}. 
		Using \autoref{lem:G0 MR} it follows that the first operator matrix on the right hand side admits maximal $\rL^p$-regularity on $\Xn$.
		Since $(\delta \alpha \div,-\alpha \div)$ is relatively $\Gn$-bounded and $-\alpha \nabla$ is relatively $k\Deltan$-bounded with bound $0$, \cite{DDHPV:04} implies the claim. 
	\end{proof}
	Now, we are able to prove \autoref{thm:mr}.
	
	\begin{proof}
		First, note that in consideration of \autoref{thm:spectrum A} is suffices to show that $\bA +\omega$ admits maximal $\rL^p$-regularity for a large constant $\omega \in \R$. Moreover, note that by similarity transform it suffices to show maximal $\rL^p$-regularity of $\bB+\omega$ for for a large constant $\omega \in \R$. We recall from \autoref{lem:representation} that
		\begin{equation*}
			\bB
			= (\bB_{-1} + (\lambda-\bB_{-1}) \mathbf{L}_\lambda^{\bB} \Phi)|_{\Xn}. 
		\end{equation*}		
		By the mapping properties of $\mathbf{L}_\lambda^{\bB}$ from \autoref{lem:L0_real} and the trace theorem we obtain 
		\begin{equation*}
			\mathbf{L}^{\bB}_\lambda \Phi \in \mathcal{L}(\bZ_{\frac{1}{2}},\bZ_{\beta})
		\end{equation*}
		for $\beta < \frac{1}{2}\bigl(1+\frac{1}{q}\bigr)$, where $\bZ_\theta$ denote the interpolation spaces from \autoref{lem:interpolation spaces A0}.
		This implies 
		\begin{equation*}
			(\lambda-\bB_{-1}) \mathbf{L}^{\bB}_\lambda \Phi \in \mathcal{L}(\bZ_{\frac{1}{2}},\bZ_{\beta-1})
		\end{equation*}
		and the result follows from \cite[Proposition 3.4]{BHR:24}.
	\end{proof}


	\section{Strong wellposedness}
	\label{sec:wellposedness}
	
	We recall from \autoref{sec:visco} that the system \eqref{eq:biot}--\eqref{eq:nse}--\eqref{eq:bjs} supplemented to \eqref{eq:bdry} and \eqref{eq:initial} can be reformulated as a semilinear evolution equation on the ground space $\Xn$
	\begin{equation}\label{eq:aCP}
		\frac{d}{dt} \mathbf{w}
		=\
		\bA \mathbf{w}+ \bF(\mathbf{w},\mathbf{w}) ,
		\quad  		
		\mathbf{w}(0)=( {u}_0, {v}_0, p_0, {u}^{f}_0)^\top,
	\end{equation}
	with $\mathbf{w}:= (\up,\vp,\pp,\uf)^\top$, where  $\bA$ is as above and the nonlinear term $\bF(\bw',\bw)
	= (0,0,0, -\mathcal{P} ({u'}_f \cdot \nabla \uf))^\top$ as in Section \ref{sec:visco}. Furthermore, we consider the maximal regularity spaces 
	\begin{align*}
		\mathbb{E}_{1,\mu}(T) &:= \rH^{1,r}_{\mu}(0,T;\Xn) \cap  \rL^r_{\mu}(0,T;\mathbf{X}_1) \mbox{ and } \\
		_0 \mathbb{E}_{1,\mu}(T) &:= \{{w}\in\rH^{1,r}_{\mu}(0,T;\Xn) \cap  \rL^r_{\mu}(0,T;\mathbf{X}_1) \mid {w}(0) = 0 \}, 
	\end{align*}
	the data space $\E_{0,\mu}(T) := \rL^r_\mu(0,T;\Xn)$ as well as the trace space ${X}_{\gamma,\mu} := [\Xn,\mathbf{X}_1]_{\mu-\frac{1}{r},r}$. Furthermore, we use the spaces
	\begin{align*}
		\tilde{\mathbb{E}}_{1,\mu}^f(T) &:= \rH^{1,r}_{\mu}(0,T;\rL^q(\Omegaf)) \cap  \rL^r_{\mu}(0,T;\rH^{2,q}(\Omegaf)) \mbox{ and } \\
		_0 \tilde{\mathbb{E}}^f_{1,\mu}(T) &:= \{{u^f}\in\rH^{1,r}_{\mu}(0,T;\rL^q(\Omegaf)) \cap  \rL^r_{\mu}(0,T;\rH^{2,q}(\Omegaf)) \mid {u^f}(0) = 0 \}  
	\end{align*}
	and $\tilde{\E}_{0,\mu}^f := \rL^q_{\mu}(0,T;\rL^q(\Omegaf))$.
	 The following estimate for the Navier-Stokes nonlinearity is essentially known, see \cite{PSW:18}. 
	
	\begin{lem}\label{lem:F}
		Let $r, q \in (1,+\infty)$.
		Then the bilinear mapping  
		\begin{equation*} 
			F \colon \tilde{\E}_{1,\mu}^f(T) \times  \tilde{\E}^f_{1,\mu}(T) \to \tilde{\E}_{0,\mu}^f(T) \colon (u_1,u_2) \mapsto u_1 \cdot \nabla u_2
		\end{equation*} 
		satisfies
		\begin{equation*}
			\|F(u_1,u_1)-F(u_2,u_2)\|_{\tilde{\E}^f_{0,\mu}(T)} \leq C \cdot \bigl( \| u_1 \|_{\tilde{\E}^f_{1,\mu}(T)} + \| u_2 \|_{\tilde{\E}^f_{1,\mu}(T)} \bigr) \cdot \| u_1 - u_2 \|_{\tilde{\E}^f_{1,\mu}(T)} 
		\end{equation*}
		for all $u_1, u_2 \in \tilde{\E}^f_{1,\mu}(T)$.
	\end{lem}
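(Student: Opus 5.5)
By bilinearity, $F(u_1,u_1)-F(u_2,u_2) = F(u_1-u_2,u_1) + F(u_2,u_1-u_2)$. It therefore suffices to prove the single bilinear estimate
\begin{equation*}
\|u\cdot\nabla w\|_{\tilde{\E}^f_{0,\mu}(T)} \leq C\, \|u\|_{\tilde{\E}^f_{1,\mu}(T)} \|w\|_{\tilde{\E}^f_{1,\mu}(T)},
\end{equation*}
with $C$ independent of $T$ (at least for functions vanishing at $t=0$), and then apply the triangle inequality.

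\emph{Step 1: spatial estimate.} For fixed $t$, Hölder's inequality gives $\|u\cdot\nabla w\|_{\rL^q} \leq \|u\|_{\rL^{s_1}}\|\nabla w\|_{\rL^{s_2}}$ with $\frac{1}{s_1}+\frac{1}{s_2}=\frac1q$. The Sobolev embeddings $\rH^{2\beta,q}(\Omegaf)\hookrightarrow \rL^{s_1}$ and $\rH^{2\beta,q}(\Omegaf)\hookrightarrow \rH^{1,s_2}$ then yield $\|u\cdot\nabla w\|_{\rL^q}\lesssim \|u\|_{\rH^{2\beta,q}}\|w\|_{\rH^{2\beta,q}}$. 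Following \cite{PSW:18}, we take the symmetric exponent $\beta=\frac14(1+\frac3q)$, so that $2\beta-\frac3q+\frac3{s_1}= 0$ and $2\beta-1-\frac3q+\frac3{s_2}=0$, which is consistent with $\frac1{s_1}+\frac1{s_2}=\frac1q$. If $\beta<\frac12$ we use $\beta=\frac12$ instead, since the $\rH^{1,q}$ bounds then suffice. Bounded domains only make the embeddings easier.

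\emph{Step 2: time integrability.} Using the mixed-derivative embedding for weighted spaces,
\begin{equation*}
\tilde{\E}^f_{1,\mu}(T)\hookrightarrow \rH^{1-\beta,r}_\mu(0,T;\rH^{2\beta,q}(\Omegaf)),
\end{equation*}
followed by the weighted Sobolev embedding in time, we obtain $\tilde{\E}^f_{1,\mu}(T)\hookrightarrow \rL^{2r}_{\sigma}(0,T;\rH^{2\beta,q})$. The admissible weight $\sigma$ satisfies $2(1-\sigma)=1-\mu$ (up to the exact weight bookkeeping), provided $1-\beta-\frac1r\ge -\frac1{2r}$ after accounting for the weights. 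The condition $\mu\ge\mu_c$ together with $\frac{2}{3r}+\frac1q\le1$ is exactly what makes the exponents admissible. Hölder's inequality in time with weight $t^{1-\mu}=t^{(1-\sigma)}t^{(1-\sigma)}$ then gives the product estimate in $\rL^r_\mu(0,T;\rL^q)$.

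\emph{Step 3: uniformity in $T$.} For arbitrary elements of $\tilde{\E}^f_{1,\mu}(T)$ the embedding constants may depend on $T$. To handle this, one either restricts to ${}_0\tilde{\E}^f_{1,\mu}(T)$ (extension by zero, giving $T$-independent constants) or fixes $T_0$ and works on $T\le T_0$, as in \cite{PSW:18}. This suffices for the fixed-point argument that follows.

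The main obstacle is Step 2, specifically verifying that the weighted time embedding holds for the full range $\mu\in[\mu_c,1]$ with the critical relation between $\beta$, $r$, $q$ and $\mu$, i.e. checking $2\beta\le 2\mu-\frac{2}{r}+\frac{2}{r}\cdot\frac12$-type conditions. At the critical $\mu=\mu_c$ this inequality is an equality, and the embedding must be used at its endpoint. I would import this endpoint directly from the weighted Sobolev embedding results of Prüss--Simonett--Wilke rather than reprove it.
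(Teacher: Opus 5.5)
Your proposal is correct and follows essentially the paper's route. The paper gives no argument beyond citing \cite{PSW:18}, and the chain you reconstruct is exactly the one the paper itself runs in the proof of \autoref{cor:bjs blow up}: H\"older--Sobolev with $2\beta=\frac12+\frac{3}{2q}$, then mixed derivatives, then a weighted Sobolev embedding into $\rL^{2r}_\sigma$ with $2(1-\sigma)=1-\mu$, then H\"older in time. You also rightly supply the hypothesis $\mu\ge\mu_c$ (with $\frac{2}{3r}+\frac1q\le 1$ ensuring $\mu_c\le 1$), which the lemma as stated omits but genuinely needs.

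Two small corrections:
\begin{enumerate}
\item The weight bookkeeping gives the admissibility condition $\mu\ge 2\beta-1+\frac1r$, which is an equality exactly at $\mu=\mu_c$. It is not $2\beta\le 2\mu-\frac1r$ as you wrote.
\item At $q=3$ your symmetric split forces $s_1=\infty$. This fails because $\rH^{1,3}(\Omegaf)$ does not embed into $\rL^\infty(\Omegaf)$. There you should take any $2\beta\in(1,\mu+1-\frac1r]$; this interval is nonempty since $\mu>\frac1r$ in part (b).
\end{enumerate}
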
	
	We now give the proof of our main result concerning local strong wellposedness and global strong wellposedness for small data for the system \eqref{eq:biot}--\eqref{eq:nse}--\eqref{eq:bjs}.

	\begin{proof}[Proof of \autoref{thm:main}.]
		Let $\mu_0 := \frac{1}{2} + \frac{1}{2q}+\frac{1}{r}$ and $\Xn$ and $\Xe$ defined as in \eqref{eq:X0} and \eqref{eq:X1}, respectively.
	It follows from \autoref{lem:interpolation spaces} that 
		for initial data $\bw_0 := ({u}_0, {v}_0, p_0, {u}_0^f)^\top \in \rH^{1,q}_{\Gammap}(\Omegap) \times \rB_{q,r,\Gammap}^{2\mu-\frac{2}{r}}(\Omegap) \times
		\rB_{q,r,\Gammap}^{2\mu-\frac{2}{r}}(\Omegap)\times
		\rB_{q,r,\Gammaf,\sigma}^{2\mu-\frac{2}{r}}(\Omegaf)$ for $\mu > \mu_0$, belong to the interpolation space $X_{\gamma,\mu} = (\Xn,\Xe)_{\mu-\frac{1}{r},r}$. Using \autoref{thm:mr}, we obtain that $w_*(t) := e^{t\bA} \bw_0 \in \E_{1,\mu}(T)$. Now, $\bar{\bw}:= \bw-\bw^\ast$ satisfies
		\begin{equation*}
			\frac{\d}{\d t} \bar{\bw}
			=\
			\bA \bar{\bw} + \bF(\bar{\bw}+\bw^*,\bar{\bw}+\bw^*) ,
			\quad  		
			\bar{\bw}(0)=0 .
		\end{equation*}
		Rewriting \eqref{eq:aCP} as a fixed point equation, we obtain 
		\begin{equation*}
			\bar{\bw} = e^{t\bA} \ast \bF(\bar{\bw}+\bw^*,\bar{\bw}+\bw^*) =: \Phi(\bar{\bw}) .
		\end{equation*}
		Using \autoref{thm:mr} and \autoref{lem:F} we obtain
		\begin{align*}
			\| e^{t\bA} \ast\bF (\bar{\bw}+w^*,\bar{\bw}+\bw^*) \|_{\E_{1,\mu}}
			&\leq C \cdot \| \bF (\bar{\bw}+\bw^*,\bar{\bw}+\bw^*) \|_{\E_{0,\mu}} = C \cdot \| F(\bar{u}+u^\ast,\bar{u}+u^\ast) \|_{\rL^r_{\mu}(0,T;\rL^q_{\sigma}(\Omegaf))} \\
			&\leq C \cdot \| \bar{u} + u^\ast \|_{\tilde{\E}_{1,\mu}^f}^2 
			\leq C \cdot \| \bar{\bw} + \bw^\ast \|_{\E_{1,\mu}}^2 \leq 2 C \cdot \bigl( \| \bar{\bw} \|_{\E_{1,\mu}}^2 + \| \bw^\ast \|_{\E_{1,\mu}}^2  \bigr) .
		\end{align*}
		Consider $B_r(0) \subset { }_0\E_{1,\mu}(T)$. Choosing $T >0$ small we see that $2 C \cdot\| \bw^\ast \|_{\E_{1,\mu}}^2  < \frac{r}{2}$ and $2 C \cdot \| \bar{\bw} \|_{\E_{1,\mu}}^2< \frac{r}{2}$. Hence $\Phi \colon B_r(0) \to B_r(0)$ is a self-map. Similar, we see with \autoref{lem:F} that 
		\begin{equation}
			\| \Phi(\bar{\bw}_1) - \Phi(\bar{\bw}_2) \|_{\E_{1,\mu}}
			\leq C \cdot (\| \bw_1\|_{\E_{1,\mu}}
			+\| \bw_2\|_{\E_{1,\mu}}) \cdot \| \bw_1-\bw_2\|_{\E_{1,\mu}}
			\leq \tilde{C} \cdot  \| \bw_1-\bw_2\|_{\E_{1,\mu}}
		\end{equation}
		with $\tilde{C} < 1$ for $T$ small. Hence, $\Phi \colon B_r(0) \to B_r(0)$ is a contraction and the local existence follows from Banach's fixed point theorem. 
		Further, note that the maximal regularity constant is independent of $T$, since the semigroup is exponentially stable, and hence we may also choose $r$ small instead of $T$. This yields global existence for small initial data. 
	\end{proof}

	Finally, we prove the finite-in-time blow up criterion. 
	
	\begin{proof}[Proof of {\autoref{cor:bjs blow up}}.]
		Consider the trace space ${X}_{\gamma,\mu} := [\Xn,\mathbf{X}_1]_{\mu-\frac{1}{r},r}$. For the critical weight $\mu_c := \frac{3}{2q}-\frac{1}{2}+\frac{1}{r}$, we know from \autoref{lem:interpolation spaces}(i) that $\bX_{\gamma,\mu_c}$ is given by
		\begin{equation}
			\bX_{\gamma,\mu_c}
			= \rH^{1,q}(\Omegap) \times  \rB_{q,r,\Gammap}^{\frac{3}{q}-1}(\Omegap) \times  \rB_{q,r,\Gammap}^{\frac{3}{q}-1}(\Omegap)\times \bigl( \rB^{\frac{3}{q}-1}_{q,r,\Gammaf}(\Omegaf) \cap \rL^q_{\sigma}(\Omegaf) \bigr).
			\label{eq:trace spaces}
		\end{equation}	
		Claim (a) follows immediately from the relation \eqref{eq:embedding mr} and the above characterization \eqref{eq:trace spaces}.  
		
		\medskip 
		
		For Claim (b) we follow the proof of \cite[Theorem 2.4(ii)]{PSW:18}.
		Suppose $t_+ < \infty$. 
		By mixed derivative theorem, see \cite{MS:12}, and Sobolev embedding, we obtain the embeddings
		\begin{align*}
			(\rL^r(0,T;\rH^{\frac{3}{q}-1-\frac{2}{r}}(\Omegaf)\cap \rL^q_{\sigma}(\Omegaf));\tilde{\E}_{1,\mu_c}^f(T))_{\frac{1}{2}}
			&\hookrightarrow 
			(\rL^r(0,T);\rH^{\frac{3}{q}-1-\frac{2}{r}};\rH^{\frac{1}{r},r}_{\mu_c}(0,T;\rH^{2-\frac{2}{r},q}(\Omegaf) \cap \rL^q(\Omegaf)) )_{\frac{1}{2}} \\
			&\hookrightarrow 
			\rH^{\frac{1}{2r},r}_{\frac{1+\mu_c}{2}}(0,T;\rH^{\frac{1}{2}+\frac{3}{2q},q}(\Omegaf) \cap \rL^q_{\sigma}(\Omegaf)) \\
			&\hookrightarrow 
			\rL^{2r}_{\tau_c}(0,T;\rH^{s,q}(\Omegaf) \cap \rL^q_{\sigma}(\Omegaf))
		\end{align*}
		with $s = \frac{1}{2}+\frac{3}{2q}$ and $2(1-\tau_c) = 1-\mu_c$, where $\mu_c := \frac{3}{2q}-\frac{1}{2}+\frac{1}{r}$ is the critical weight. We have 
		\begin{equation*}
			\| F(u,u) \|_{\rL^{r}_{\mu_c}(T_0,T;\rL^q(\Omegaf))}
			\leq C \cdot \| u \|_{\rL^{2r}_{\tau_c}(T_0,T;\rH^{s,q}(\Omegaf))}^2
			\leq C \cdot \| u \|_{\rL^r(T_0,T;\rH^{\frac{3}{q}-1+\frac{2}{r}}(\Omegaf))} \cdot \| u \|_{\tilde{\E}_{1,\mu_c}^f(T_0,T)} .
		\end{equation*}
		for $T_0 < T < t_+$ and the constant $C > 0$ is independent of $T$, where the spaces are the obvious modifications of the definitions at the beginning of this section. 
		Using 
		\begin{equation*} 
		X_{\mu_c} \hookrightarrow
		\rH^{1,q}(\Omegap) \times \rH^{\min\{1,\frac{3}{q}-1+\frac{2}{r}\},q}_{\Gammap}(\Omegap) \times \rH^{\frac{3}{q}-1-\frac{2}{r},q}_{\Gammap}(\Omegap) \times \bigl( \rH^{\frac{3}{q}-1-\frac{2}{r},q}_{\Gammaf}(\Omegaf) \cap \rL^q_{\sigma}(\Omegaf) \bigr)
		\end{equation*}
		this implies
		\begin{equation*}
			\| \bF(w,w) \|_{\rL^r_{\mu_c}(T_0,T;\Xn)}
			\leq C \cdot \| \mathbf{w}\|_{\rL^r(T_0,T;X_{\mu_c})} \cdot \| \mathbf{w}\|_{\E_1(T_0,T)} . 
		\end{equation*}
		 We denote by $M >0$ the maximal regularity constant on the interval $[0,t_+]$,
		and set $\eta := \frac{1}{2CM}$. Now, we choose $T_0$ sufficient close to $t_+$ such that $\| \mathbf{w}\|_{\rL^r(T_0,t_+;X_{\mu_c})} \leq \eta$. 
		Maximal regularity implies the estimate
		\begin{equation*}
			\| \mathbf{w}\|_{\E_{1,\mu_c}(T_0,T)}
			\leq M \cdot \bigl( \| \mathbf{w}(T_0) \|_{X_{\gamma,\mu}} + C \eta \| \mathbf{w}\|_{\E_{1,\mu_c}(T_0,T)} \bigr) .
		\end{equation*}
		An absorbing argument yields
		\begin{equation*}
			\| \mathbf{w}\|_{\E_{1,\mu_c}(T_0,T)} \leq 2 M \| w(T_0) \|_{X_{\gamma,\mu}}
		\end{equation*}
		for any $T \in (T_0,t_+)$. This implies 
		\begin{equation*}
			\bw|_{(T_0,t_+)} \in \E_{1,\mu_c}(T_0,t_+) \hookrightarrow \rC([T_0,t_+],X_{\mu_c}) 
		\end{equation*}
		and hence, the solution $w$ can be continued beyond $t_+$.
		This contradicts the choice of $t_+$. 
	\end{proof}

\medskip 

{\bf Acknowledgements}
T.B would like to thank DFG for support through project 538212014. 
M. H acknowledges the support by DFG project FOR 5528. A.R is supported by the Grant RYC2022-036183-I funded by MICIU/AEI/10.13039/501100011033 and by ESF+. A.R has been partially supported by the Basque Government through the BERC 2022-2025 program and by the Spanish State Research Agency through BCAM Severo Ochoa CEX2021-001142-S and through project PID2023-146764NB-I00 funded by MICIU/AEI/10.13039/501100011033 and cofunded by the European Union.


\begin{thebibliography}{99}
	
	
	
	
	\bibitem{AE:18} M. Adler and K.-J. Engel.
	Spectral theory for structured perturbations of linear operators.
	{\it J. Spectr. Theory} {\bf 8} (2018), 1393--1442, {\url{https://dx.doi.org/10.4171/JST/230}}.
	
	
	\bibitem{Ama:19}
	H.~Amann,
	{\it Linear and quasilinear parabolic problems. Vol. II.} Monographs in Mathematics, vol.~106, Birkhäuser/Springer, Cham, 2019,
	{\url{https://doi.org/10.1007/978-3-030-11763-4}}.
	
	
	\bibitem{AKYZ:18} I. Ambartsumyan, E. Khattatov, I. Yotov and P. Zunino.
	A Lagrange multiplier method for a Stokes-Biot fluid-poroelastic structure interaction model.
	{\it Numer. Math.} {\bf 140} (2018), 513--553, {\url{https://dx.doi.org/10.1007/s00211-018-0967-1}}.

	
	\bibitem{Ang:90a} S.B. Angenent.
	Parabolic equations for curves on surfaces. I. Curves with p-integrable curvature.
	{\it Ann. of Math.} {\bf 132} (1990), 451--483, {\url{https://doi.org/10.2307/1971426}}.

	
	
	\bibitem{JLA:80} J.L. Auriault. Dynamic behaviour of a porous medium saturated by a Newtonian fluid. {\it International Journal of Engineering Science} {\bf 18, no. 6} (1980): 775--785, {\url{https://doi.org/10.1016/0020-7225(80)90025-7}}.
	
	\bibitem{AGW:24} G. Avalos, E. Gurvich and J. Webster.
	Weak and Strong Solutions for A Fluid-Poroelastic-Structure
	Interaction via a Semigroup Approach.
	{\it Mathematical Methods in the Applied Sciences} {\bf 48, no. 4} (2025), 4057--4089, {\url{https://doi.org/10.1002/mma.10533}}.
	
	
	\bibitem{BQQ:09} S. Badia, A. Quaini and A. Quarteroni. 
	Coupling Biot and Navier-Stokes equations for modelling fluid-poroelastic media interaction.
	{\it J. Comput. Phys.} {\bf 228} (2009), 7986--8014, {\url{https://dx.doi.org/10.1016/j.jcp.2009.07.019}}.
	
	
	\bibitem{BJ:67} G. Beavers and D. Joseph. 
	Boundary conditions at a naturally permeable wall.
	{\it J. Fluid. Mech.} {\bf 269} (1967), 197--207. {\url{https://dx.doi.org/10.1017/S0022112067001375}}.
	
	
	
	\bibitem{BE:18} T. Binz and K.-J. Engel. 
	Operators with Wentzell boundary conditions and the Dirichlet-to-
	Neumann operator.
	{\it Math. Nachr.} {\bf 292} (2019), 733--746, {\url{https://dx.doi.org/10.1002/mana.201800064}}.
	
	\bibitem{BE:25} T. Binz and K.-J. Engel. 
	Staffans-Weiß perturbation theory: Interpolation spaces.
	preprint (2025).
	
	
	\bibitem{BHR:24} T. Binz, M. Hieber and A. Roy.
	Fluid-structure interaction with poroelastic material: the Beavers-Joseph condition in the strong sense. 
	{\it J. Differential Equ.} {\bf 426} (2025),  660--689, {\url{https://dx.doi.org/10.1016/j.jde.2025.01.042}}.
	
	
	\bibitem{Bio:41} M. Biot. 
	General theory of three-dimensional consolidation.
	{\it J. Appl. Phys.} {\bf 12} (1941), 155--164, {\url{https://dx.doi.org/10.1063/1.1712886}}.
	
	
	\bibitem{Bio:56} M. Biot. 
	Theory of deformation of a porous viscoelastic anisotropic solid.
	{\it J. Appl. Phys.} {\bf 27} (1956), 459--467, {\url{https://dx.doi.org/10.1063/1.1722402}}.
	
	
	\bibitem{BCMW:21} L. Bociu, S. Cani\'c, B. Muha and J. Webster. 
	Multilayered Poroelasticity Interaction with Stokes Flow.
	{\it SIAM J. Math. Anal.} {\bf 53} (2021), 6243--6279, {\url{https://dx.doi.org/10.1137/20M1382520}}.
	
	\bibitem{BGSW:16} L. Bociu, G. Guidoboni, R. Sacco and J. Webster. 
	Analysis of nonlinear poro-elastic and poro-visco-elastic models.
	{\it Arch. Ration. Mech. Anal.} {\bf 3} (2016), 1445 -- 1519, {\url{https://dx.doi.org/10.1007/s00205-016-1024-9}}.
	
	%
	\bibitem{BMW:23} L. Bociu, B. Muha and J. Webster. 
	Mathematical Effects of Linear Visco-elasticity in Quasi-static Biot Models.
	{\it Journal of Mathematical Analysis and Applications} {\bf 527, no. 2} (2023), 127462, {\url{https://doi.org/10.1016/j.jmaa.2023.127462}}.
	
	
	
	\bibitem{BPY:21} J.W. Both, I.S. Pop and I. Yotov. Global existence of weak solutions to unsaturated poroelasticity. {\it ESAIM: Mathematical Modelling and Numerical Analysis} {\bf 55, no. 6} (2021), 2849--2897, \url{https://doi.org/10.1051/m2an/2021063}.
	
	\bibitem{BCM:25} F. Brandt, S. \v{C}ani\'{c} and B. Muha. Three-dimensional Navier-Stokes-Biot coupling via a moving reticular plate interface: existence of weak solutions. {\it arXiv preprint arXiv:2508.14310} (2025).
	
	\bibitem{brandt2026} F. Brandt, S. {\v{C}}ani{\'c}, A. Scharf and J. Tamba{\v{c}}a. A fully averaged poroelastic Kirchhoff plate interacting with an incompressible, viscous fluid: analysis and numerical simulation. {\it arXiv preprint arXiv:2605.17496} (2026).
	
	
	
	
	\bibitem {BYZ:15}
	M. Bukac, I. Yotov and P. Zunino.
	An operator splitting approach for the interaction between a fluid and a multilayered poroelastic structure. 
	\textit{Numer. Methods PDE} \textbf{31} (2015), 1054--1100,
	\url{https://doi.org/10.1002/num.21936}.
 
	
	
	
	\bibitem{Ces:17} A. Cesmelioglu. 
	Analysis of the coupled Navier-Stokes/Biot problem.
	{\it J. Math. Anal. Appl.} {\bf 456} (2017), 970--991, {\url{https://dx.doi.org/10.1016/j.jmaa.2017.07.037}}.
	
	

	
	
	
	
	
	
\bibitem{DDHPV:04}
	R. Denk, G. Dore, M. Hieber, J. Pr{ü}ss and A. Venni. 
	New thoughts on old results of R.T. Seeley
	{\it Math. Ann.} {\bf{328}} (2004), 545--583,
	{\url{https://doi.org/10.1007/s00208-003-0493-y}}.
	
\bibitem{DHP:04}
	R. Denk, M. Hieber and J. Pr{ü}ss.
	Optimal $\rL^p$-$\rL^q$-estimates for parabolic boundary value problems with inhomogeneous data.
	{\it Math. Z.} {\bf{257}} (2007), 193--224,
	{\url{https://doi.org/10.1007/s00209-007-0120-9}}.
	
	
	\bibitem{Eng:89}
	K.-J. Engel.
	A spectral mapping theorem for polynomial operator matrices.
	{\it Differential Integral Equations} {\bf{2}} (1989), 203--215.
	
	
	

	




	
	
	
	
	
	\bibitem {Gre:87}
	G. Greiner.
	Perturbing the boundary conditions of a generator. 
	{\it Houston J. Math.} {\bf 13} (1987) , 213--229.
	
	
	
	
	\bibitem{JM:00} W. Jäger and A. Mikelic. 
	On the interface boundary condition of Beavers, Joseph and Saffman.
	{\it SIAM J. Appl. Math.} {\bf 60} (2000), 1--25, {\url{https://doi.org/10.1137/S003613999833678X}}.
	

	\bibitem{Kat:95} T. Kato. 
	Perturbation theory for linear operators, {\it Classics in Mathematics} {\bf 132}, {Springer} 1995.
	{\url{https://doi.org/10.1007/978-3-642-66282-9}}.


		
	
	
			\bibitem{KCM:24}
 J. Kuan, S. \v{C}ani\'{c} and B. Muha. Fluid-poroviscoelastic structure interaction problem with nonlinear coupling. \textit{Journal de Mathematiques Pures and Appliques}, {\bf188} (2024), 345--445, \url{https://doi.org/10.1016/j.matpur.2024.06.004}.

	
	\bibitem{KCM:25} J. Kuan, S. \v{C}ani\'{c} and B. Muha. A Regularized Interface Method for Fluid-Poroelastic Structure Interaction Problems with Nonlinear Geometric Coupling. {\it arXiv preprint arXiv:2508.18065} (2025).
	
	

	
	
	
	
	
	
	\bibitem{LM:72}
	J.-L. Lions and E. Magenes, 
	Non-homogeneous boundary value problems and applications Vol I. 1972.
	\url{https://doi.org/10.1007/978-3-642-65161-8}.
	
	
	\bibitem {MS:12}
	M. Meyries and R. Schnaubelt. 
	Interpolation, embeddings and traces of anisotropic fractional Sobolev spaces with temporal weights.
	\textit{J. Funct. Anal.} \textbf{262} (2012), 1200--1229,
	\url{https://doi.org/10.1016/j.jfa.2011.11.001}.
	
	
	\bibitem {MWW:15}
	A. Mikelic, M. Wheeler and T. Wick.
	Phase-field modeling of a fluid-driven fracture in a poroelastic medium.
	\textit{Comput. Geosci.} \textbf{19} (2015), 1171--1195,
	\url{https://dx.doi.org/10.1007/s10596-015-9532-5}.
	
	
	
	
	
	\bibitem{PS:16}
	J. Prüss and G. Simonett, 
	Moving interfaces and quasilinear parabolic evolution equations. 2016.
	\url{https://doi.org/10.1007/978-3-319-27698-4}.
	
	
	\bibitem {PSW:18}
	J. Prüss, G. Simonett and M. Wilke.
	Critical spaces for quasilinear parabolic evolution equations and applications.
	\textit{J.~Differential Equations} \textbf{264} (2018), 2028--2074,
	\url{https://doi.org/10.1016/j.jde.2017.10.010}.
	
	
	\bibitem {PW:17}
	J. Prüss and M. Wilke.
	Addendum to the paper "On quasilinear parabolic evolution equations in weighted 
	-spaces II".
	\textit{J. Evol. Equ.} \textbf{17} (2017), 1381--1388,
	\url{https://doi.org/10.1007/s00028-017-0382-6}.
	
	
	\bibitem {PW:18}
	J. Prüss and M. Wilke.
	On Critical Spaces for the Navier--Stokes Equations.
	\textit{J. Math. Fluid. Mech.} \textbf{20} (2018), 733--755,
	\url{https://doi.org/10.1007/s00021-017-0342-5}.
	
	
	\bibitem{Saf:71} 
	P. Saffman.
	On the boundary condition at the interface of a porous medium.
	{\it Stud. Appl. Math.} \textbf{1} (1971), 93--101,
	{\url{https://doi.org/10.1002/sapm197150293}}.


	\bibitem{See:72} 
	R. Seeley.
	Interpolation in {$L\sp{p}$} with boundary conditions.
	{\it Studia Math.} \textbf{44} (1972), 47--60.
	\url{https://doi.org/10.4064/sm-44-1-47-60}.
		
	
	\bibitem{Ser:62} 
	J. Serrin.
	On the interior regularity of weak solutions of the Navier-Stokes equations.
	{\it Arch. Mech. Ration. Anal.} \textbf{9} (1962), 187--195.
	\url{https://doi.org/10.1007/BF00253344}.
	

	\bibitem{Sho:00} 
	R. Showalter.
	Diffusion in Poro-elastic Media.
	{\it J. Math. Anal. Appl.} \textbf{251} (2000), 310--340.
	\url{https://dx.doi.org/10.1006/jmaa.2000.7048}.
	
	
	\bibitem{Sho:05} 
	R. Showalter.
	Poroelastic filtration coupled to Stokes flow.
	{\it Control theory of partial differential equations, Lect. Notes Pure Appl. Math.} \textbf{242} (2005), 229--241.
	
	
	\bibitem{Tri:78}
	H.~Triebel,
	{Interpolation Theory, Function Spaces, Differential Operators}. {\it North-Holland}, 1978.
	
	
\end{thebibliography}
\end{document}